\tikzset{->-/.style={decoration={
  markings,
  mark=at position #1 with {\arrow{>}}},postaction={decorate}}}
  \tikzset{middlearrow/.style={
        decoration={markings,
            mark= at position 0.55 with {\arrow{#1}} ,
        },
        postaction={decorate}
    }
}
\newcommand{\eee}[1]{\begin{equation}#1\end{equation}}
\newcommand{\sss}[1]{\begin{subequations}#1\end{subequations}}
\newcommand{\ddd}[1]{\begin{alignat}{2}#1\end{alignat}}
\newcommand{\nn}{\nonumber}
\definecolor{ddgreen}{RGB}{0,170,0}
\renewcommand{\b}{\mathcolor{blue}}
\newcommand{\no}[1]{\left\| #1 \right\|}
\def\mathcolor#1#{\@mathcolor{#1}}
\def\@mathcolor#1#2#3{%
\protect\leavevmode
\begingroup
\color#1{#2}#3%
\endgroup
}
\theoremstyle{plain}  
\newtheorem{theorem}{Theorem}[section]
\newtheorem{proposition}{Proposition}[section]
\newtheorem{lemma}{Lemma}[section]
\newtheorem{corollary}{Corollary}[section]
\theoremstyle{definition}
\newtheorem{remark}{Remark}[section]
\newenvironment{Proof}[1][\proofname]
{\proof[\textnormal{\textbf{#1.}}]}{\endproof}
\newcommand{\bp}{\begin{Proof}}
\newcommand{\ep}{\end{Proof}}
\numberwithin{figure}{section}
\numberwithin{equation}{section}
\def\l@section{\@tocline{1}{0pt}{1pc}{}{}}
\def\l@subsection{\@tocline{2}{0pt}{1pc}{4.6em}{}}
\def\l@subsubsection{\@tocline{3}{0pt}{1pc}{7.6em}{}}
\renewcommand{\tocsection}[3]{%
  \indentlabel{\@ifnotempty{#2}{\makebox[2.3em][l]{%
    \ignorespaces#1 #2.\hfill}}}#3}
\renewcommand{\tocsubsection}[3]{%
  \indentlabel{\@ifnotempty{#2}{\hspace*{2.3em}\makebox[2.3em][l]{%
    \ignorespaces#1 #2.\hfill}}}#3}
\renewcommand{\tocsubsubsection}[3]{%
  \indentlabel{\@ifnotempty{#2}{\hspace*{4.6em}\makebox[3em][l]{%
    \ignorespaces#1 #2.\hfill}}}#3}
\begin{document}
	\thanks{Bradley Isom is partially supported by a graduate research assistantship  from grant NSF-DMS 1908626. Atanas Stefanov acknowledges partial support from grant NSF-DMS 1908626. }
\title{Growth bound and nonlinear smoothing for the Periodic Derivative Nonlinear Schr\"odinger Equation}
\author{Bradley Isom$^*$, Dionyssios Mantzavinos \& Atanas Stefanov
\vskip 3mm
{\tiny Department of Mathematics, University of Kansas, Lawrence, KS 66045}}
\begin{abstract}
A polynomial-in-time growth bound is established for global Sobolev $H^s(\mathbb T)$ solutions to the derivative nonlinear Schr\"odinger equation on the circle with $s>1$. These  bounds are derived as a consequence of a nonlinear smoothing effect for an appropriate gauge-transformed version of the periodic Cauchy problem,  according to which a solution with its linear part  removed possesses higher spatial regularity than the initial datum associated with that solution. 
\end{abstract}
\date{December 17, 2020. $^*$\!\textit{Corresponding author}: bsisom@ku.edu. On behalf of all authors, the corresponding author states that there is no conflict of interest. The manuscript has no associated data.}
\keywords{derivative nonlinear Schr\"odinger equation, periodic Cauchy problem, nonlinear smoothing, polynomial-in-time bound.
}
\subjclass[2020]{Primary: 35Q55, 35B65, 42B37}

\maketitle
\markboth
{Growth Bound and Nonlinear Smoothing for the Periodic Derivative Nonlinear Schr\"odinger Equation}
{B. Isom, D. Mantzavinos \& A. Stefanov}
%
%
%

\section{Introduction and Results}

We consider the Cauchy problem for the derivative nonlinear Schr\"{o}dinger (dNLS) equation on the circle
\begin{subequations}\label{cauchy-dnls}
\ddd{
&u_t-iu_{xx}=\partial_x(|u|^2u),~~x\in\mathbb{T}, \  t\in\mathbb{R}, \label{dnls-eq}\\
&u(x,0)=u_0(x)\in H^s(\mathbb{T})\label{dnls-id},
}
\end{subequations}
where $u=u(x, t)$ is a complex-valued function, $\mathbb{T}=\mathbb R/2\pi \mathbb Z$ is the one-dimensional torus (circle), and $H^s(\mathbb{T})$ is the $L^2$-based Sobolev space on the circle. 

The dNLS equation was derived as a model in plasma physics in the 1970s, see \cite{mmot1976}. As shown in \cite{kn1978}, it is a completely integrable system, possessing a Lax pair formulation and an infinite number of conserved quantities, including the following:
\ddd{
M(u)
&=
\int_{\mathbb{T}} |u|^2~dx,
\quad
P(u)
=
\int_{\mathbb{T}} \left[\text{Im}(u\overline{u_x})+\tfrac{1}{2}|u|^4\right]dx,
\label{mass}
\\
E(u)
&=
\int_{\mathbb{T}} \left[|u_x|^2 +\tfrac{3}{2} |u|^2 \text{Im}(u\overline{u_x})+\tfrac{1}{2}|u|^6\right]dx,
\nn
}
where $M(u)$, $P(u)$ and $E(u)$ correspond to the mass, momentum and energy, respectively, of the solution. Note that $P(u)$ is the Hamiltonian for \eqref{cauchy-dnls}.

Concerning the well-posedness of \eqref{cauchy-dnls}, Fukuda and Tsutsumi \cite{ft1980} showed local well-posedness in $H^s$, $s>3/2$, on both the line and the circle using the method of parabolic regularization. Furthermore, in \cite{ft1981} they demonstrated global well-posedness of solutions in $H^2$ with sufficiently small norm $\no{u_0}_{H^1}$. Hayashi and Ozawa \cite{h1993, ho1992, ho1994} improved upon this result in the Euclidean setting by showing global well-posedness of solutions in $H^1(\mathbb{R})$ with sufficiently small norm $\no{u_0}_{L^2(\mathbb{R})}$. In particular, their result was obtained by first performing a gauge transformation of \eqref{dnls-eq}, which removed the term $|u|^2 u_x$ from the nonlinearity. Takaoka \cite{t1999} combined the gauge transformation of Hayashi and Ozawa and the Fourier restriction norm method introduced by Bourgain in the breakthrough paper \cite{b1993} to establish local well-posedness in $H^{1/2}(\mathbb{R})$. This result was shown to be sharp  by Biagoni and Linares \cite{bl2001} in the sense that the data-to-solution map fails to be uniformly continuous for $s<1/2$. Thus, $s=1/2$ is the optimal result attainable for the well-posedness of \eqref{cauchy-dnls} using a fixed point argument on the gauge equation, although the critical regularity for scaling in the Euclidean setting is at the level of $s=0$. Under the assumption of a sufficiently small $\no{u_0}_{L^2(\mathbb{R})}$ norm, Colliander, Keel, Staffilani, Takaoka and Tao \cite{ckstt2002} obtained global well-posedness for $s>1/2$. Finally, global well-posedness for $s=1/2$ was demonstrated by Miao, Wu and Xu  \cite{mwx2011} and later by Guo and Wu  \cite{gw2017}, with the latter work improving the restriction on the initial data from $\no{u_0}_{L^2(\mathbb{R})}<\sqrt{2\pi}$ to $\no{u_0}_{L^2(\mathbb{R})}<\sqrt{4\pi}$. Such mass restrictions come from the sharp Gagliardo-Nirenberg inequalities. Finally, it is worth mentioning that  Jenkins, Liu, Perry and Sulem \cite{jlps2019} and, more recently, Bahouri and Perelman \cite{bp2020} demonstrate global well-posedness of dNLS with initial data $u_0$ in the weighted Sobolev space $H^{2,2}(\mathbb{R})$ and in $H^{1/2}(\mathbb{R})$, respectively,  without a mass restriction.

The majority of the above results concern the Cauchy problem on the line. Regarding the periodic problem \eqref{cauchy-dnls}, local well-posedness in $H^{1/2}(\mathbb{T})$ was established by Herr \cite{h2006} by adapting the gauge transformation of Hayashi and Ozawa to the periodic setting. The same article gives global well-posedness for $u_0\in H^1(\mathbb{T})$ such that $\no{u_0}_{L^2(\mathbb{T})}<2/3$. This mass threshold was improved by Mosincat and Oh in \cite{mo2015}, where they show global well-posedness in $H^1(\mathbb{T})$ for $\no{u_0}_{L^2(\mathbb{T})}<\sqrt{4\pi}$. Using the $I$-method, Win \cite{w2010} obtained global well-posedness in $H^s(\mathbb{T})$ for $s>1/2$ under the  assumption of a sufficiently small $\no{u_0}_{L^2(\mathbb{T})}$ norm. Finally, Mosincat \cite{m2017} established global well-posedness in $H^{1/2}(\mathbb{T})$ provided that $\no{u_0}_{L^2(\mathbb{T})}<\sqrt{4\pi}$. At the time of writing of the present article, the mass restriction in the periodic case had not been removed.

\vskip 3mm
\noindent
\textbf{Notation.} 
In order to state the main results of this work, we  introduce the following notation. 
\vskip 1mm
\begin{enumerate}[label=$\bullet$, leftmargin=4mm, rightmargin=0mm]
\advance\itemsep 2mm
\item
For $a,b>0$, we write $a\lesssim b$ if there exists $C>0$ such that $a\leqslant C b$. If $a\lesssim b$ and $b\lesssim a$ then we write $a\sim b$. Furthermore, if $C\geq 10^6$ and $a<\frac{1}{C}\, b$ with $a\not\sim b$ then we write $a\ll b$.
\item For $f\in L^p(\mathbb{T})$, $1\leq p\leq \infty$, we define the spatial Fourier transfom of $f$, denoted by $\mathcal{F}_x(f)=\widehat{f}$, as
\eee{\label{ft-s}
\mathcal{F}_x(f)(\xi) = \widehat{f}(\xi)
:=
\frac{1}{\sqrt{2\pi}}
\int_{\mathbb{T}} e^{-i\xi x} f(x)~dx,
~~\xi\in\mathbb{Z}.
}
Furthermore, for $f\in L^2(\mathbb{T})$, we have the inversion formula
\eee{\label{inversion}
f(x)
=
\frac{1}{\sqrt{2\pi}}
\sum_{\xi\in\mathbb{Z}} e^{i\xi x} \widehat{f}(\xi).
}
For $f\in \mathcal{S}(\mathbb{R})$, the space of Schwartz functions, we define the temporal Fourier transform of $f$, denoted by $\mathcal{F}_t(f)$, as 
\eee{\label{ft-t}
\mathcal{F}_t(f)(\tau)
:=
\frac{1}{\sqrt{2\pi}}
\int_{\mathbb{R}}
e^{-it\tau} f(t)~dt,
~~\tau\in\mathbb{R}.
}
Finally, for $f\in \mathcal{S}(\mathbb{R}_t; L^p(\mathbb{T}_x))$ we denote the spatiotemporal Fourier transform of $f$ by 
\ddd{\label{ft-st}
\mathcal{F}_t\mathcal{F}_x(f)(\xi,\tau)
=
\widetilde{f}(\xi,\tau).
}
\item 
We define the Bessel potential $J_x^s$ via Fourier transform as
\sss{
\eee{
\widehat{J_x^s f}(\xi)
:=
\langle{\xi\rangle}^s\widehat{f}(\xi), \quad \langle{\cdot\rangle} := \left(1+|\cdot|^2\right)^{\frac 12}.
}
Then, for any $s\geqslant 0$ and $p\geqslant 1$, we define the Bessel potential space
\eee{
H^{s,p}(\mathbb T) := \left\{f\in L^p(\mathbb T) : \no{f}_{H^{s,p}(\mathbb T)} := \no{J_x^s f}_{L^p(\mathbb T)} <\infty \right\},
}
}
In the special case $p=2$, the above space reduces to the 
  Sobolev space $H^s(\mathbb T)$.
\item
For any $s, b  \in \mathbb R$, we define the Bourgain space $X^{s, b}$ as the closure of $\mathcal{S}(\mathbb{R}_t;C^{\infty}(\mathbb{T}_x))$ under the norm
\eee{
\no{f}_{X^{s, b}} := \no{\left\langle \xi \right\rangle^s \left\langle \tau+\xi^2\right\rangle^b \widetilde f(\xi, \tau)}_{\ell^2_{\xi}L^2_{\tau}}.
}
Similarly, the space $Y^{s,b}$ is defined via the norm
\eee{
\no{f}_{Y^{s,b}}
:=
\no{\langle{\xi\rangle}^s \langle{\tau+\xi^2\rangle}^b \widetilde{f}(\xi,\tau)}_{\ell^2_{\xi} L^1_{\tau}}.
}
In addition, we define the Banach space $Z^s:= X^{s,\frac{1}{2}}\cap Y^{s,0}$ with  norm
\eee{
\no{f}_{Z^s}
:=
\no{f}_{X^{s,\frac{1}{2}}}+\no{f}_{Y^{s,0}}.
}
Finally, the restriction of $Z^s$ on $\mathbb T \times [0, T]$ with $T>0$ is denoted by $Z^s_T$ and is defined via the norm
\eee{
\no{f}_{Z^s_T}
:=
\inf\left\{ \no{g}_{Z^s} : g|_{[0,T]} = f\right\}.
}
\item
We define the Littlewood-Paley-type projection operator $P_k$ by 
\eee{
\widehat{P_k(f)}(\xi)
:=
\begin{cases}
\chi_{\{\xi=0\}}\widehat{f}(0),~k=0 \\
\chi_{\left\{2^{k-1} \leqslant |\xi|< 2^k\right\}}\widehat{f}(\xi),~k\in\mathbb{N},
\end{cases}
}
where $\chi_A$ is the characteristic function of the set $A$.  We will often denote $P_k(f)$ simply by $f_k$. By this definition, it follows that
\eee{
\sum_{k=0}^{\infty}\widehat{f_k}(\xi)
=
\widehat{f}(\xi), \quad \xi\in\mathbb Z.}
\item We let $\eta\in C_0^{\infty}(-2,2)$ with $0\leq \eta \leq 1$ and $\eta(t)=1$ for all $t\in [-1,1]$. For $T>0$, we define $\eta_T(t) :=\eta(t/T)$.
\item Following \cite{h2006}, we introduce the periodic gauge transformation of a solution $u$ to \eqref{cauchy-dnls} by
\ddd{\label{v-gauge}
v(x,t)
=
\mathcal{G}(u)(x,t)
:=
e^{-i\mathcal{I}(u)(x,t)}u(x,t),
}
where $\mathcal{I}(u)(x,t)$ is the mean-zero spatial primitive of $|u(x,t)|^2 - \frac{1}{2\pi}\no{u(t)}_{L^2(\mathbb{T})}^2$ given by
$$
\mathcal{I}(u)(x,t)
:=
\frac{1}{2\pi}\int_0^{2\pi}\int_{\theta}^x \left[ |u(y,t)|^2-\frac{1}{2\pi}\no{u(t)}_{L^2(\mathbb{T})}^2 \right] dyd\theta.
$$
Let
\eee{
\mu:=\frac{1}{2\pi}\no{u_0}_{L^2(\mathbb{T})}^2 = \frac{1}{2\pi}\no{u(t)}^2_{L^2(\mathbb{T})}, \quad t\in \mathbb R,
}
where the second equality is due to the conservation of mass in \eqref{mass}. In fact, we further have $\mu=\frac{1}{2\pi}\no{v(t)}^2_{L^2(\mathbb{T})}$. A straightforward computation then shows that  $v$ satisfies the equation
\eee{\label{v-gauge-pde}
v_t-iv_{xx}-2\mu v_x
=
-v^2 \overline{v}_x+\frac{i}{2}|v|^4 v-i\mu|v|^2 v+i\psi(v) v,
}
where 
\eee{\nn
\psi(v)(t):=\frac{1}{2\pi}\int_0^{2\pi} \left[2 \text{Im}(\overline{v}_x v)(\theta, t)-\frac{1}{2}|v|^4(\theta, t)\right]d\theta+\mu^2.
}
The term $2\mu v_x$ can be removed from \eqref{v-gauge-pde} by means of the transformation
\eee{\label{w-gauge}
w(x,t)
=
\tau_{-\mu}v(x,t)
:=
v(x-2\mu t,t).
}
Indeed, since $\tau_{-\mu}$ commutes with $\psi$ and is an isometry on $L^2(\mathbb{T})$, we find that $w$ satisfies
\ddd{
w_t-iw_{xx}
=
-w^2 \overline{w}_x+\frac{i}{2}|w|^4 w-i\mu|w|^2 w+i\psi(w) w.
\label{w-pde}}
Finally, we introduce a second gauge transformation,
\eee{\label{z}
z(x,t)
:=
e^{-ig(t)}w(x,t),
}
where
\eee{\nn
g(t):= \frac{8\pi^3-1}{16\pi^4} \int_0^t \no{w(t')}_{L^4(\mathbb{T})}^4 dt'-\mu^2 t.
}
We note that $z$ is related to $u$ as follows:
\eee{
z(x,t)
=
e^{-ig(t)}
e^{-i\mathcal{I}(u)(x-2\mu t,t)}
u(x-2\mu t,t)
=
e^{-ig(t)} \tau_{-\mu} \mathcal{G}(u)(x,t).
\nn
}
Also, it will be shown in Section \ref{s-dbp} that  $z$ satisfies the Cauchy problem \eqref{gauge-ivp} as well as the integral equation \eqref{duhamel-z}. 
\end{enumerate}

\vskip 3mm

With the above notation  in place, we now state some essential previous results and then introduce the main results of this work. We begin with the well-posedness of the gauge-equivalent Cauchy problem \eqref{gauge-ivp}, which follows from Theorem 5.1 of \cite{h2006}.
\begin{theorem}[\b{Well-posedness of the gauge equation -- \cite{h2006}, Theorem 5.1}]
\label{herr-wp}
Suppose $z_0 \in H^s(\mathbb{T})$ with $s\geq1/2$. Then, there exists a non-increasing function $T: [0,\infty)\rightarrow [0,\infty)$ with $T=T(\no{z_0}_{H^s(\mathbb{T})})$ and a unique $z\in Z_T^s$ satisfying the gauge-equivalent Cauchy problem \eqref{gauge-ivp} in the Duhamel sense with the estimate
\ddd{\label{wp-est}
\no{z}_{Z_T^s}
&\leq
c \no{z_0}_{H^s}.
}
Furthermore, the data-to-solution map is Lipschitz from bounded subsets of $H^s(\mathbb{T})$ to bounded subsets of $Z_T^s$.
\end{theorem}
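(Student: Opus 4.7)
The plan is a standard contraction-mapping argument in the Bourgain space $Z^s_T$ applied to the Duhamel formulation \eqref{duhamel-z} of the gauge-equivalent Cauchy problem \eqref{gauge-ivp}. Specifically, I would work with the time-localized operator
\[
\Phi(z)(t) := \eta_1(t)\, e^{it\partial_x^2} z_0 - \eta_T(t) \int_0^t e^{i(t-t')\partial_x^2}\, \mathcal N(z)(t')\, dt',
\]
where $\mathcal N(z)$ collects the nonlinear and nonlocal terms on the right-hand side of the equation satisfied by $z$: the cubic derivative term $z^2 \overline{z}_x$, the quintic terms $|z|^4 z$ and $|z|^2 z$, and the coefficient-type contribution $\psi(z)z$ together with the adjustment produced by the second gauge factor $e^{-ig(t)}$ from \eqref{z}. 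The objective is to show that for $T = T(\no{z_0}_{H^s})$ sufficiently small, $\Phi$ is a strict contraction on the closed ball $\{z \in Z^s_T : \no{z}_{Z^s_T} \leq 2c\no{z_0}_{H^s}\}$; the bound \eqref{wp-est} is then immediate, and Lipschitz dependence on bounded subsets of $H^s(\mathbb T)$ follows from the analogous difference estimate.

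The argument proceeds in two main steps. First, I would record the standard linear estimates in $Z^s$, namely $\no{\eta_1 e^{it\partial_x^2} z_0}_{Z^s} \lesssim \no{z_0}_{H^s}$ together with a Duhamel bound of the form $\no{\eta_T \int_0^t e^{i(t-t')\partial_x^2} F\, dt'}_{Z^s} \lesssim \no{F}_{X^{s,-1/2} \cap Y^{s,-1}}$ valid for $0 < T \leq 1$; both are consequences of the definitions of $X^{s,b}$ and $Y^{s,b}$. Second, I would establish the trilinear estimate
\[
\big\| z_1 z_2 \overline{\partial_x z_3} \big\|_{X^{s,-1/2} \cap Y^{s,-1}} \lesssim T^{\theta} \prod_{j=1}^3 \no{z_j}_{Z^s}
\]
for some $\theta > 0$ and $s \geq 1/2$, together with its quintic analogues for $|z|^4 z$ and $|z|^2 z$, and a corresponding bound for the coefficient-type contributions $\psi(z) z$ and $g'(t) z$ (which are harmless since the coefficients are scalar-valued in time and are controlled via the embedding $Z^s \hookrightarrow C_t H^s_x$). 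These estimates then yield $\no{\Phi(z)}_{Z^s_T} \leq c \no{z_0}_{H^s} + c T^{\theta} P(\no{z}_{Z^s_T})$ for some polynomial $P$, and choosing $T = T(\no{z_0}_{H^s})$ small enough completes the proof.

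The main obstacle, and the point where the periodic problem departs sharply from its Euclidean counterpart, is the trilinear estimate at the endpoint $s = 1/2$. A Littlewood--Paley decomposition using $P_k$ followed by Plancherel reduces the estimate to a discrete convolution bound over integer triples $(\xi_1, \xi_2, \xi_3)$ subject to a constraint on the dispersive modulation $\tau + \xi^2$. Unlike on the line, no Jacobian gain is available from a change of variables, so the required smallness must be extracted from a careful counting of lattice points on the level curves of the modulation function, combined with a case analysis on high--high, high--low, and low--low interactions. The role of the auxiliary space $Y^{s,0}$ in the definition of $Z^s$ becomes decisive here: on resonant output modes where $|\tau + \xi^2|$ is small, $X^{s,-1/2}$ is critically insufficient at $s = 1/2$, so the output must be placed into $Y^{s,-1}$, which in turn requires exactly the $\ell^2_\xi L^1_\tau$ control on the inputs supplied by $Y^{s,0}$. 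The second gauge transformation \eqref{z} is introduced precisely to cancel the fully-resonant trilinear interaction that would otherwise obstruct this estimate, leaving only contributions amenable to the frequency analysis just described.
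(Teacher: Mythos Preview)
The paper does not supply its own proof of this theorem; it is quoted verbatim from Herr \cite{h2006}, Theorem~5.1, and is used here only as a black-box prerequisite for the nonlinear smoothing analysis that follows. Your sketch is a faithful outline of Herr's contraction-mapping argument in $Z^s_T$: you correctly identify the linear estimates (stated here as Lemmas~\ref{lin-id} and~\ref{inhomogeneous}), the central difficulty of the trilinear derivative estimate at the endpoint $s=1/2$ via lattice-point counting on modulation level sets, and the decisive role of the $Y^{s,0}$ component of $Z^s$ in handling near-resonant outputs where $X^{s,-1/2}$ alone fails.

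Two corrections are worth flagging. First, the contraction is run on the \emph{standard} Duhamel formulation of \eqref{gauge-ivp}, not on the differentiation-by-parts form \eqref{duhamel-z}; the latter is derived in Section~\ref{s-dbp} of this paper specifically to exhibit the smoothing gain and plays no part in Herr's well-posedness proof. Your description of the operator $\Phi$ (with $z^2\overline{z}_x$ intact) matches the standard Duhamel form, so the misattributed label is the only issue. Second, the cancellation of the fully resonant trilinear interaction is \emph{not} effected by the second gauge \eqref{z}. It is already built into Herr's gauge equation \eqref{w-pde}: as the computation at the opening of Section~\ref{s-dbp} shows, the resonant part of $-w^2\overline{w}_x$ is exactly canceled by the $\text{Im}(\overline{w}_x w)$ contribution inside $i\psi(w)w$. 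The second gauge \eqref{z} is introduced only in the present paper, solely to streamline the normal-form bookkeeping for the smoothing estimate, and is irrelevant to well-posedness.
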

\begin{remark}
In \cite{h2006}, it is stated that the local time of existence for the solution $z$ can be taken to depend only on $\no{z_0}_{H^{1/2}(\mathbb{T})}$ instead of $\no{z_0}_{H^s(\mathbb{T})}$, namely, $T=T(\no{z_0}_{H^{1/2}(\mathbb{T})})$.
\end{remark}
Next, we recall the well-posedness of the dNLS Cauchy problem \eqref{cauchy-dnls} as guaranteed by Theorem 1.1 of \cite{h2006}.
\begin{theorem}[\b{Well-posedness of  dNLS on $\mathbb T$ -- \cite{h2006}, Theorem 1.1}]
\label{herr-wp1}
Suppose $u_0\in H^s(\mathbb{T})$ with $s\geq 1/2$. If $z \in Z_T^s$ is the solution to the gauge-equivalent Cauchy problem \eqref{gauge-ivp} as guaranteed by Theorem \ref{herr-wp}, then $u=e^{ig(t)} \mathcal{G}^{-1}(\tau_{\mu} z)\in C([0,T];H^s(\mathbb{T}))$ is the unique solution satisfying the dNLS Cauchy problem \eqref{cauchy-dnls} in the sense of Duhamel. Furthermore, $u$ is a limit of smooth solutions.
\end{theorem}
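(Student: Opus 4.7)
\textit{Proof plan.} The plan is to invert the chain of three gauge transformations --- the nonlinear gauge $\mathcal G$, the Galilean shift $\tau_{-\mu}$, and the phase $e^{-ig(t)}$ --- which take $u$ to $z$, apply this inverse to the $z$ furnished by Theorem \ref{herr-wp}, and then verify that the candidate $u:=e^{ig(t)}\mathcal G^{-1}(\tau_\mu z)$ solves \eqref{cauchy-dnls}. First I would record that each gauge step preserves the spatial $L^2$ norm, so that $\mu$ is well defined from $z_0$ alone, and that the embeddings $Z^s_T\hookrightarrow C([0,T];H^s(\mathbb T))\hookrightarrow C([0,T];L^4(\mathbb T))$, valid for $s\geq 1/2$, render $g(t)$ continuous in $t$ with $g(0)=0$. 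Setting $w:=e^{ig(t)}z$ and $v:=\tau_\mu w$, the key algebraic observation is that $|v|=|z|$, so $\mathcal I$ applied to $v$ coincides with $\mathcal I$ applied to any $u$ for which $\mathcal G(u)=v$; hence $\mathcal G$ admits the explicit inverse $\mathcal G^{-1}(v)=e^{i\mathcal I(v)}v$, which commutes with the purely temporal factor $e^{ig(t)}$, so $u$ is unambiguously defined.

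Next I would verify the three assertions of the theorem in turn. \emph{Regularity} $u\in C([0,T];H^s(\mathbb T))$: combine $z\in C([0,T];H^s)$ (from the $Y^{s,0}$ component of $Z^s$), the isometry property of $\tau_\mu$ on $H^s$, the fact that $v\in H^s$ with $s\geq 1/2$ gives $\mathcal I(v)\in H^{s+1}$, and a Moser/tame product bound for $e^{i\mathcal I(v)}v$. \emph{Initial condition} $u(\cdot,0)=u_0$: use $g(0)=0$ and $\tau_\mu|_{t=0}=\mathrm{id}$ to reduce to the pointwise inversion of $z_0=e^{-i\mathcal I(u_0)}u_0$. \emph{Duhamel identity} for \eqref{dnls-eq}: start from the integral equation \eqref{duhamel-z} for $z$ and transport it back along $z\mapsto w\mapsto v\mapsto u$; the three transformations were engineered precisely so that the compensating nonlinear terms produced at each stage cancel and leave exactly $\partial_x(|u|^2u)$ in the Duhamel representation for $u$.

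For uniqueness, any other $\tilde u\in C([0,T];H^s)$ solving \eqref{cauchy-dnls} with datum $u_0$ generates, via the forward gauge chain, a $Z^s_T$ solution $\tilde z$ of the gauge-equivalent Cauchy problem with datum $z_0$; by Theorem \ref{herr-wp}, $\tilde z=z$, whence $\tilde u=u$. Finally, to realize $u$ as a limit of smooth solutions, approximate $u_0$ in $H^s$ by $u_0^{(n)}\in C^\infty(\mathbb T)$; the corresponding $z_0^{(n)}\to z_0$, and the Lipschitz continuous dependence of Theorem \ref{herr-wp}, combined with a persistence-of-regularity argument for the underlying fixed-point iteration at higher Sobolev levels, supplies smooth $z^{(n)}\to z$ in $Z^s_T$, which via continuity of the inverse gauge map produces smooth $u^{(n)}\to u$ in $C([0,T];H^s)$.

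The main obstacle I anticipate is the Duhamel verification: one must carefully account for all cross terms produced by the product rule and the time-derivative of the phase $g(t)$ and check that they collapse exactly to the nonlinearity $\partial_x(|u|^2u)$. A secondary technical point is the continuity of the inverse nonlinear gauge $v\mapsto e^{i\mathcal I(v)}v$ in $H^s$, which at the endpoint $s=1/2$ lies outside the standard algebra range and requires one to exploit the real-valuedness and uniform boundedness of $\mathcal I(v)$ in order to control the exponential.
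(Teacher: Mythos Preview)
The paper does not supply its own proof of this theorem; it is quoted verbatim as Theorem~1.1 of Herr~\cite{h2006} and invoked without argument. So there is no in-paper proof to compare your proposal against.

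Your outline is essentially the standard route (and the one Herr takes): invert the gauge chain, check regularity of $\mathcal G^{-1}$, verify the equation by undoing the algebra of Section~\ref{s-dbp}, and obtain uniqueness and smooth approximation from the corresponding properties of the $z$-problem. Two points are worth tightening. First, in the Duhamel verification you should start from the Cauchy problem \eqref{gauge-ivp} (or its direct Duhamel form), not from \eqref{duhamel-z}: the latter is the normal-form-reduced equation produced by the differentiation-by-parts of Section~\ref{s-dbp} and is tailored to the smoothing estimate, whereas the passage $z\leftrightarrow w\leftrightarrow v\leftrightarrow u$ is the chain of elementary manipulations relating \eqref{gauge-eq}, \eqref{w-pde}, \eqref{v-gauge-pde} and \eqref{dnls-eq}. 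Second, your uniqueness step has a gap: from $\tilde u\in C([0,T];H^s)$ one cannot conclude that the forward gauge transform $\tilde z$ lies in $Z^s_T$, since the $Z^s$ norm contains an $X^{s,1/2}$ component not controlled by $C_tH^s_x$. The uniqueness asserted here (as in \cite{h2006}) is uniqueness in the class of solutions whose gauge image belongs to $Z^s_T$---equivalently, among limits of smooth solutions---not unconditional uniqueness in $C([0,T];H^s)$.
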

\begin{remark}
Lemma \ref{u-z} implies that there exists a non-increasing function $\tilde{T} : [0,\infty)\rightarrow [0,\infty)$ such that $\tilde{T}=\tilde{T}(\no{u_0}_{H^s(\mathbb{T})})$ and $\tilde{T}(\no{u_0}_{H^s(\mathbb{T})})\leq T(\no{z_0}_{H^s(\mathbb{T})})$. Therefore, for $s\geq 1/2$ the time of existence in Theorem \ref{herr-wp1} may be taken to depend on $\no{u_0}_{H^{1/2}(\mathbb{T})}$ instead of $\no{u_0}_{H^s(\mathbb{T})}$.
\end{remark}
Finally, we state the following   global existence result from \cite{h2006}.
\begin{corollary}[\b{Global existence -- \cite{h2006}, Corollary 1.2}]\label{global}
For $s\geq 1$ let $u\in C([0,T]; H^s(\mathbb{T}))$ be the solution to the Cauchy problem \eqref{cauchy-dnls} from Theorem \ref{herr-wp1}. Then, for $\no{u_0}_{L^2(\mathbb{T})}$ sufficiently small, 
\ddd{\label{h1-est}
\no{u(t)}_{H^1(\mathbb{T})}
&\leq
C(\no{u_0}_{H^1(\mathbb{T})}), \quad t\in [0, T].
}
Consequently, the time of existence for the solution $u$ can be taken arbitrarily large.
\end{corollary}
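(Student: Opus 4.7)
The plan is to exploit the three conservation laws $M(u)$, $P(u)$, $E(u)$ recorded in \eqref{mass}, combined with the sharp one-dimensional Gagliardo-Nirenberg interpolation, to get a uniform-in-time control of $\no{u(t)}_{H^1(\mathbb T)}$ under a smallness assumption on $\no{u_0}_{L^2(\mathbb T)}$. Once that bound is in place, I would iterate Theorem \ref{herr-wp1} on successive time intervals to reach arbitrarily large times.

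More concretely, first I would note that $u$ is obtained as a limit of smooth solutions (Theorem \ref{herr-wp1}), so that the conservation identities $M(u(t))=M(u_0)$ and $E(u(t))=E(u_0)$ are justified for $t\in[0,T]$. In particular, since $\text{Im}(u\overline{u_x})$ is real-valued and integrated against $|u|^2$, Cauchy-Schwarz together with the one-dimensional Sobolev embedding gives
\eee{\nn
\left|\int_{\mathbb T}|u|^2\,\text{Im}(u\overline{u_x})\,dx\right|
\leq \no{u}_{L^6}^3 \no{u_x}_{L^2}, \qquad
\int_{\mathbb T}|u|^6\,dx = \no{u}_{L^6}^6.
}
Next, I would invoke the Gagliardo-Nirenberg inequality on $\mathbb T$ in the form
$\no{u}_{L^6}\lesssim \no{u}_{L^2}^{2/3}\no{u_x}_{L^2}^{1/3}+\no{u}_{L^2}$, to obtain
$\no{u}_{L^6}^3\lesssim \no{u}_{L^2}^{2}\no{u_x}_{L^2}+\no{u}_{L^2}^{3}$
and
$\no{u}_{L^6}^6\lesssim \no{u}_{L^2}^{4}\no{u_x}_{L^2}^{2}+\no{u}_{L^2}^{6}$.

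Plugging these into $E(u)$ and using $\no{u(t)}_{L^2}=\no{u_0}_{L^2}$, I would arrive at an inequality of the schematic form
\eee{\nn
\no{u_x(t)}_{L^2}^2 \leq E(u_0)+ C_1\no{u_0}_{L^2}^{2}\no{u_x(t)}_{L^2}^2+C_2\no{u_0}_{L^2}^{4}\no{u_x(t)}_{L^2}^{2}+C_3\no{u_0}_{L^2}^{q}
}
for absolute constants $C_1,C_2,C_3>0$ and some exponent $q$. Choosing $\no{u_0}_{L^2}$ sufficiently small so that $C_1\no{u_0}_{L^2}^{2}+C_2\no{u_0}_{L^2}^{4}\leq 1/2$, the derivative term on the right can be absorbed into the left. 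Combined with conservation of mass, this yields \eqref{h1-est} with $C(\no{u_0}_{H^1(\mathbb T)})$ depending only on the initial data.

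Finally, for the global extension, I would use the remark following Theorem \ref{herr-wp1}, according to which the local existence time in the $H^s$ theory depends only on $\no{u_0}_{H^{1/2}(\mathbb T)}$, which is controlled by $\no{u_0}_{H^1(\mathbb T)}$. Since \eqref{h1-est} furnishes a uniform $H^1$ bound, the step size $\tilde{T}(\no{u(t)}_{H^{1/2}})$ stays bounded below by a fixed positive constant throughout the lifespan. Iterating Theorem \ref{herr-wp1} on intervals of this length therefore extends the solution past any prescribed time horizon. The main obstacle I anticipate is tracking the precise smallness threshold: the cubic term in $E(u)$ is sign-indefinite and couples $u$ with $u_x$, so the absorption argument is the delicate part and forces the restriction on $\no{u_0}_{L^2(\mathbb T)}$; the quintic term $\frac{1}{2}|u|^6$ is positive and in fact helps stabilize the energy, but still needs to be interpolated carefully via Gagliardo-Nirenberg to avoid losing the $H^1$ control.
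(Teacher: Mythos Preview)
Your proposal is correct and follows the standard route; the paper itself does not prove this corollary but simply quotes it from \cite{h2006}, with the accompanying remark noting only that the local existence time is bounded below by a function of $\no{u_0}_{H^1(\mathbb T)}$, which is exactly your iteration step. The energy-plus-Gagliardo--Nirenberg absorption argument you outline is precisely the mechanism behind the $H^1$ bound in \cite{h2006}, so there is nothing to compare.
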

\begin{remark}
The above global result follows from the observation that the local time of existence $T$ is bounded below by a function of $\no{u_0}_{H^1(\mathbb{T})}$. While global well-posedness in $H^{1/2}(\mathbb{T})$ has been obtained in \cite{m2017}, an estimate of the form \eqref{h1-est} does not seem to hold with $H^1(\mathbb{T})$ replaced by $H^{1/2}(\mathbb{T})$.
\end{remark}
\indent 
The main goal of this paper is to establish a polynomial-in-time bound on the growth of global solutions to the periodic dNLS Cauchy problem \eqref{cauchy-dnls}. Key to demonstrating this bound is the discovery of a local nonlinear smoothing effect for  the gauge problem \eqref{gauge-ivp}, according to which the solution $z$ of \eqref{gauge-ivp} with the linear part removed possesses higher spatial regularity than the initial data $z_0$. The effect is more readily seen by first recasting \eqref{gauge-ivp} into the Duhamel form \eqref{duhamel-z} via  a method known as differentiation by parts  (see Section \ref{s-dbp}). The precise statement of this first result is as follows.
\begin{theorem}[\b{Nonlinear smoothing}]
\label{dbp-wp}
Suppose $s>1/2+\varepsilon$ with $0<\varepsilon\ll 1/2$ and let $z_0\in H^s(\mathbb{T})$. Then, for $T=T(\no{z_0}_{H^{1/2+\varepsilon}(\mathbb{T})})$, the solution $z\in Z^s_T$  of  the Cauchy problem \eqref{gauge-ivp} from Theorem \ref{herr-wp}  satisfies the integral equation \eqref{duhamel-z}. 

Moreover,   for $0<a<\min\{s-1/2-\varepsilon, 1/2-\varepsilon\}$ and $\sigma=\min\{s,1\}$ we have $z-e^{it\partial_x^2}z_0 \in C([0,T];H^{s+a}(\mathbb{T}))$ with
\eee{\label{smoothing-est}
\big\| z-e^{it\partial_x^2}z_0\big\|_{C([0,T];H^{s+a}(\mathbb{T}))}
\leq
C(s, \no{z_0}_{H^{\sigma}(\mathbb{T})}, T)\no{z_0}_{H^s(\mathbb{T})}.
}
\end{theorem}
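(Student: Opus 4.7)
The plan is to exploit the integral representation \eqref{duhamel-z} produced by the differentiation-by-parts procedure of Section \ref{s-dbp}. First I would verify that the unique $Z^s_T$ solution $z$ of \eqref{gauge-ivp} furnished by Theorem \ref{herr-wp} actually satisfies \eqref{duhamel-z}: since Theorem \ref{herr-wp1} realizes $z$ as a limit of smooth solutions in $Z^s_T$, it suffices to check the identity for smooth $z$, where it amounts to a Fourier-side integration by parts in $t$ in the standard Duhamel formula for \eqref{gauge-ivp}. The Lipschitz data-to-solution map of Theorem \ref{herr-wp} then passes to the limit, provided each multilinear term in \eqref{duhamel-z} is locally Lipschitz in $Z^s_T$, a byproduct of the smoothing estimates themselves.

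With \eqref{duhamel-z} in hand, the smoothing bound \eqref{smoothing-est} reduces to controlling every term on the right-hand side, other than $e^{it\partial_x^2} z_0$, in the stronger norm $Z^{s+a}_T$, since $Z^{s+a}_T \hookrightarrow C([0,T];H^{s+a}(\mathbb T))$ and Theorem \ref{herr-wp} already furnishes the a priori bound $\no{z}_{Z^s_T} \leq c\no{z_0}_{H^s}$. After differentiation by parts, the nonlinearity of \eqref{gauge-ivp} -- built from $v^2\overline v_x$, $|v|^4 v$, $|v|^2 v$, and $\psi(v) v$ -- decomposes into: (i) boundary terms at $t=0$ and the current time, quartic in $z$ and $z_0$, each carrying a resonance factor $1/\Phi$ with $\Phi(\xi_1,\xi_2,\xi_3,\xi)=\xi^2+\xi_2^2-\xi_1^2-\xi_3^2$; (ii) cubic resonant terms where $\Phi=0$; and (iii) higher-multiplicity terms (quintic and beyond) generated when $\partial_t$ is applied to the internal factors of $z$ after integration by parts.

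The technical heart of the proof is a suite of multilinear estimates of the schematic form
\eee{\nn
\no{\mathcal{N}_k[z,\ldots,z]}_{Z^{s+a}_T} \leq C(\no{z_0}_{H^\sigma},T)\,T^{\theta}\,\no{z}_{Z^s_T}^{k},
}
proved by Littlewood--Paley decomposition together with case analysis on the size of $|\Phi|$. In the non-resonant regime one has the bilinear factorization $|\Phi|\gtrsim |\xi-\xi_1|\,|\xi-\xi_3|$, so division by $\Phi$ trades the spatial derivative in $v^2\overline v_x$ for a gain of up to one derivative, comfortably accommodating any $a<1/2-\varepsilon$. The resonant set $\{\Phi=0\}=\{\xi_1=\xi\}\cup\{\xi_3=\xi\}$ is one-dimensional on the lattice, and the resulting cubic contribution collapses to $|z|^2 z$, which is controlled in $H^{s+a}$ by the algebra property of $H^s(\mathbb T)$ for $s>1/2$. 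The higher-multiplicity pieces carry no spatial derivatives and are handled by iterated Sobolev embedding together with the $\ell^2_\xi L^1_\tau$ component $Y^{s+a,0}$ of $Z^{s+a}_T$, which simultaneously supplies the time continuity in $H^{s+a}$ and provides the Hölder-in-$t$ factor $T^{\theta}$.

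The main obstacle I anticipate is the sharpness of the cubic non-resonant estimate on the torus: because $\Phi$ only lower-bounds the product $|\xi-\xi_1|\,|\xi-\xi_3|$ rather than $|\xi_{\max}|^2$, the smoothing from $1/\Phi$ degrades when one of these differences is $O(1)$. Closing the estimate in this near-resonant strip requires exploiting the summability afforded by $s>1/2+\varepsilon$ on the low-frequency factor via $H^{1/2+\varepsilon}(\mathbb T)\hookrightarrow L^\infty(\mathbb T)$, and it is precisely this bookkeeping that fixes the constraints $a<s-1/2-\varepsilon$ and $a<1/2-\varepsilon$. A secondary subtlety is that a bound in $X^{s+a,1/2}$ alone would not deliver endpoint continuity; tracking the $Y^{s+a,0}$ component of $Z^{s+a}_T$ throughout the iteration is therefore essential and motivates the appearance of the time-localization gain $T^{\theta}$.
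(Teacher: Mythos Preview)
Your global architecture matches the paper: justify \eqref{duhamel-z} for smooth data and pass to the limit via the Lipschitz data-to-solution map, then estimate every term other than $e^{it\partial_x^2}z_0$ in $Z^{s+a}_T$ and use $Z^{s+a}_T\hookrightarrow C([0,T];H^{s+a})$ together with $\no{z}_{Z^s_T}\lesssim\no{z_0}_{H^s}$.

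The gap is in your handling of the resonant set $\{\Phi=0\}$. You describe the nonlinearity of \eqref{gauge-ivp} as ``built from $v^2\overline v_x$, $|v|^4v$, $|v|^2v$, $\psi(v)v$'' and then propose to treat the cubic resonant piece via the algebra property of $H^s(\mathbb T)$. Two problems. First, the resonant contribution of $-z^2\overline z_x$ on $\{\xi_1=\xi_2\}\cup\{\xi_2=\xi_3\}$ is not $|z|^2z$ but $c(t)z$ with $c(t)=\frac{i}{\pi}\sum_\xi \xi|\widehat z(\xi)|^2$, and neither expression can be placed in $H^{s+a}$ using only $\no{z}_{H^s}$: the algebra inequality $\no{|z|^2z}_{H^s}\lesssim\no{z}_{H^s}^3$ produces no gain of derivatives, so this step would fail outright. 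Second, and more to the point, equation \eqref{gauge-ivp} already has these resonances \emph{removed}---that is precisely the role of the two gauges $\mathcal G$ and $e^{-ig(t)}$ in Section~\ref{s-dbp}, which arrange exact cancellation of the resonant parts of $-w^2\overline w_x$ and $\tfrac{i}{2}|w|^4w$ against $\psi(w)w$ and the mass terms. The $z$-equation therefore contains only $NR(-z^2\overline z_x)$, $\mathcal A(z)$, $NR(|z|^2z)$, and $\no{z}_{L^4}^4 z$, and there is no resonant cubic piece left to estimate.

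Accordingly, differentiation by parts in the paper is not applied globally but only to the single bad interaction $\{|\xi_1|\ll|\xi_2|\}$ inside $NR(-z^2\overline z_x)$, where the full gain $|\Psi|^{-1}\sim|\xi|^{-2}$ is available; the remaining frequency regimes $\mathcal B_1,\mathcal B_2,\mathcal B_3$ are estimated directly in $X^{s+a,-\frac12+\delta}$ via the $L^4$/$L^6$ Strichartz bounds (Lemma~\ref{non-dbp-terms}), and it is \emph{that} lemma---not the near-resonant strip you anticipate---which produces the constraint $a<\min\{s-\tfrac12-\varepsilon,\tfrac12-\varepsilon\}$. Two minor corrections: the boundary term $NF(z)$ is trilinear, not quartic; and the paper closes not with a $T^\theta$ smallness factor but with a finite bootstrap from $Z^{1/2+\varepsilon}_T$ up to $Z^s_T$ using the smoothing estimate itself.
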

\begin{remark}
Corollary \ref{global} and Lemma \ref{u-z} imply that $z$ satisfies \eqref{duhamel-z} globally.
\end{remark}
We note that the dispersion on the circle is weaker than on the line in the sense that no Kato smoothing or maximal inequalities are available on the circle. Thus, proving \eqref{smoothing-est} requires a careful treatment of resonant frequencies in addition to the differentiation by parts mentioned above.

The nonlinear smoothing estimate \eqref{smoothing-est} allows us to demonstrate the following polynomial-in-time bound, which is the main result of this work.
\begin{theorem}[\b{Polynomial bound}]
\label{polynomial-bound}
Let $s\geq 1$. Then, the global solution $u$ to the periodic dNLS Cauchy problem \eqref{cauchy-dnls} given by Corollary \ref{global} satisfies
\eee{\label{poly-bound}
\no{u(t)}_{H^s(\mathbb{T})}
\leq
C(\varepsilon, s, \no{u_0}_{H^s(\mathbb{T})}, T) \
\langle{t\rangle}^{2(s-1)+\varepsilon},
}
for all $t\in\mathbb{R}$ and $0<\varepsilon\ll 1/2$.
\end{theorem}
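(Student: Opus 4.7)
The plan is to transfer the estimate from $u$ to the gauged solution $z$, for which the nonlinear smoothing of Theorem~\ref{dbp-wp} is directly available, and then bootstrap on the regularity parameter. Because the gauge transformations $\mathcal{G}$, $\tau_{-\mu}$, and $e^{\pm i g(t)}$ preserve the $H^s(\mathbb{T})$ norm up to multiplicative factors depending only on $\no{u(t)}_{H^1(\mathbb{T})}$, which is globally bounded by Corollary~\ref{global}, we have $\no{u(t)}_{H^s(\mathbb{T})} \sim \no{z(t)}_{H^s(\mathbb{T})}$ uniformly in $t$ (cf.~Lemma~\ref{u-z}). Hence it suffices to bound $\no{z(t)}_{H^s(\mathbb{T})}$. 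The uniform $H^1$ bound also allows us to fix the local existence time $T>0$ from Theorem~\ref{herr-wp} independently of the initial time, so the analysis can be iterated on consecutive intervals $[kT,(k+1)T]$ with $k \in \mathbb{N}$.

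Fix the prescribed $\varepsilon > 0$, pick a small auxiliary $\varepsilon'>0$ and a step size $a \in (0, 1/2 - \varepsilon')$ subject to the hypotheses of Theorem~\ref{dbp-wp}, and define $B_k := z((k+1)T) - e^{iT\partial_x^2} z(kT)$. The smoothing inequality \eqref{smoothing-est} gives $\no{B_k}_{H^{s+a}(\mathbb{T})} \leq C\no{z(kT)}_{H^s(\mathbb{T})}$ with $C$ depending only on $\no{z(kT)}_{H^1(\mathbb{T})}$ and $T$, both uniformly bounded. Since $e^{iT\partial_x^2}$ is an $H^s$-isometry,
$$
\no{z((k+1)T)}_{H^s}^2 - \no{z(kT)}_{H^s}^2 = 2\,\text{Re}\,\langle e^{iT\partial_x^2} z(kT),\, B_k\rangle_{H^s} + \no{B_k}_{H^s}^2,
$$
and I would bound the cross term via the duality $|\langle f,g\rangle_{H^s}| \leq \no{f}_{H^{s-a}}\no{g}_{H^{s+a}}$ and the quadratic term via the interpolation $\no{B_k}_{H^s}^2 \leq \no{B_k}_{H^{s-a}}\no{B_k}_{H^{s+a}}$ followed by the triangle inequality. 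Combined with the smoothing estimate, these give the stepwise bound
$$
\no{z((k+1)T)}_{H^s}^2 - \no{z(kT)}_{H^s}^2 \leq C\big(\no{z(kT)}_{H^{s-a}} + \no{z((k+1)T)}_{H^{s-a}}\big)\no{z(kT)}_{H^s}.
$$

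The last step is a bootstrap on the regularity parameter. Assuming inductively that $\no{z(t)}_{H^{s-a}(\mathbb{T})} \leq C\langle t\rangle^{\beta}$ for some $\beta \geq 0$, discretely summing the stepwise inequality over $k=0,\ldots,\lfloor t/T\rfloor-1$ yields $\no{z(t)}_{H^s(\mathbb{T})} \leq C\langle t\rangle^{\beta+1}$. Starting from the base case $\beta = 0$ at regularity $1$ (Corollary~\ref{global}) and iterating this relation in steps of width $a$ close to $1/2$, with $a$ and $\varepsilon'$ calibrated to the target $\varepsilon$, produces the exponent $2(s-1)+\varepsilon$ after finitely many iterations. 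Converting back to $u$ then gives the claimed polynomial bound.

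The main obstacle will be the calibration of the bootstrap: selecting $a$ and $\varepsilon'$ across the chain of intermediate regularities so that the final exponent is exactly the advertised $2(s-1)+\varepsilon$ rather than a slightly larger naive ceiling $\lceil (s-1)/a\rceil$. A secondary technical point is verifying that the smoothing constraint $a < \min\{s - 1/2 - \varepsilon',\, 1/2 - \varepsilon'\}$ persists at every intermediate level of the descending ladder of regularities from $s$ down to $1$, with the implicit constants remaining uniformly controlled by the conserved $H^1$ norm throughout.
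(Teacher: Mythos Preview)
Your overall strategy is sound and recognizable as the classical Bourgain--Staffilani scheme (cf.\ the discussion around \eqref{eq:bour} in the introduction): expand $\no{z((k+1)T)}_{H^s}^2 - \no{z(kT)}_{H^s}^2$, feed the smoothing estimate into the cross and quadratic terms, and iterate in the regularity parameter. The stepwise inequality you derive is correct, and the reduction from $u$ to $z$ via Lemma~\ref{u-z} matches the paper.

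The paper, however, takes a genuinely different route for the core iteration. Rather than bootstrapping through a ladder of intermediate regularities, it fixes $t\in[nT,(n+1)T]$ and splits $z(t)=Q_{\le n^2}z(t)+Q_{>n^2}z(t)$ by frequency. The low-frequency piece is estimated trivially by $\langle n\rangle^{2(s-1)}\no{z(t)}_{H^1}$. For the high-frequency piece one writes $Q_{>n^2}z(t)$ as $Q_{>n^2}\big(z(t)-e^{i(t-nT)\partial_x^2}z(nT)\big)+Q_{>n^2}e^{i(t-nT)\partial_x^2}z(nT)$, uses the smoothing \eqref{smoothing-est} (applied with data in $H^1$ and output in $H^{3/2-\varepsilon}$) together with the high-frequency cutoff to gain $\langle n\rangle^{2(s-(3/2-\varepsilon))}$ on the first term, and peels off one time step on the second via $\no{Q_{>n^2}e^{i(t-nT)\partial_x^2}z(nT)}_{H^s}\le\no{Q_{>(n-1)^2}z(nT)}_{H^s}$. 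Iterating yields a sum $\sum_{k=1}^n\langle k\rangle^{2(s-(3/2-\varepsilon))}\lesssim\langle n\rangle^{2(s-1)+2\varepsilon}$, and the two pieces balance to give the exponent $2(s-1)+\varepsilon$ directly, with no integer rounding. The bootstrap in $s$ (to $[3/2-\varepsilon,2-2\varepsilon]$, etc.) is then used only to extend the range of $s$, not to build up the exponent.

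The calibration obstacle you flag is real for your method as written: the ladder produces an integer exponent $m=\lceil (s-1)/a\rceil$, which for generic $s$ exceeds $2(s-1)+\varepsilon$. It is resolvable---either interpolate at the end between the bounds at regularities $1+(m-1)a$ and $1+ma$, or bypass the ladder altogether by inserting $\no{z(kT)}_{H^{s-a}}\le\no{z(kT)}_{H^1}^{a/(s-1)}\no{z(kT)}_{H^s}^{1-a/(s-1)}$ directly into your stepwise inequality to obtain $A_{k+1}-A_k\le CA_k^{1-a/(s-1)}$, which sums to $\langle t\rangle^{(s-1)/a}$---but the paper's frequency-truncation argument sidesteps the issue entirely.
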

Bourgain \cite{b1996, b1997} was the first to demonstrate the connection between nonlinear smoothing and polynomial bounds for Hamiltonian equations. By employing Fourier truncation operators in conjunction with smoothing estimates, he obtained the following local-in-time inequality for solutions of various dispersive equations:
\begin{equation}\label{eq:bour}
\no{ u(t + \delta) }_{H^s} \leqslant \no{u(t)}_{H^s} + C\no{u(t)}_{H^s}^{1-\delta}
\end{equation}
 for some $\delta \in (0,1)$.  Local time iterations using the above inequality resulted in the polynomial growth bound $\no{u(t)}_{H^s} \lesssim \left\langle t \right\rangle^{1/\delta}$.  Staffilani \cite{s1997b, s1997a} used further multilinear smoothing estimates to obtain \eqref{eq:bour}, which led to polynomial bounds of $H^s$ solutions, $s>1$, for Korteweg-de Vries (KdV) and nonlinear Schr\"odinger (NLS) equations.  Colliander, Keel, Staffilani, Takaoka and Tao \cite{ckstt2003} developed a new method using modified energy called the ``upside-down $I$-method'' to produce polynomial bounds in low Sobolev norms, $s\in (0,1)$, for the NLS equation.  Sohinger \cite{s2011a, s2011b} further developed the upside-down $I$-method to obtain polynomial bounds for high Sobolev norms for NLS.  We also refer the reader to \cite{cko2012} and the references therein for further developments in this direction. In addition, Oh and Stefanov \cite{os2020} determined a nonlinear smoothing effect for periodic, generalized KdV equations that gave rise to a polynomial bound in $H^s$, $s>1$. Finally, in \cite{imos2020}, Oh and the authors of the present work identified a nonlinear smoothing effect for a periodic, gauge-transformed Benjamin-Ono equation which led to a polynomial bound on solutions to the periodic Benjamin-Ono equation for $1/2<s\leq 1$. 

Several recent works have established uniform-in-time bounds for a number of completely integrable dispersive equations using inverse scattering techniques. In particular, Killip, Visan and Zhang \cite{kvz2018} showed that the $H^s$-norm of solutions to the KdV and NLS equations is uniformly bounded in time for $-1 \leqslant s < 1$ and $-1/2 < s < 1$, respectively, both on the line and on the circle. Similarly, Koch and Tataru \cite{kt2018} showed that there exists a conserved energy equivalent to the $H^s$-norm for $s>-1/2$ in the case of the NLS and mKdV equations and for $s\geqslant -1$ in the case of the KdV equation. For the Benjamin-Ono equation,  Talbut \cite{t2019} proved a uniform-in-time bound in $H^s$ for $-1/2< s < 0$ on the line and the circle. G\'erard, Kappeler and Topalov \cite{gkt2020} then established this uniform bound for the periodic Benjamin-Ono equation with $s>0$. In the case of dNLS, uniform-in-time bounds were obtained on the line and the circle by Klaus and Schippa \cite{ks2020} for $0<s<1/2$. Furthermore, Bahouri and Perelman \cite{bp2020} showed boundedness of $H^{1/2}(\mathbb{R})$ solutions.

Nonlinear smoothing properties analogous to the one of Theorem \ref{dbp-wp} have been previously established for several important dispersive equations. Indicatively, we mention the works of Erdogan and Tzirakis on the periodic KdV equation \cite{et2013a}  as well as  on the fractional NLS equation on the circle and line \cite{egt2019}, the NLS equation on the half-line \cite{et2016}, the dNLS equation on the line and half-line \cite{egt2018}, and the Zakharov system on the circle \cite{et2013b}.
The main technique used in the proof of these results is known as the normal form method or, as previously mentioned, the differentiation by parts method. It was first introduced by Shatah \cite{s1985} in the context of the Klein-Gordon equation with a quadratic nonlinearity and was further developed  by Germain, Masmoudi and Shatah  for two-dimensional quadratic Schr\"odinger equations \cite{gms2009a} and for the gravity water waves equation \cite{gms2009b}. Babin, Ilyin and Titi \cite{bit2011} applied this method to obtain unconditional well-posedness results for the periodic KdV equation. Chung, Guo, Kwon and Oh \cite{cgko2017} also obtained unconditional well-posedness of the quadratic dNLS using normal form reductions. An alternative formulation of the normal form method, which involves a multilinear, pseudo-differential operator in place of differentiation by parts, was provided by Oh and Stefanov \cite{os2012, o2013} for establishing smoothing estimates and well-posedness.
\vskip 3mm
\noindent
\textbf{Structure of the paper}. In Section \ref{s-dbp}, we employ the gauge transformation \eqref{z}  to remove the resonant frequencies from the $w$-equation \eqref{w-pde}   and perform differentiation by parts on the resulting gauge problem \eqref{gauge-ivp} for $z$ in order to  establish (formally) the Duhamel form \eqref{duhamel-z}. In Section~\ref{s-apriori}, we prove a number of useful a priori estimates for  \eqref{duhamel-z} which are key to establishing Theorem~\ref{dbp-wp}. In Section~\ref{proof1}, we utilize the aforementioned estimates to complete the proof of Theorem~\ref{dbp-wp}. Finally, in Section~\ref{proof2}, we employ the nonlinear smoothing effect from Theorem~\ref{dbp-wp} in order to prove the polynomial bound of Theorem~\ref{polynomial-bound}.
\section{Removal of Resonances and Differentiation by Parts}\label{s-dbp}

We begin with equation \eqref{w-pde} for $w$ and proceed with formal computations. First, note that
\eee{\label{first-nonlin}
-w^2 \overline{w}_x
=
\frac{i}{(2\pi)^{3/2}}\sum_{\xi_1,\xi_2,\xi_3} e^{i(\xi_1-\xi_2+\xi_3)x} \xi_2 \widehat{w}(\xi_1)\overline{\widehat{w}(\xi_2)}\widehat{w}(\xi_3).
}
The resonant frequencies in \eqref{first-nonlin} are associated with $\{\xi_1=\xi_2\}\cup\{\xi_2=\xi_3\}$. Hence, we split \eqref{first-nonlin} into resonant and nonresonant frequencies as follows:
\ddd{
-w^2 \overline{w}_x
&=
NR(-w^2\overline{w}_x)
+
\frac{i}{(2\pi)^{3/2}}\sum_{\xi_1=\xi_2} e^{i(\xi_1+\xi_2-\xi_3)x} \xi_2\widehat{w}(\xi_1)\overline{\widehat{w}(\xi_2)}\widehat{w}(\xi_3)
\nn\\
&\hspace{2.7cm}+\frac{i}{(2\pi)^{3/2}}\sum_{\xi_2=\xi_3} e^{i(\xi_1+\xi_2-\xi_3)x}\xi_2 \widehat{w}(\xi_1)\overline{\widehat{w}(\xi_2)}\widehat{w}(\xi_3)
\nn\\
&=
NR(-w^2\overline{w}_x)+\frac{2iw}{2\pi}\sum_{\xi} \xi |\widehat{w}(\xi)|^2,
\nn}
where we have used the symmetry in $\xi_1$ and $\xi_3$ and 
\eee{\label{NR-def-0}
NR(-w^2\overline{w_x})
=
\frac{i}{(2\pi)^{3/2}}
\sum_{\xi_1\neq \xi_2\neq \xi_3}
e^{i(\xi_1-\xi_2+\xi_3)x} \xi_2 \widehat{w}(\xi_1)\overline{\widehat{w}(\xi_2)}\widehat{w}(\xi_3).
} 
Next, we claim that
$$
\sum_{\xi} \xi |\widehat{w}(\xi)|^2
=
-\int_0^{2\pi}  \text{Im}(\overline{w}_x w)(\theta, t)~d\theta.
$$
Indeed, we have
\ddd{
-\int_0^{2\pi}  \text{Im}(\overline{w}_x w)(\theta, t)~d\theta
&=
\text{Im}\int_0^{2\pi}\frac{i}{2\pi}\sum_{\xi_1,\xi_2} e^{i(\xi_1-\xi_2)\theta}\xi_2\widehat{w}(\xi_1)\overline{\widehat{w}}(\xi_2)~d\theta
\nn\\
&=
\text{Im}\sum_{\xi_1,\xi_2}\int_0^{2\pi}\frac{i}{2\pi} e^{i(\xi_1-\xi_2)\theta}\xi_2\widehat{w}(\xi_1)\overline{\widehat{w}}(\xi_2)~d\theta
\nn\\
&=
\text{Im}\sum_{\xi_1=\xi_2} i\xi_2\widehat{w}(\xi_1)\overline{\widehat{w}}(\xi_2)
=
\text{Im}\sum_{\xi} i\xi|\widehat{w}(\xi)|^2
=
\sum_{\xi}\xi|\widehat{w}(\xi)|^2.
\nn
}
Therefore, the resonant term associated with $-w^2\overline{w}_x$ cancels out and  equation  \eqref{w-pde}  for $w$ becomes
\eee{\label{w-pde2}
w_t-iw_{xx}
=
NR(-w^2\overline{w}_x)
+\frac{i}{2}|w|^4 w-i\mu|w|^2 w
+iw\left(-\frac{1}{4\pi}\int_0^{2\pi} |w|^4(\theta,t)~d\theta+\mu^2\right).
}

We now examine the quintic term
\ddd{
\frac{i}{2}|w|^4 w
=
\frac{i}{2}w\overline{w}w\overline{w}w
=
\frac{i}{2(2\pi)^{5/2}}\sum_{\xi_1,...,\xi_5} e^{i(\xi_1-\xi_2+\xi_3-\xi_4+\xi_5)x}  \widehat{w}(\xi_1)\overline{\widehat{w}(\xi_2)} \widehat{w}(\xi_3)\overline{\widehat{w}(\xi_4)}\widehat{w}(\xi_5).
\nn}
Here, the undesirable frequencies we wish to isolate are $\{\xi_1-\xi_2+\xi_3-\xi_4=0\}\cup\{-\xi_2+\xi_3-\xi_4+\xi_5=0\}\cup\{\xi_1-\xi_2-\xi_4+\xi_5=0\}$. We note that these sets are not necessarily resonant. Thus, we will rewrite the above quintic term as 
\ddd{
\frac{i}{2}|w|^4 w
&=
\frac{i}{2(2\pi)^{5/2}}\sum_{\xi_1,...,\xi_5} e^{i(\xi_1-\xi_2+\xi_3-\xi_4+\xi_5)x} \widehat{w}(\xi_1)\overline{\widehat{w}(\xi_2)} \widehat{w}(\xi_3)\overline{\widehat{w}(\xi_4)}\widehat{w}(\xi_5)
\nn\\
&:=
\mathcal{A}(w)+\frac{3iw}{2(2\pi)^2}\sum_{\xi_1-\xi_2+\xi_3-\xi_4=0} \widehat{w}(\xi_1)\overline{\widehat{w}(\xi_2)}\widehat{w}(\xi_3)\overline{\widehat{w}(\xi_4)},
\nn
}
where we have once again used symmetry and 
\eee{\label{A-def}
\mathcal{A}(w)
=
\frac{i}{2(2\pi)^{5/2}}
\sum_{\substack{\xi_2+\xi_4\neq \xi_1+\xi_3 \\ \xi_2+\xi_4\neq \xi_1+\xi_5 \\ \xi_2+\xi_4\neq \xi_3+\xi_5}}
e^{i(\xi_1-\xi_2+\xi_3-\xi_4+\xi_5)x} \widehat{w}(\xi_1)\overline{\widehat{w}(\xi_2)} \widehat{w}(\xi_3)\overline{\widehat{w}(\xi_4)}\widehat{w}(\xi_5).
} 

Next, we observe that
\ddd{
-\frac{1}{4\pi}\int_0^{2\pi} |w|^4(\theta,t)~d\theta
&=
-\frac{1}{4\pi}\int_0^{2\pi} \frac{1}{(2\pi)^2}\sum_{\xi_1,...,\xi_4} e^{i(\xi_1-\xi_2+\xi_3-\xi_4)\theta}\widehat{w}(\xi_1)\overline{\widehat{w}(\xi_2)}\widehat{w}(\xi_3)\overline{\widehat{w}(\xi_4)}~d\theta
\nn\\
&=
-\frac{1}{2(2\pi)^2}\sum_{\xi_1-\xi_2+\xi_3-\xi_4=0} \widehat{w}(\xi_1)\overline{\widehat{w}(\xi_2)}\widehat{w}(\xi_3)\overline{\widehat{w}(\xi_4)}.
\nn
}
Thus, we may write \eqref{w-pde2} as
\ddd{
w_t-iw_{xx}
&=
NR(-w^2\overline{w}_x)
+\mathcal{A}(w)-i\mu|w|^2 w
+\frac{iw}{(2\pi)^2}\sum_{\xi_1-\xi_2+\xi_3-\xi_4=0} \widehat{w}(\xi_1)\overline{\widehat{w}(\xi_2)}\widehat{w}(\xi_3)\overline{\widehat{w}(\xi_4)}+i\mu^2 w
\nn\\
&=
NR(-w^2\overline{w}_x)
+\mathcal{A}(w)
-i\mu|w|^2 w
+\frac{iw}{2\pi}\int_0^{2\pi}|w(\theta,t)|^4~d\theta+i\mu^2 w.
\nn}
\indent 
Finally, we recognize that $|w|^2 w$  contains the same resonant frequencies as $-w^2\overline{w}_x$. Thus, writing 
\ddd{
|w|^2 w
&=
NR(|w|^2 w)+\frac{2w}{2\pi}\sum_{\xi}|\widehat{w}(\xi)|^2
\nn\\
&=
NR(|w|^2 w)+\frac{2w}{2\pi}\no{w}_{L^2(\mathbb{T})}^2
=
NR(|w|^2 w)+2\mu w
\nn
}
where, analogously to \eqref{NR-def-0}, 
\eee{\label{NR-def}
NR(|w|^2 w)
=
\frac{1}{(2\pi)^{3/2}}
\sum_{\xi_1\neq \xi_2\neq \xi_3}
e^{i(\xi_1-\xi_2+\xi_3)x}  \widehat{w}(\xi_1)\overline{\widehat{w}(\xi_2)}\widehat{w}(\xi_3), 
}
we overall conclude that $w$ formally satisfies the equation
\eee{
w_t-iw_{xx}
=
NR(-w^2\overline{w}_x)
+\mathcal{A}(w)
-i\mu NR(|w|^2w)
+\frac{iw}{2\pi}\int_0^{2\pi}|w(\theta,t)|^4~d\theta-i\mu^2 w.
}
In turn, recalling the second gauge transformation \eqref{z}, 
we deduce that $z$ satisfies
\begin{subequations}\label{gauge-ivp}
\ddd{
&z_t-iz_{xx}
=
NR(-z^2 \overline{z}_x)+\mathcal{A}(z)-i\mu NR(|z|^2 z)
+\frac{i}{16\pi^4}\no{z(t)}_{L^4(\mathbb{T})}^4 z, \quad x\in\mathbb{T}, \ t\in\mathbb{R},
\label{gauge-eq}
\\
&z(x,0)
=
z_0(x)\in H^s(\mathbb{T}).
\label{gauge-id}
}
\end{subequations}
Next, we proceed with differentiation by parts at the interaction representation level $e^{it\partial_x^2}z$ of equation \eqref{gauge-eq} in order to determine a nonlinear smoothing effect for the solution $z$ emanating from Theorem \ref{herr-wp}.

First, we isolate the more troublesome terms associated with \eqref{gauge-eq} and then we apply differentiation by parts to these terms. Note that
\ddd{\label{gauge-ft}
\partial_t (e^{it\xi^2}\widehat{z}(\xi))
&=
\frac{1}{(2\pi)^{3/2}}
\sum_{\substack{\xi_1-\xi_2+\xi_3=\xi \\ \xi_1\neq\xi_2\neq\xi_3}} 
e^{it\xi^2} i\xi_2 \widehat{z}(\xi_1)\overline{\widehat{z}(\xi_2)}\widehat{z}(\xi_3)
+e^{it\xi^2}\widehat{\mathcal{A}(z)}(\xi)
\nn\\
&\quad
-i\mu e^{it\xi^2}\mathcal F_x\!\left(NR(|z|^2z)\right)(\xi)
+\frac{i}{16\pi^4}\no{z(t)}_{L^4(\mathbb{T})}^4 e^{it\xi^2} \widehat{z}(\xi).
}
Next, by symmetry, observe that
\ddd{
\sum_{\substack{\xi_1-\xi_2+\xi_3=\xi \\ \xi_1\neq\xi_2\neq\xi_3}} 
e^{it\xi^2} i\xi_2 \widehat{z}(\xi_1)\overline{\widehat{z}(\xi_2)}\widehat{z}(\xi_3)
&=
2\sum_{\substack{\xi_1-\xi_2+\xi_3=\xi \\ \xi_1\neq\xi_2\neq\xi_3 \\ |\xi_1|\geq |\xi_3|}} 
e^{it\xi^2} i\xi_2 \widehat{z}(\xi_1)\overline{\widehat{z}(\xi_2)}\widehat{z}(\xi_3).
\nn
}
Letting 
\eee{
S_{\xi} := \left\{\xi_1-\xi_2+\xi_3=\xi\right\}\cap \left\{\xi_1\neq\xi_2\neq\xi_3\right\} \cap \left\{|\xi_1|\geq |\xi_3|\right\},
}
we have
\ddd{
\sum_{S_{\xi}} 
e^{it\xi^2} i\xi_2 \widehat{z}(\xi_1)\overline{\widehat{z}(\xi_2)}\widehat{z}(\xi_3)
&=
\sum_{\substack{S_{\xi} \\ |\xi_1|\gg |\xi_2|}} e^{it\xi^2} i\xi_2 \widehat{z}(\xi_1)\overline{\widehat{z}(\xi_2)}\widehat{z}(\xi_3)
+
\sum_{\substack{S_{\xi} \\ |\xi_1|\sim|\xi_2|\gg|\xi_3|}} e^{it\xi^2} i\xi_2 \widehat{z}(\xi_1)\overline{\widehat{z}(\xi_2)}\widehat{z}(\xi_3) 
\nn\\
&\quad + 
\sum_{\substack{S_{\xi} \\ |\xi_1|\sim|\xi_2|\sim|\xi_3|}} e^{it\xi^2} i\xi_2 \widehat{z}(\xi_1)\overline{\widehat{z}(\xi_2)}\widehat{z}(\xi_3) 
+
\sum_{\substack{S_{\xi} \\ |\xi_1|\ll |\xi_2|}} e^{it\xi^2} i\xi_2 \widehat{z}(\xi_1)\overline{\widehat{z}(\xi_2)}\widehat{z}(\xi_3)
\nn\\
&=:
e^{it\xi^2}
(2\pi)^{3/2}
\sum_{\ell=1}^3 \widehat{\mathcal{B}_{\ell}(z)}(\xi)+\sum_{\substack{S_{\xi} \\ |\xi_1|\ll |\xi_2|}} e^{it\xi^2} i\xi_2 \widehat{z}(\xi_1)\overline{\widehat{z}(\xi_2)}\widehat{z}(\xi_3),\nn
}
where
\sss{\label{B-def}
\ddd{
\widehat{\mathcal{B}_1(z)}(\xi) &=\frac{1}{(2\pi)^{3/2}}\sum_{\substack{S_{\xi} \\ |\xi_1|\gg |\xi_2|}} i\xi_2 \widehat{z}(\xi_1)\overline{\widehat{z}(\xi_2)}\widehat{z}(\xi_3),
\\
\widehat{\mathcal{B}_2(z)}(\xi) &=\frac{1}{(2\pi)^{3/2}} \sum_{\substack{S_{\xi} \\ |\xi_1|\sim|\xi_2|\gg|\xi_3|}} i\xi_2 \widehat{z}(\xi_1)\overline{\widehat{z}(\xi_2)}\widehat{z}(\xi_3),\\
\widehat{\mathcal{B}_3(z)}(\xi) &=\frac{1}{(2\pi)^{3/2}}\sum_{\substack{S_{\xi} \\ |\xi_1|\sim|\xi_2|\sim|\xi_3|}}i\xi_2 \widehat{z}(\xi_1)\overline{\widehat{z}(\xi_2)}\widehat{z}(\xi_3).
}
}

Thus, it follows that
\ddd{
\partial_t (e^{it\xi^2}\widehat{z}(\xi))
&=
e^{it\xi^2} \bigg(2\sum_{\ell=1}^3 \widehat{\mathcal{B}_{\ell}(z)}(\xi) +\widehat{\mathcal{A}(z)}(\xi)+i\mu \, \mathcal F_x\!\left(NR(|z|^2 z)\right)(\xi) +\frac{i}{16\pi^4}\no{z(t)}_{L^4(\mathbb{T})}^4 \widehat{z}(\xi)\bigg)
\nn\\
&\quad
+
\frac{1}{(2\pi)^{3/2}}
\sum_{\substack{S_{\xi} \\ |\xi_1|\ll |\xi_2|}} e^{it\xi^2} i\xi_2 \widehat{z}(\xi_1)\overline{\widehat{z}(\xi_2)}\widehat{z}(\xi_3).\label{gauge-ft2}
}
We will apply differentiation by parts on the last term of \eqref{gauge-ft2}, as this term does not necessarily gain derivatives via the Fourier restriction norm method. First, observe that
\eee{
\sum_{\substack{S_{\xi} \\ |\xi_1|\ll |\xi_2|}} e^{it\xi^2} i\xi_2 \widehat{z}(\xi_1)\overline{\widehat{z}(\xi_2)}\widehat{z}(\xi_3)
=
\sum_{\substack{S_{\xi} \\ |\xi_1|\ll |\xi_2|}} e^{2it(\xi_2-\xi_1)(\xi_2-\xi_3)} i\xi_2 (e^{it\xi_1^2}\widehat{z}(\xi_1))\overline{(e^{it\xi_2^2}\widehat{z}(\xi_2))}(e^{it\xi_3^2}\widehat{z}(\xi_3)).\nn
}
Let 
\eee{
\Psi=2(\xi_2-\xi_1)(\xi_2-\xi_3).
}
Applying differentiation by parts yields
\ddd{
&\quad
\frac{1}{(2\pi)^{3/2}}
\sum_{\substack{S_{\xi} \\ |\xi_1|\ll |\xi_2|}} e^{2it(\xi_2-\xi_1)(\xi_2-\xi_3)} i\xi_2 (e^{it\xi_1^2}\widehat{z}(\xi_1))\overline{(e^{it\xi_2^2}\widehat{z}(\xi_2))}(e^{it\xi_3^2}\widehat{z}(\xi_3))
\nn\\
&=:
\partial_t \left[e^{it\xi^2}\widehat{NF(z)}(\xi)\right] -\sum_{\ell=1}^3\widehat{\mathcal{N}_{\ell}(z)}(\xi),\nn
}
where
\ddd{
\widehat{NF(z)}(\xi) &= 
\frac{1}{(2\pi)^{3/2}}
\sum_{\substack{S_{\xi} \\ |\xi_1|\ll |\xi_2|}} 
\frac{\xi_2}{\Psi} \, \widehat{z}(\xi_1)\overline{\widehat{z}(\xi_2)}\widehat{z}(\xi_3), 
\label{NF-def}
\\
\widehat{\mathcal{N}_1 (z)}(\xi) &=
\frac{1}{(2\pi)^{3/2}}
\sum_{\substack{S_{\xi} \\ |\xi_1|\ll |\xi_2|}} 
e^{it\Psi} \, \frac{\xi_2}{\Psi} \, \partial_t(e^{it\xi_1^2}\widehat{z}(\xi_1))\overline{(e^{it\xi_2^2}\widehat{z}(\xi_2))}(e^{it\xi_3^2}\widehat{z}(\xi_3)), 
}
and $\mathcal{N}_2(z)$, $\mathcal{N}_3(z)$ are given by analogous expressions where the time derivative is applied on the second and on the third term respectively. 
Note that, although the analysis for $\mathcal{N}_3(z)$ is essentially the same with the one for $\mathcal{N}_1(z)$,   its inclusion is  necessary since the symmetry in $\xi_1$ and $\xi_3$ has been broken.

Inserting \eqref{gauge-ft} into $\mathcal{N}_1(z)$, we obtain
\ddd{
\widehat{\mathcal{N}_1 (z)}(\xi) 
&=
\frac{1}{8\pi^3}
\sum_{\substack{S_{\xi} \\ |\xi_1|\ll |\xi_2|}} 
e^{it\Psi} \, \frac{\xi_2}{\Psi} \,
\bigg(\sum_{\substack{\xi_4-\xi_5+\xi_6=\xi_1 \\ \xi_4\neq\xi_5\neq\xi_6}} 
e^{it\xi_1^2} i\xi_5 \widehat{z}(\xi_4)\overline{\widehat{z}(\xi_5)}\widehat{z}(\xi_6)\bigg)
\overline{(e^{it\xi_2^2}\widehat{z}(\xi_2))}(e^{it\xi_3^2}\widehat{z}(\xi_3))
\nn\\
&\quad
+
\frac{1}{(2\pi)^{3/2}}
\sum_{\substack{S_{\xi} \\ |\xi_1|\ll |\xi_2|}} 
e^{it\xi^2}\, \frac{\xi_2}{\Psi} \, 
\widehat{\mathcal{A}(z)}(\xi_1)
\overline{\widehat{z}(\xi_2)}\widehat{z}(\xi_3)
\nn\\
&\quad
-\frac{i\mu}{(2\pi)^{3/2}}
\sum_{\substack{S_{\xi} \\ |\xi_1|\ll |\xi_2|}} 
e^{it\xi^2} \, \frac{\xi_2}{\Psi} \, 
\mathcal F_x\!\left(NR(|z|^2z)\right)(\xi_1)
\overline{\widehat{z}(\xi_2)}\widehat{z}(\xi_3) \nn\\
&\quad
+
\frac{i}{(2\pi)^{11/2}}\no{z(t)}_{L^4(\mathbb{T})}^4
\sum_{\substack{S_{\xi} \\ |\xi_1|\ll |\xi_2|}} 
e^{it\Psi} \, \frac{\xi_2}{\Psi} \, (e^{it\xi_1^2} \widehat{z}(\xi_1))\overline{(e^{it\xi_2^2}\widehat{z}(\xi_2))}(e^{it\xi_3^2}\widehat{z}(\xi_3))
\nn\\
&=:
e^{it\xi^2}\sum_{\ell=1}^3 \widehat{\mathcal{N}_{1,\ell}(z)}(\xi) 
+\frac{i}{16\pi^4} \no{z(t)}_{L^4(\mathbb{T})}^4 e^{it\xi^2} \widehat{NF(z)}(\xi).
\label{N1l-def}
}
Similarly, for $\mathcal{N}_2(z)$  we find
\ddd{
\widehat{\mathcal{N}_2 (z)}(\xi) 
&=
\frac{1}{8\pi^3}
\sum_{\substack{S_{\xi} \\ |\xi_1|\ll |\xi_2|}} 
e^{it\Psi} \, \frac{\xi_2}{\Psi} \, 
(e^{it\xi_1^2}\widehat{z}(\xi_1))
\overline{
\bigg(\sum_{\substack{\xi_4-\xi_5+\xi_6=\xi_2 \\ \xi_4\neq\xi_5\neq\xi_6}} 
e^{it\xi_2^2} i\xi_5 \widehat{z}(\xi_4)\overline{\widehat{z}(\xi_5)}\widehat{z}(\xi_6)\bigg)
}
(e^{it\xi_3^2}\widehat{z}(\xi_3))\nn\\
&\quad
+
\frac{1}{(2\pi)^{3/2}}
\sum_{\substack{S_{\xi} \\ |\xi_1|\ll |\xi_2|}} 
e^{it\xi^2} \, \frac{\xi_2}{\Psi} \, 
\widehat{z}(\xi_1)
\overline{\widehat{\mathcal{A}(z)}(\xi_2)}
\widehat{z}(\xi_3)
\nn\\
&\quad
+\frac{i\mu}{(2\pi)^{3/2}}
\sum_{\substack{S_{\xi} \\ |\xi_1|\ll |\xi_2|}} 
e^{it\xi^2} \, \frac{\xi_2}{\Psi} \, 
\widehat{z}(\xi_1)
\overline{\mathcal F_x\!\left(NR(|z|^2z)\right)(\xi_2)}
\widehat{z}(\xi_3) 
\nn\\
&\quad
-
\frac{i}{(2\pi)^{11/2}}\no{z(t)}_{L^4(\mathbb{T})}^4
\sum_{\substack{S_{\xi} \\ |\xi_1|\ll |\xi_2|}} 
e^{it\Psi} \, \frac{\xi_2}{\Psi} \, (e^{it\xi_1^2} \widehat{z}(\xi_1))\overline{(e^{it\xi_2^2}\widehat{z}(\xi_2))}(e^{it\xi_3^2}\widehat{z}(\xi_3))
\nn\\
&=:
e^{it\xi^2}\sum_{\ell=1}^3 \widehat{\mathcal{N}_{2,\ell}(z)}(\xi)
-
\frac{i}{16\pi^4} \no{z(t)}_{L^4(\mathbb{T})}^4 e^{it\xi^2} \widehat{NF(z)}(\xi).
\label{N2l-def}
}
Further expanding these terms, we see that
\ddd{
\widehat{\mathcal{N}_{1,1}(z)}(\xi)
&=
\frac{1}{8\pi^3}
\sum_{N_{1,1}(\xi)}
\frac{i\xi_2 \xi_4}{\Psi}
\widehat{z}(\xi_1)\overline{\widehat{z}(\xi_2)}\widehat{z}(\xi_3)\overline{\widehat{z}(\xi_4)}\widehat{z}(\xi_5),\nn \\
\widehat{\mathcal{N}_{1,2}(z)}(\xi)
&=
\frac{1}{8\pi^3}
\sum_{N_{1,2}(\xi)}
\frac{i\xi_6}{\Psi}
\widehat{z}(\xi_1)\overline{\widehat{z}(\xi_2)}\widehat{z}(\xi_3)\overline{\widehat{z}(\xi_4)}\widehat{z}(\xi_5)\overline{\widehat{z}(\xi_6)}\widehat{z}(\xi_7)\nn\\
\widehat{\mathcal{N}_{2,1}(z)}(\xi)
&=
\frac{1}{8\pi^3}
\sum_{N_{2,1}(\xi)}
\frac{-i(\xi_2-\xi_3+\xi_4)\xi_3}{\Psi}
\widehat{z}(\xi_1)\overline{\widehat{z}(\xi_2)}\widehat{z}(\xi_3)\overline{\widehat{z}(\xi_4)}\widehat{z}(\xi_5),\nn\\
\widehat{\mathcal{N}_{2,2}(z)}(\xi)
&=
\frac{1}{8\pi^3}
\sum_{N_{2,2}(\xi)}
\frac{i(\xi_2-\xi_3+\xi_4-\xi_5+\xi_6)}{\Psi}
\widehat{z}(\xi_1)\overline{\widehat{z}(\xi_2)}\widehat{z}(\xi_3)\overline{\widehat{z}(\xi_4)}\widehat{z}(\xi_5)\overline{\widehat{z}(\xi_6)}\widehat{z}(\xi_7),\nn
}
where the sets $N_{1,1}, N_{1,2}, N_{2,1}$ and $N_{2,2}$ are given by
\ddd{
N_{1,1}(\xi)
&=
\{\xi_1-\xi_2+\xi_3-\xi_4+\xi_5=\xi\} \cap \{\xi_1-\xi_2+\xi_3\neq \xi_4\neq \xi_5\}
\nn\\
&\hspace{0.5cm}\cap \{\xi_1\neq\xi_2\neq\xi_3\} \cap \{|\xi_1-\xi_2+\xi_3|\ll |\xi_4|\} 
\cap \{|\xi_1|\geq |\xi_5|\},
\nn\\
N_{1,2}(\xi)
&=
\{\xi_1-\xi_2+\xi_3-\xi_4+\xi_5-\xi_6+\xi_7=\xi\} \cap \{\xi_1-\xi_2+\xi_3-\xi_4+\xi_5\neq \xi_6 \neq \xi_7\}
\nn\\
&\hspace{0.5cm}\cap \{\xi_2+\xi_4\neq \xi_1+\xi_3, \xi_1+\xi_5, \xi_3+\xi_5\}\cap \{|\xi_1-\xi_2+\xi_3+\xi_4+\xi_5|\ll |\xi_6|\}
\cap \{|\xi_1|\geq |\xi_7|\},
\nn\\
N_{2,1}(\xi)
&=
\{\xi_1-\xi_2+\xi_3-\xi_4+\xi_5=\xi\} \cap \{\xi_1\neq \xi_2-\xi_3+\xi_4\neq \xi_5\} \nn\\
&\hspace{0.5cm}\cap \{\xi_2\neq\xi_3\neq\xi_4\} \cap \{|\xi_1|\ll |\xi_2-\xi_3+\xi_4|\}
\cap \{|\xi_1|\geq |\xi_5|\},
\nn\\
N_{2,2}(\xi)
&=
\{\xi_1-\xi_2+\xi_3-\xi_4+\xi_5-\xi_6+\xi_7=\xi\} \cap \{\xi_1\neq \xi_2-\xi_3+\xi_4-\xi_5+\xi_6\neq \xi_7\}
\nn\\
&\hspace{0.5cm} \cap \{\xi_3+\xi_5\neq \xi_2+\xi_4, \xi_2+\xi_6, \xi_4+\xi_6\} \cap \{|\xi_1|\ll |\xi_2-\xi_3+\xi_4-\xi_5+\xi_6|\} \cap\{|\xi_1|\geq |\xi_7|\}.
\nn
}
The analysis of $\mathcal{N}_{1,1}(z)$ and $\mathcal{N}_{2,1}(z)$ covers the one of $\mathcal{N}_{1,3}(z)$ and $\mathcal{N}_{2,3}(z)$, respectively; therefore, the analysis of the latter two terms  is omitted. 

Finally, we note that $\mathcal{N}_{2,1}(z)$ possesses the unfavorable frequency interaction $\xi_2+\xi_4=\xi_1+\xi_5$. To handle this, we  decompose $\mathcal{N}_{2,1}(z)$ as 
\ddd{
\widehat{\mathcal{N}_{2,1}(z)}(\xi)
&=
\frac{1}{8\pi^3}
\sum_{\substack{N_{2,1}(\xi) \\ \xi_2+\xi_4\neq \xi_1+\xi_5}}
\frac{-i(\xi_2-\xi_3+\xi_4)\xi_3}{\Psi} 
\widehat{z}(\xi_1)\overline{\widehat{z}(\xi_2)}\widehat{z}(\xi_3)\overline{\widehat{z}(\xi_4)}\widehat{z}(\xi_5) \nn\\
&\quad
+
\frac{1}{8\pi^3}
\sum_{\substack{{N_{2,1}(\xi)}\\ \xi_2+\xi_4=\xi_1+\xi_5}}
\frac{-i(\xi_2-\xi_3+\xi_4)\xi_3}{\Psi}
\widehat{z}(\xi_1)\overline{\widehat{z}(\xi_2)}\widehat{z}(\xi_3)\overline{\widehat{z}(\xi_4)}\widehat{z}(\xi_5)
\nn\\
&=:
\widehat{\mathcal{N}_{2,1}^*(z)}(\xi)
+
\frac{1}{8\pi^3}
\sum_{\substack{{N_{2,1}(\xi)}\\ \xi_2+\xi_4=\xi_1+\xi_5}} \frac{i(\xi-\xi_1-\xi_5)\xi}{(\xi-\xi_1)(\xi-\xi_5)}
\widehat{z}(\xi_1)\overline{\widehat{z}(\xi_2)}\widehat{z}(\xi)\overline{\widehat{z}(\xi_4)}\widehat{z}(\xi_5)
\nn\\
&=
\widehat{\mathcal{N}_{2,1}^*(z)}(\xi)
+
\frac{1}{8\pi^3}
\sum_{\substack{{N_{2,1}(\xi)}\\ \xi_2+\xi_4=\xi_1+\xi_5}} \frac{i\xi_1 \xi_5}{(\xi-\xi_1)(\xi-\xi_5)}
\widehat{z}(\xi_1)\overline{\widehat{z}(\xi_2)}\widehat{z}(\xi)\overline{\widehat{z}(\xi_4)}\widehat{z}(\xi_5) \nn\\
&\quad
+
\frac{i}{8\pi^3}
\sum_{\substack{{N_{2,1}(\xi)}\\ \xi_2+\xi_4=\xi_1+\xi_5}} \widehat{z}(\xi_1)\overline{\widehat{z}(\xi_2)}\widehat{z}(\xi)\overline{\widehat{z}(\xi_4)}\widehat{z}(\xi_5)
\nn\\
&=:
\sum_{\ell =1}^3 \widehat{\mathcal{N}_{2,\ell}^*(z)}(\xi)
\label{N2l*-def}
}
and further expand $\mathcal{N}^*_{2,3}(z)$ as follows:
\ddd{
\widehat{\mathcal{N}_{2,3}^*(z)}(\xi) 
&=
\frac{i}{8\pi^3}
\widehat{z}(\xi) \sum_{\substack{N_{2,1}(\xi) \\ \xi_1+\xi_3=\xi_2+\xi_4}}
\widehat{z}(\xi_1)\overline{\widehat{z}(\xi_2)} \widehat{z}(\xi_3)\overline{\widehat{z}(\xi_4)}
\nn\\
&=
\frac{i}{8\pi^3}
\widehat{z}(\xi) \sum_{\substack{\xi_1+\xi_3=\xi_2+\xi_4 \\ |\xi_1|, |\xi_2|, |\xi_3|, |\xi_4| \ll |\xi|}}
\widehat{z}(\xi_1)\overline{\widehat{z}(\xi_2)} \widehat{z}(\xi_3)\overline{\widehat{z}(\xi_4)}
\nn\\
&\quad
+
\frac{i}{8\pi^3}
\widehat{z}(\xi) \sum_{\substack{N_{2,1}(\xi) \\ \xi_1+\xi_3=\xi_2+\xi_4 \\ |\xi|\lesssim |\xi_{\ell}|,~\text{for some}~\ell}}
\widehat{z}(\xi_1)\overline{\widehat{z}(\xi_2)} \widehat{z}(\xi_3)\overline{\widehat{z}(\xi_4)}
\nn\\
&=:
\frac{i}{16\pi^4}\widehat{z}(\xi) \int_0^{2\pi} \Big|\sum_{|\xi_1| \ll |\xi|} e^{i\xi_1 \theta} \widehat{z}(\xi_1)\Big|^4~d\theta
+
\widehat{\mathcal{E}_1(z)}(\xi)
\nn\\
&=
\frac{i}{16\pi^4}
\widehat{z}(\xi) \int_0^{2\pi} \Big|z-\sum_{|\xi|\lesssim |\xi_1|} e^{i\xi_1\theta} \widehat{z}(\xi_1)\Big|^4~d\theta
+\widehat{\mathcal{E}_1(z)}(\xi)
\nn\\
&=:
\frac{i}{16\pi^4}
\widehat{z}(\xi) \no{z}_{L^4(\mathbb{T})}^4 + \widehat{\mathcal{E}_1(z)}(\xi) +\widehat{\mathcal{E}_2(z)}(\xi),\label{E-def}
}
where 
\ddd{
-\frac{16\pi^4 i}{\sqrt{2\pi}}
\widehat{\mathcal{E}_2(z)}(\xi)
&=
-2\widehat{z}(\xi)\sum_{|\xi|\lesssim |\xi_1|} \overline{\widehat{z}(\xi_1)} \widehat{|z|^2 z}(\xi_1)
-
2\widehat{z}(\xi)\sum_{|\xi|\lesssim |\xi_1|}\widehat{z}(\xi_1) \overline{\widehat{|z|^2 z}(\xi_1)}
\nn\\
&\quad 
+
4\widehat{z}(\xi) \sum_{|\xi|\lesssim |\xi_1|, |\xi_2|}\widehat{z}(\xi_1)\overline{\widehat{z}(\xi_2)} \, \overline{\widehat{|z|^2}(\xi_1-\xi_2)}
+
\widehat{z}(\xi)\sum_{|\xi|\lesssim |\xi_1|, |\xi_2|} \widehat{z}(\xi_1)\widehat{z}(\xi_2)\overline{\widehat{z^2}(\xi_1+\xi_2)}\nn\\
&\quad 
+
\widehat{z}(\xi) \sum_{|\xi|\lesssim |\xi_1|, |\xi_2|} \overline{\widehat{z}(\xi_1)}\overline{\widehat{z}(\xi_2)} \widehat{z^2}(\xi_1+\xi_2)\nn\\
&\quad
-
2\widehat{z}(\xi)\sum_{|\xi|\lesssim |\xi_1|, |\xi_2|, |\xi_3|} \widehat{z}(\xi_1)\overline{\widehat{z}(\xi_2)} \overline{\widehat{z}(\xi_3)}\widehat{z}(-\xi_1+\xi_2+\xi_3)
\nn\\
&\quad
-
2\widehat{z}(\xi)\sum_{|\xi|\lesssim |\xi_1|, |\xi_2|, |\xi_3|} \widehat{z}(\xi_1)\overline{\widehat{z}(\xi_2)} \widehat{z}(\xi_3)\overline{\widehat{z}(\xi_1-\xi_2+\xi_3)}
\nn\\
&\quad
+
2\pi\widehat{z}(\xi)\sum_{\substack{\xi_1+\xi_3=\xi_2+\xi_4\\ |\xi|\lesssim |\xi_1|, |\xi_2|, |\xi_3|, |\xi_4|}}\widehat{z}(\xi_1)\overline{\widehat{z}(\xi_2)}\widehat{z}(\xi_3)\overline{\widehat{z}(\xi_4)}.
\label{E-def2}
}

Overall, we have  
\ddd{
&\quad
\partial_t (e^{it\xi^2}\widehat{z}(\xi))
=
e^{it\xi^2} \bigg(2\sum_{\ell=1}^3 \widehat{\mathcal{B}_{\ell}(z)}(\xi) +\widehat{\mathcal{A}(z)}(\xi)-i\mu \, \mathcal F_x\!\left(NR(|z|^2 z)\right)(\xi)\bigg)
+
\sum_{\substack{S_{\xi} \\ |\xi_1|\ll |\xi_2|}} e^{it\xi^2} i\xi_2 \widehat{z}(\xi_1)\overline{\widehat{z}(\xi_2)}\widehat{z}(\xi_3)\nn\\
&\hskip 2.8cm
+\frac{i}{16\pi^4}\no{z(t)}_{L^4(\mathbb{T})}^4 e^{it\xi^2} \widehat{z}(\xi)
\nn\\
&=
\partial_t \left[e^{it\xi^2}\widehat{NF(z)}(\xi)\right]
+
e^{it\xi^2} \bigg(2\sum_{\ell=1}^3 \widehat{\mathcal{B}_{\ell}(z)}(\xi) +\widehat{\mathcal{A}(z)}(\xi)-i\mu \, \mathcal F_x\!\left(NR(|z|^2 z)\right)(\xi)
-\sum_{\ell_1, \ell_2=1}^3 \widehat{\mathcal{N}_{\ell_1, \ell_2}(z)}(\xi)
\bigg),
\nn\\
&\hskip 2.8cm
+\frac{i}{16\pi^4}\no{z(t)}_{L^4(\mathbb{T})}^4 e^{it\xi^2} \widehat{z}(\xi)
-\frac{i}{16\pi^4} \no{z(t)}_{L^4(\mathbb{T})}^4 e^{it\xi^2} \widehat{NF(z)}(\xi),
\nn
}
where 
\ddd{
\widehat{\mathcal{N}_{2,1}(z)}(\xi)
&=
\sum_{\ell = 1}^2 \left(\widehat{\mathcal{N}_{2,\ell}^*(z)}(\xi)
+
\widehat{\mathcal{E}_{\ell}(z)}(\xi)
\right)
+
\frac{i}{16\pi^4} \widehat{z}(\xi) \no{z}_{L^4(\mathbb{T})}^4.
\nn
}
Hence, we see that
\ddd{
\partial_t (e^{it\xi^2}\widehat{z}(\xi))
&=
\partial_t \left[e^{it\xi^2}\widehat{NF(z)}(\xi)\right]
\nn\\
&\quad
+
e^{it\xi^2} \bigg(2\sum_{\ell=1}^3 \widehat{\mathcal{B}_{\ell}(z)}(\xi) +\widehat{\mathcal{A}(z)}(\xi)-i\mu \, \mathcal F_x\!\left(NR(|z|^2 z)\right)(\xi)
-\sum_{\substack{\ell_1, \ell_2=1 \\ (\ell_1, \ell_2)\neq (2,1)}}^3 \widehat{\mathcal{N}_{\ell_1, \ell_2}(z)}(\xi)
\bigg)
\nn\\
&\quad
-
e^{it\xi^2} \sum_{\ell = 1}^2 \left(\widehat{\mathcal{N}_{2,\ell}^*(z)}(\xi)
+
\widehat{\mathcal{E}_{\ell}(z)}(\xi)
\right)
-\frac{i}{16\pi^4} \no{z(t)}_{L^4(\mathbb{T})}^4 e^{it\xi^2} \widehat{NF(z)}(\xi).
\label{dbp-z}
}
Finally, we integrate \eqref{dbp-z} on $[0,t]$ and invert the Fourier transform to arrive at
\ddd{\label{duhamel-z}
z(x,t)
&=
e^{it\partial_x^2}z_0(x)
+
NF(z)(x,t)-e^{it\partial_x^2}NF(z_0)(x)+\int_0^t e^{i(t-t')\partial_x^2} N(z)(x,t')~dt',
}
where
\ddd{
\widehat{N(z)}(\xi)
&=
2\sum_{\ell=1}^3 \widehat{\mathcal{B}_{\ell}(z)}(\xi) +\widehat{\mathcal{A}(z)}(\xi)-i\mu \, \mathcal F_x\!\left(NR(|z|^2 z)\right)(\xi)
-\sum_{\substack{\ell_1, \ell_2=1 \\ (\ell_1, \ell_2)\neq (2,1)}}^3 \widehat{\mathcal{N}_{\ell_1, \ell_2}(z)}(\xi)\nn\\
&\quad
-
\sum_{\ell = 1}^2 \left(\widehat{\mathcal{N}_{2,\ell}^*(z)}(\xi)
+
\widehat{\mathcal{E}_{\ell}(z)}(\xi)
\right)
-\frac{i}{16\pi^4} \no{z(t)}_{L^4(\mathbb{T})}^4 e^{it\xi^2} \widehat{NF(z)}(\xi)
.\nn
}
\section{A Priori Estimates}\label{s-apriori}
We begin by recalling certain linear estimates from the literature which are relevant for our analysis. From Bourgain \cite{b1993}, we have the following useful estimates.
\begin{lemma}[\cite{b1993}]
For any $\delta>0$, let $u\in X^{0,\frac{3}{8}+\delta}$. Then, there exists a constant $c>0$ such that
\ddd{\label{l4-est}
\no{u}_{L^4_{t,x}}
&\leq
c
\no{u}_{X^{0,\frac{3}{8}+\delta}}.
}
Furthermore, if $u\in X^{\delta, \frac{1}{2}+\delta}$ then there exists a constant $c>0$ such that
\ddd{\label{l6-est}
\no{u}_{L^6_{t,x}}
&\leq
c
\no{u}_{X^{\delta,\frac{1}{2}+\delta}}.
}
\end{lemma}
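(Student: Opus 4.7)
The plan is to follow Bourgain's original strategy from \cite{b1993}, which derives both bounds as periodic Strichartz estimates by combining the transference principle for $X^{s,b}$ spaces with counting of integer lattice points associated with the Schr\"odinger dispersion relation $\tau + \xi^2 = 0$.

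For the $L^4_{t,x}$ bound \eqref{l4-est}, I would square both sides and use the identity $\no{u}_{L^4_{t,x}}^2 = \no{u^2}_{L^2_{t,x}}$. By Plancherel this reduces to a bilinear convolution estimate for $\tilde u$ on $\mathbb Z \times \mathbb R$. A dyadic decomposition in modulation $\langle \tau + \xi^2 \rangle \sim L$, combined with Cauchy-Schwarz in Fourier space and the elementary observation that for fixed $\xi \in \mathbb Z$ the quadratic polynomial $\xi_1 \mapsto \xi_1^2 + (\xi - \xi_1)^2$ attains each integer value at most twice, produces the required piecewise bilinear bounds. Summing the dyadic pieces against the weight $\langle \tau + \xi^2 \rangle^{3/8+\delta}$ then closes the argument and identifies $3/8$ as the critical threshold for $b$.

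For the $L^6_{t,x}$ bound \eqref{l6-est}, I would invoke the transference principle. Writing the space-time Fourier inversion formula and shifting $\tau \mapsto \tau - \xi^2$ inside the $\tau$-integral, the function $u$ is realized as a continuous superposition of modulated linear Schr\"odinger waves,
\[
u(x,t) = \frac{1}{\sqrt{2\pi}} \int_{\mathbb R} e^{it\tau}\, \bigl[e^{it\partial_x^2} f_\tau\bigr](x)\, d\tau, \qquad \widehat{f_\tau}(\xi) = \tilde u(\xi, \tau - \xi^2).
\]
Minkowski in $\tau$ followed by Cauchy-Schwarz with the weight $\langle \tau \rangle^{1/2+\delta}$ bounds $\no{u}_{L^6_{t,x}}$ by the $L^2_\tau$ norm of $\langle \tau \rangle^{1/2+\delta} \no{e^{it\partial_x^2} f_\tau}_{L^6_{t,x}(\mathbb T \times [0,1])}$. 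The required linear input is Bourgain's periodic $L^6$ Strichartz estimate $\no{e^{it\partial_x^2} P_N g}_{L^6_{t,x}(\mathbb T \times [0,1])} \lesssim N^\delta \no{P_N g}_{L^2(\mathbb T)}$, obtained via Littlewood-Paley localization. Unwinding the change of variables recovers precisely the $X^{\delta, 1/2+\delta}$ norm on the right-hand side.

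The main obstacle in both cases is number-theoretic. For \eqref{l4-est} the counting input is soft (two preimages per level set of a quadratic in one variable), but the linear $L^6$ Strichartz bound underlying \eqref{l6-est} rests on a trilinear counting estimate closely tied to the Gauss sphere problem, which is the ultimate source of the $N^\delta$ loss and, consequently, of the $\delta$ regularity loss in the statement. Choosing $\delta$ small throughout the argument and absorbing it into the $3/8 + \delta$ (respectively $1/2 + \delta$) weight is what makes the estimates sharp in the scale of $X^{s,b}$ spaces.
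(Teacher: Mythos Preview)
The paper does not prove this lemma; it simply quotes the two estimates from Bourgain \cite{b1993} as known linear inputs, so there is no ``paper's own proof'' to compare against. Your sketch is a faithful outline of Bourgain's original argument (Plancherel plus quadratic-level-set counting for the $L^4$ bound, transference plus the periodic $L^6$ Strichartz estimate with its divisor-type $N^\delta$ loss for the $L^6$ bound), and nothing more is needed here.
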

Moreover, due to the Sobolev embedding we have the following result proved by Herr in \cite{h2006}.
\begin{lemma}[\cite{h2006}]
\label{bourgain-sobolev}
If $2\leq p,q<\infty$, $b\geq\frac{1}{2}-\frac{1}{p}$ and $s\geq \frac{1}{2}-\frac{1}{q}$, then
\eee{\label{bourgain-sobolev-est}
\no{u}_{L^p_t L^q_x}
\lesssim
\no{u}_{X^{s,b}}.
}
\end{lemma}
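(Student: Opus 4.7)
The plan is to decouple the claim into two one-dimensional Sobolev embeddings --- one in space and one in time --- that are stitched together via Minkowski's inequality, after first transferring $X^{s,b}$ to an $H^b_t H^s_x$ norm by conjugating with the free Schr\"odinger propagator.

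First I would invoke the classical one-dimensional Sobolev embedding on the circle,
$\|f\|_{L^q(\mathbb{T})}\lesssim \|f\|_{H^s(\mathbb{T})}$ for $2\leq q<\infty$ and $s\geq 1/2-1/q$, applied pointwise in $t$. This immediately yields $\|u\|_{L^p_tL^q_x}\leq \|u\|_{L^p_tH^s_x}$, reducing the problem to establishing
$$
\|u\|_{L^p_tH^s_x}\lesssim \|u\|_{X^{s,b}}
$$
under the hypotheses $2\leq p<\infty$ and $b\geq 1/2-1/p$.

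To handle this remaining inequality I would pass to the ``interaction representation'' by setting $v(x,t):=e^{-it\partial_x^2}u(x,t)$. A direct Fourier computation shows $\widetilde v(\xi,\tau)=\widetilde u(\xi,\tau+\xi^2)$, so the defining $\ell^2_\xi L^2_\tau$ norm in $X^{s,b}$ unwinds to $\|u\|_{X^{s,b}}=\|v\|_{H^b_tH^s_x}$. Since $e^{-it\partial_x^2}$ is an isometry on $H^s_x$ for every fixed $t$, we also have $\|u\|_{L^p_tH^s_x}=\|v\|_{L^p_tH^s_x}$, and so it suffices to prove $\|v\|_{L^p_tH^s_x}\lesssim \|v\|_{H^b_tH^s_x}$. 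Since $p\geq 2$, Minkowski's inequality lets me interchange the $L^p_t$ norm with the $\ell^2_\xi$ norm that defines $H^s_x$, giving
$$
\|v\|_{L^p_tH^s_x} \;\leq\; \Big\|\,\langle\xi\rangle^s\|\widehat v(\xi,\cdot)\|_{L^p_t}\,\Big\|_{\ell^2_\xi}.
$$
For each fixed $\xi$ I would then apply the one-dimensional Sobolev embedding $H^b(\mathbb R)\hookrightarrow L^p(\mathbb R)$, valid precisely in the range $2\leq p<\infty$, $b\geq 1/2-1/p$, and reassemble to arrive at $\|v\|_{H^b_tH^s_x}=\|u\|_{X^{s,b}}$.

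The main subtlety I anticipate is the endpoint $b=1/2-1/p$: simply interpolating between the trivial embedding $X^{0,0}=L^2_{t,x}$ and the easy embedding $X^{0,1/2+}\hookrightarrow L^\infty_tL^2_x$ only delivers the strict inequality $b>1/2-1/p$, so at the endpoint one is forced to invoke the genuine fractional Sobolev embedding on the line (equivalently, the Hardy--Littlewood--Sobolev inequality applied to the Bessel potential in $t$). Once this endpoint embedding is admitted, the composition of the two Sobolev embeddings through the Minkowski exchange of norms yields the stated estimate $\|u\|_{L^p_tL^q_x}\lesssim \|u\|_{X^{s,b}}$ on the dense class $\mathcal S(\mathbb R_t;C^\infty(\mathbb T_x))$, and a standard density argument extends it to all of $X^{s,b}$.
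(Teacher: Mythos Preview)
Your argument is correct and is precisely the route the paper has in mind: the paper does not give its own proof of this lemma but simply cites \cite{h2006} with the remark ``due to the Sobolev embedding,'' which is exactly the mechanism you have spelled out. The decomposition into a spatial Sobolev embedding $H^s(\mathbb T)\hookrightarrow L^q(\mathbb T)$ followed by the temporal embedding $H^b(\mathbb R)\hookrightarrow L^p(\mathbb R)$, glued together via the interaction representation and Minkowski, is the standard proof and matches Herr's.
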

We now state and prove the nonlinear estimates needed for establishing Theorem \ref{dbp-wp}.
\begin{lemma}
\label{norm-form-1}
For any $\delta>0$, let $z\in H^s$ with $s>\frac{1}{2}+\delta$. Then, the quantity $NF(z)$ defined by \eqref{NF-def} satisfies the estimate
\ddd{\label{norm-form-1-est}
\no{NF(z)}_{H^{s+a}(\mathbb{T})}
\lesssim
\no{z}_{H^{\frac{1}{2}+\delta}(\mathbb{T})}^2
\no{z}_{H^s(\mathbb{T})}, \quad 0<a<1.
}
\end{lemma}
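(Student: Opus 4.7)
The quantity $NF(z)$ is the normal-form term produced by differentiation by parts, whose Fourier symbol is $\xi_2/\Psi$ restricted to the region $S_\xi\cap\{|\xi_1|\ll|\xi_2|\}$. My plan is to (i) show that this multiplier gains one full derivative in the output frequency $\xi$, (ii) convert the resulting pointwise Fourier estimate into a triple convolution, and (iii) apply Young's inequality together with a standard Cauchy--Schwarz argument to dominate $\|\widehat z\|_{\ell^1}$ by $\|z\|_{H^{1/2+\delta}}$.

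For (i), the region of summation forces $|\xi_3|\leqslant|\xi_1|\ll|\xi_2|$. Since $\xi=\xi_1-\xi_2+\xi_3$, it follows that $|\xi|\sim|\xi_2|$. From the identities $\xi_2-\xi_1=\xi_3-\xi$ and $\xi_2-\xi_3=\xi_1-\xi$, both factors appearing in $\Psi=2(\xi_2-\xi_1)(\xi_2-\xi_3)$ have magnitude $\sim|\xi|$. Hence
$$\left|\frac{\xi_2}{\Psi}\right|\lesssim \frac{1}{\langle\xi\rangle},$$
which is the crucial derivative gain.

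For (ii)--(iii), inserting the bound from (i) into the definition of $\widehat{NF(z)}(\xi)$ and using $\langle\xi\rangle\sim\langle\xi_2\rangle$ yields
$$\langle\xi\rangle^{s+a}\,|\widehat{NF(z)}(\xi)|\lesssim \langle\xi\rangle^{a-1}\sum_{\substack{\xi_1-\xi_2+\xi_3=\xi\\ |\xi_1|,|\xi_3|\ll|\xi|}}|\widehat z(\xi_1)|\cdot\langle\xi_2\rangle^{s}|\widehat z(\xi_2)|\cdot|\widehat z(\xi_3)|.$$
Since $a<1$, one has $\langle\xi\rangle^{a-1}\leqslant 1$, and dropping the side constraint $|\xi_1|,|\xi_3|\ll|\xi|$ bounds the right-hand side pointwise by a triple convolution $(|\widehat z|\ast H\ast|\widehat z|)(\xi)$, where $H(\eta):=\langle\eta\rangle^{s}|\widehat z(-\eta)|$. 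Young's inequality applied with exponents $(1,1,2)\mapsto 2$ then gives
$$\|NF(z)\|_{H^{s+a}(\mathbb T)}\lesssim \|\widehat z\|_{\ell^1}^{2}\,\|H\|_{\ell^2}.$$
Finally, Cauchy--Schwarz with the weight $\langle\cdot\rangle^{-(1+2\delta)}$ produces $\|\widehat z\|_{\ell^1}\lesssim\|z\|_{H^{1/2+\delta}(\mathbb T)}$, while $\|H\|_{\ell^2}=\|z\|_{H^s(\mathbb T)}$ by definition, which completes the proof.

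The only delicate step is the symbol bound in (i); once that is secured, the rest is a routine trilinear convolution estimate. The assumption $s>1/2+\delta$ enters solely to ensure summability of $\widehat z$, and no Bourgain space technology is needed since this lemma is an algebraic (fixed-time) bound on $NF(z)$ rather than a spacetime estimate.
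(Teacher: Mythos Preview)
Your proof is correct and follows essentially the same route as the paper. Both arguments hinge on the identical symbol bound $|\xi_2/\Psi|\lesssim\langle\xi\rangle^{-1}$ obtained from $|\xi_1|,|\xi_3|\ll|\xi_2|\sim|\xi|$; the only cosmetic difference is that the paper moves $\langle\xi\rangle^{s}\sim\langle\xi_2\rangle^{s}$ and then applies Cauchy--Schwarz directly in $\xi_1,\xi_3$ with the weight $\langle\xi_1\rangle^{-(1+2\delta)}\langle\xi_3\rangle^{-(1+2\delta)}$, whereas you package the same step as Young's inequality $\ell^1*\ell^1*\ell^2\hookrightarrow\ell^2$ followed by $\|\widehat z\|_{\ell^1}\lesssim\|z\|_{H^{1/2+\delta}}$ --- these are equivalent formulations of the same estimate.
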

\begin{proof}
We have
\ddd{
\no{NF(z)}_{H^{s+a}(\mathbb{T})}
&=
\bigg(
\sum_{\xi} \langle{\xi\rangle}^{2(s+a)}
 \left| \widehat{NF(z)}(\xi)\right|^2
\bigg)^{\frac{1}{2}}
\nn\\
&\leq
\Bigg(
\sum_{\xi} \langle{\xi\rangle}^{2(s+a)}
\bigg(
\sum_{\substack{S_{\xi} \\ |\xi_1| \ll |\xi_2|}}
\frac{\langle{\xi_2\rangle}}{\langle{\xi_2-\xi_1\rangle}\langle{\xi_2-\xi_3\rangle}}
|\widehat{z}(\xi_1)| |{\widehat{z}(\xi_2)}| |\widehat{z}(\xi_3)|\bigg)^2
\Bigg)^{\frac{1}{2}}.
\nn
}
Next, we note that $|\xi-\xi_1|\sim |\xi-\xi_3|\sim |\xi_2|\sim |\xi|$. Applying this result and the Cauchy-Schwarz inequality in $\xi_1$ and $\xi_3$, we find
\ddd{
&\quad \ 
\Bigg(
\sum_{\xi} \langle{\xi\rangle}^{2(s+a)}
\bigg(
\sum_{\substack{\xi_1, \xi_3 \\ |\xi_1| \ll |\xi-\xi_1-\xi_3|}} 
\frac{\langle{\xi-\xi_1-\xi_3\rangle}}{\langle{\xi-\xi_1\rangle}\langle{\xi-\xi_3\rangle}} 
|\widehat{z}(\xi_1)| |{\widehat{z}(\xi-\xi_1-\xi_3)}| |\widehat{z}(\xi_3)|
\bigg)^2
\Bigg)^{\frac{1}{2}}
\nn\\
&\lesssim
\Bigg(
\sum_{\xi} \langle{\xi\rangle}^{2(s+a-1)}
\bigg(
\sum_{\xi_1, \xi_3} 
|\widehat{z}(\xi_1)| |{\widehat{z}(\xi-\xi_1-\xi_3)}| |\widehat{z}(\xi_3)|
\bigg)^2
\Bigg)^{\frac{1}{2}}
\nn
}
\ddd{
&\lesssim
\Bigg(\sum_{\xi} \langle{\xi\rangle}^{2(a-1)} \sum_{\xi_1, \xi_3}
\langle{\xi_1\rangle}^{2(\frac{1}{2}+\delta)} |\widehat{z}(\xi_1)|^2
\langle{\xi-\xi_1-\xi_3\rangle}^{2s} |\widehat{z}(\xi-\xi_1-\xi_3)|^2
\langle{\xi_3\rangle}^{2(\frac{1}{2}+\delta)} |\widehat{z}(\xi_3)|^2
\Bigg)^{\frac{1}{2}}
\nn\\
&\leq
\sup_{\xi} \langle{\xi\rangle}^{(a-1)}
\no{z}_{H^{\frac{1}{2}+\delta}(\mathbb{T})}^2
\no{z}_{H^s(\mathbb{T})}.
\nn
}
The above supremum is finite for $a<1$, completing the proof.
\end{proof}
\begin{corollary}
\label{norm-form-2}
For any $\delta>0$, let $z\in C([0,T];H^s(\mathbb{T}))$ with $s>\frac{1}{2}+\delta$, $T>0$. Then,
\ddd{\label{norm-form-2-est}
\no{\int_0^t e^{i(t-t')\partial_x^2} \no{z(t')}_{L^4(\mathbb{T})}^4 NF(z)(t')~dt'}_{H^{s+a}(\mathbb{T})}
&\lesssim
T
\no{z}_{C([0,T];H^{\frac{1}{2}+\delta}(\mathbb{T}))}^6
\no{z}_{C([0,T];H^s(\mathbb{T}))}
}
for all $0<a<1$ and $t\in[0,T]$.
\end{corollary}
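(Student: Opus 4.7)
The plan is to derive this corollary from Lemma \ref{norm-form-1} by purely mechanical manipulations at the operator level: move the $H^{s+a}$ norm inside the time integral, discard the unitary Schr\"odinger propagator, and estimate the two factors $\no{z(t')}_{L^4(\mathbb{T})}^4$ and $\no{NF(z)(t')}_{H^{s+a}(\mathbb{T})}$ separately, harvesting the factor $T$ from the time integration in the last step.

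First, I would apply Minkowski's integral inequality to pass the $H^{s+a}(\mathbb{T})$ norm under the time integral, and then invoke the fact that $e^{i(t-t')\partial_x^2}$ is a surjective isometry on every Sobolev space $H^{r}(\mathbb{T})$ to remove the linear evolution at no cost. This reduces the left-hand side to
\begin{equation*}
\int_0^t \no{z(t')}_{L^4(\mathbb{T})}^4 \, \no{NF(z)(t')}_{H^{s+a}(\mathbb{T})}\, dt'.
\end{equation*}

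Second, I would apply Lemma \ref{norm-form-1} pointwise in $t'$, which is available precisely on the advertised range $0<a<1$, to obtain $\no{NF(z)(t')}_{H^{s+a}(\mathbb{T})} \lesssim \no{z(t')}_{H^{\frac{1}{2}+\delta}(\mathbb{T})}^{2}\, \no{z(t')}_{H^{s}(\mathbb{T})}$. For the quartic $L^4_x$ factor I would invoke the one-dimensional Sobolev embedding $H^{\frac{1}{2}+\delta}(\mathbb{T})\hookrightarrow L^4(\mathbb{T})$, which holds since $\frac{1}{2}+\delta > \frac{1}{4}$; this gives $\no{z(t')}_{L^4(\mathbb{T})}^{4} \lesssim \no{z(t')}_{H^{\frac{1}{2}+\delta}(\mathbb{T})}^{4}$. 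Combining these two bounds produces an integrand controlled by $\no{z(t')}_{H^{\frac{1}{2}+\delta}(\mathbb{T})}^{6}\,\no{z(t')}_{H^{s}(\mathbb{T})}$.

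Finally, I would bound each pointwise norm by the supremum over $t' \in [0,T]$, replacing the time-dependent norms by the $C([0,T];H^{\cdot}(\mathbb{T}))$ norms, and absorb the remaining $\int_0^t dt' \leq T$ into the overall constant, yielding exactly \eqref{norm-form-2-est}. Since the substantive multilinear work was already carried out in Lemma \ref{norm-form-1} and the Sobolev embedding on $\mathbb{T}$ is standard, no genuine obstacle is anticipated; the only point requiring any care is verifying that the exponent $\frac{1}{2}+\delta$ is in the admissible range for the embedding into $L^4$, which it is.
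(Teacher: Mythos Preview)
Your proposal is correct and matches the paper's proof essentially step for step: Minkowski plus unitarity of $e^{i(t-t')\partial_x^2}$ to bring the $H^{s+a}$ norm inside the integral, a sup-in-time bound to extract the factor $T$, and then Sobolev embedding together with Lemma~\ref{norm-form-1}. The only cosmetic difference is that the paper takes the supremum before invoking the lemma and the embedding, whereas you do it after; the content is identical.
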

\begin{proof}
We simply note that
\ddd{
\no{\int_0^t e^{i(t-t')\partial_x^2} \no{z(t')}_{L^4(\mathbb{T})}^4 NF(z)(t')~dt'}_{H^{s+a}(\mathbb{T})}
&\leq 
\int_0^t \no{z(t')}_{L^4(\mathbb{T})}^4 \no{NF(z)(t')}_{H^{s+a}(\mathbb{T})}~dt'
\nn\\
&\leq 
T
\sup_{t\in[0,T]}
\no{z(t)}_{L^4(\mathbb{T})}^4
\no{NF(z)(t)}_{H^{s+a}(\mathbb{T})}.
\nn
}
The claim then follows from Sobolev embedding and Lemma \ref{norm-form-1}.
\end{proof}
\begin{lemma}
\label{non-dbp-terms}
Let $s>\frac{1}{2}$. Then, for all $\delta>0$ and $0<a<\min\left\{s-\frac{1}{2}-15\delta,\frac{1}{2}-15\delta\right\}$, the quantities $\mathcal B_\ell(z)$ defined by \eqref{B-def} satisfy the estimate
\eee{\label{non-dbp-terms-est}
\Big\| \sum_{\ell =1}^3 \mathcal{B}_{\ell}(z) \Big\|_{X^{s+a,-\frac{1}{2}+\delta}}
\lesssim
\no{z}_{Z^{\sigma}}^2
\no{z}_{Z^s}.
}
\end{lemma}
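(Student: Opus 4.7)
The plan is to establish \eqref{non-dbp-terms-est} by duality in Bourgain spaces.  Since the dual of $X^{s+a,-1/2+\delta}$ under the $L^2_{t,x}$ pairing is $X^{-s-a,1/2-\delta}$, it suffices to bound, for every $h$ with $\|h\|_{X^{-s-a,1/2-\delta}}=1$ and every $\ell\in\{1,2,3\}$, the trilinear pairing $\iint_{\mathbb{R}\times\mathbb{T}}\mathcal{B}_\ell(z)\,\overline h\,dx\,dt$.  After Plancherel this pairing becomes a convolution-type sum-integral in $(\xi_j,\tau_j)$, $j=1,2,3$, constrained by $\xi_1-\xi_2+\xi_3=\xi$ and $\tau_1-\tau_2+\tau_3=\tau$, with amplitude $i\xi_2\,\langle\xi\rangle^{s+a}\langle\tau+\xi^2\rangle^{-1/2+\delta}$ acting on $\widetilde z(\xi_1,\tau_1)\overline{\widetilde z(\xi_2,\tau_2)}\widetilde z(\xi_3,\tau_3)\overline{\widetilde h(\xi,\tau)}$ and restricted to the frequency regions defining $\mathcal{B}_1,\mathcal{B}_2,\mathcal{B}_3$.

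The key algebraic input is the modulation identity
\[
\Psi:=-\xi_1^2+\xi_2^2-\xi_3^2+\xi^2=2(\xi_2-\xi_1)(\xi_2-\xi_3),
\]
valid on $S_\xi$; it couples the four modulations $M:=\tau+\xi^2$ and $M_j:=\tau_j+\xi_j^2$ via the relation $M_1-M_2+M_3-M=\Psi$, which forces $\max\{\langle M\rangle,\langle M_j\rangle\}\gtrsim|\Psi|$.  Writing $\xi_{\max}:=\max_j|\xi_j|$, inspection of \eqref{B-def} gives $|\Psi|\gtrsim\xi_{\max}$ in $\mathcal{B}_1$ (where $|\xi_2-\xi_1|\sim|\xi_1|$) and in $\mathcal{B}_2$ (where $|\xi_2-\xi_3|\sim|\xi_2|$), while in $\mathcal{B}_3$ the constraint $\xi_1\neq\xi_2\neq\xi_3$ only yields $|\Psi|\geq 2$; however, in $\mathcal{B}_3$ all three inputs are of comparable size and $|\xi|\lesssim\xi_{\max}$.

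The standard case split proceeds according to which of the four modulations dominates $|\Psi|$.  In each subcase the factor carrying the dominant modulation is placed in $L^2_{t,x}$, so that the weight $\langle\cdot\rangle^{1/2-\delta}$ becomes a gain of $|\Psi|^{1/2-\delta}$, while the remaining two $z$-factors are handled via the $L^4_{t,x}$ Strichartz estimate \eqref{l4-est}, supplemented by Lemma \ref{bourgain-sobolev} whenever one of them is of low frequency.  One assigns $\langle\xi_{\max}\rangle^s$ to the input of highest frequency (contributing a $Z^s$ norm) and estimates the other two by $Z^\sigma$ via $\langle\xi_j\rangle^{1/2+\delta}$ Sobolev weights (permissible since $\sigma>1/2$); the remaining loss $\langle\xi\rangle^{s+a}|\xi_2|\langle\xi_{\max}\rangle^{-s}\lesssim\xi_{\max}^{1+a}$ is then absorbed, for $|\Psi|\gtrsim\xi_{\max}$, by the modulation gain $|\Psi|^{1/2-\delta}\gtrsim\xi_{\max}^{1/2-\delta}$ combined with the two $1/2+\delta$ Sobolev weights, provided $a<\min\{1/2-15\delta,\,s-1/2-15\delta\}$.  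Dyadic Littlewood--Paley summation converges by Cauchy--Schwarz and geometric series, thanks to the residual $2^{-c\delta k}$ surplus left over.

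The main obstacle will be the inner region of $\mathcal{B}_3$, where all three inputs are large and comparable; here $|\Psi|^{1/2-\delta}$ can be as small as $O(1)$ and thus cannot pay for the derivative $|\xi_2|\sim\xi_{\max}$ carried by the symbol.  The remedy is to split $\mathcal{B}_3$ further according to whether $|\xi|$ is comparable to or much smaller than $\xi_{\max}$.  In the high-output subregion one still has $\langle\xi\rangle^{s+a}\sim\xi_{\max}^{s+a}$ to redistribute favorably across the inputs and the argument above applies.  In the low-output subregion, the weight $\langle\xi\rangle^{s+a}$ is harmless and one instead pairs two of the $z$-factors via the $L^6_{t,x}$ estimate \eqref{l6-est} and the third (of highest frequency, carrying the $Z^s$ weight) against the output in $L^2_{t,x}$, distributing the derivative $|\xi_2|$ between them; it is precisely this budget that forces the stringent constraint $a<s-1/2-15\delta$, the small $\delta$-losses being the standard price for using \eqref{l4-est}--\eqref{l6-est} and Lemma \ref{bourgain-sobolev} slightly above their critical indices.
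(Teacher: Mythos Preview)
Your overall architecture---duality, the modulation identity $\Psi=2(\xi_2-\xi_1)(\xi_2-\xi_3)$, case analysis on which factor carries the dominant modulation, and $L^4$/$L^6$ Strichartz input---is the same as the paper's.  Two points, however, are misplaced.

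First, $\mathcal{B}_3$ requires neither modulation nor an output-size dichotomy.  Since $|\xi_1|\sim|\xi_2|\sim|\xi_3|\sim 2^j$ and automatically $|\xi|\lesssim 2^j$, one simply places all four factors in $L^4_{t,x}$ via \eqref{l4-est}, obtaining
\[
\|\mathcal{B}_3(z)\|_{X^{s+a,-\frac12+\delta}}\lesssim \sum_j 2^{j(s+a+1)}\big\|(z_{\sim j})^\#\big\|_{L^4}^3\big\|(\varphi_j)^\#\big\|_{L^4}\lesssim\sum_j 2^{j(a+1-2\sigma)}\|z\|_{X^{\sigma,\frac12}}^2\|z\|_{X^{s,\frac12}},
\]
summable for $a<2\sigma-1=\min\{2s-1,1\}$, a weaker constraint than the one in the lemma.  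Your claim that ``the argument above applies'' in the high-output subregion is misleading, since that argument relied on the modulation gain $|\Psi|^{1/2-\delta}\gtrsim\xi_{\max}^{1/2-\delta}$, which is unavailable in $\mathcal{B}_3$; it is purely the $L^4$ numerology---three comparable high-frequency inputs absorbing the weight $\xi_{\max}^{s+a+1}$---that closes the estimate.  The low-output $L^6$ argument you propose is unnecessary.

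Second, you misidentify the source of the sharp constraint $a<\min\{s-\tfrac12-15\delta,\,\tfrac12-15\delta\}$.  It arises not from $\mathcal{B}_3$ but from $\mathcal{B}_1$ (and analogously $\mathcal{B}_2$) in the subcase $\max_m|\tau_m+\xi_m^2|=|\tau_3+\xi_3^2|$.  There $z_3$ goes to $L^2$, leaving $z_{\sim j}$, $z_k$ and $\varphi_j$ to share H\"older exponent $\tfrac12$.  Your prescription ``the remaining two $z$-factors in $L^4$'' then forces $\varphi$ into $L^\infty$, which fails since $\varphi$ lies only in $X^{0,1/2-\delta}$.  The paper instead puts all three in $L^6$; but $\varphi_j$ embeds into $L^6$ only through the interpolated embedding $X^{5\delta,\,1/2-\delta}\hookrightarrow L^6$, obtained by interpolating \eqref{l6-est} against Lemma~\ref{bourgain-sobolev} at $p=q=6$.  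This costs $5\delta$ spatial derivatives on each of the three $L^6$ factors, producing exactly the $15\delta$ loss in the stated range of $a$.  That interpolation step is the real engine behind the constraint and is absent from your sketch.
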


\begin{proof}
First, recall that $\max\left\{\left|\tau_m +\xi_m^2\right|\right\}_{m=0}^3 \gtrsim |(\xi_2-\xi_1)(\xi_2-\xi_3)|$. By applying Littlewood-Paley-type projections, Plancherel's theorem and duality, we have 
\ddd{
&\quad
\no{\mathcal{B}_1(z)}_{X^{s+a,-\frac{1}{2}+\delta}}
\nn\\
&\lesssim
\sum_{j,k} 2^{j(s+a)} 2^k 
\sup_{\no{\varphi}_{X^{0,\frac{1}{2}-\delta}}=1} 
\sum_{\substack{S_{\xi} \\ |\xi_1| \gg |\xi_2|}}  \underset{\tau_1-\tau_2+\tau_3 = \tau}{\int}  
|\widetilde{z}_{\sim j}(\xi_1,\tau_1)|
|\widetilde{z}_k(\xi_2,\tau_2)|
|\widetilde{z}(\xi_3,\tau_3)|
|\widetilde{\varphi_j}(\xi,\tau)|
~d\sigma.
\nn
}

First, suppose that $\max\left\{\left|\tau_m +\xi_m^2\right|\right\}_{m=0}^3=|\tau+\xi^2|$. Then,
\ddd{
&\quad 
\sum_{\substack{S_{\xi} \\ |\xi_1| \gg |\xi_2|}} \underset{\tau_1-\tau_2+\tau_3 = \tau}{\int}
|\widetilde{z}_{\sim j}(\xi_1,\tau_1)|
|\widetilde{z}_k(\xi_2,\tau_2)|
|\widetilde{z}(\xi_3,\tau_3)|
|\widetilde{\varphi_j}(\xi,\tau)|
~d\sigma
\nn\\
&\lesssim
\sum_{\substack{S_{\xi} \\ |\xi_1| \gg |\xi_2|}} \underset{\tau_1-\tau_2+\tau_3 = \tau}{\int}
\frac{\langle{\tau+\xi^2\rangle}^{\frac{1}{2}-\delta}}{\langle{\xi_2-\xi_1\rangle}^{\frac{1}{2}-\delta} \langle{\xi_2-\xi_3\rangle}^{\frac{1}{2}-\delta} }
|\widetilde{z}_{\sim j}(\xi_1,\tau_1)|
|\widetilde{z}_k(\xi_2,\tau_2)|
|\widetilde{z}(\xi_3,\tau_3)|
|\widetilde{\varphi_j}(\xi,\tau)|
~d\sigma,
\nn
}
where $d\sigma$ is the surface measure.
Since $|\xi_1|\gg |\xi_2|$, we may apply H\"older's inequality and estimate \eqref{l4-est} to arrive at
\ddd{
&\quad
\sum_{j,k} 2^{j(s+a)} 2^k \sup_{\no{\varphi}_{X^{0,\frac{1}{2}-\delta}}=1}
\sum_{\substack{S_{\xi} \\ |\xi_1| \gg |\xi_2|}} \underset{\tau_1-\tau_2+\tau_3 = \tau}{\int}
|\widetilde{z}_{\sim j}(\xi_1,\tau_1)|
|\widetilde{z}_k(\xi_2,\tau_2)|
|\widetilde{z}(\xi_3,\tau_3)|
|\widetilde{\varphi_j}(\xi,\tau)|
~d\sigma
\nn\\
&\lesssim
\sum_{j,k} 2^{j(s+a-\frac{1}{2}+\delta)} 2^k
\big\|(z_{\sim j})^{\#} (z_k)^{\#} z^{\#}\big\|_{L^2_{t,x}}
\nn\\
&\lesssim
\sum_{j,k} 2^{j(s+a-\frac{1}{2}+\delta)} 2^k 
\big\|z^{\#}\big\|_{L^{\infty}_{t,x}} 
\big\|(z_{\sim j})^{\#}\big\|_{L^4_{t,x}}
\big\|(z_k)^{\#}\big\|_{L^4_{t,x}}
\nn\\
&\lesssim
\sum_{j,k} 2^{j(a-\frac{1}{2}+\delta)} 2^{k(1-\sigma)}
\no{z}_{Y^{\frac{1}{2}+\delta,0}}
\no{z}_{X^{\sigma,\frac{1}{2}}}
\no{z}_{X^{s,\frac{1}{2}}}.
\nn
}
If $s<1$, then
\ddd{
\sum_{j,k} 2^{j(a-\frac{1}{2}+\delta)} 2^{k(1-\sigma)}
&\lesssim
\sum_j 2^{j(a+\frac{1}{2}+\delta-s)},
\nn
}
which converges provided that $a< s-\frac{1}{2}-\delta$. 
If $s\geq1$, then the sum converges for $a<\frac{1}{2}-\delta$. If $\max\left\{\left|\tau_m +\xi_m^2\right|\right\}_{m=0}^3=|\tau_1+\xi_1^2|$ or $\max\left\{|\tau_m +\xi_m^2|\right\}_{m=0}^3=|\tau_2+\xi_2^2|$, a similar argument yields the same result. 
\\
\indent
Next, recall that the interpolation of the Bourgain spaces $X^{s_1, b_1}$ and $X^{s_2, b_2}$ at $\theta$ is given by 
$$
[X^{s_1,b_1}, X^{s_2, b_2}]_{\theta}
=
X^{s_1(1-\theta)+s_2\theta, b_1(1-\theta)+b_2\theta}.
$$
By estimates \eqref{l6-est} and \eqref{bourgain-sobolev-est}, we have $X^{\delta,\frac{1}{2}+\delta}\hookrightarrow L^6(\mathbb{T}\times\mathbb{R})$ and $X^{\frac{1}{3},\frac{1}{3}}\hookrightarrow L^6(\mathbb{T}\times\mathbb{R})$, respectively. Thus,
$$
[X^{\frac{1}{3},\frac{1}{3}}, X^{\delta,\frac{1}{2}+\delta}]_{\theta}
\hookrightarrow
[L^6(\mathbb{T}\times\mathbb{R}), L^6(\mathbb{T}\times\mathbb{R})]_{\theta}
=
L^6(\mathbb{T}\times\mathbb{R})
$$
which for $\theta = (\frac{1}{3}-\delta)/(\frac{1}{6}+\delta)$ gives 
$$
X^{5\delta,\frac{1}{2}-\delta}
\hookrightarrow
[X^{\frac{1}{3},\frac{1}{3}}, X^{\delta,\frac{1}{2}+\delta}]_{\theta}
\hookrightarrow
L^6(\mathbb{T}\times\mathbb{R}).
$$
If $\max\left\{|\tau_m +\xi_m^2|\right\}_{m=0}^3=|\tau_3+\xi_3^2|$, then we apply $X^{5\delta, \frac{1}{2}-\delta}\hookrightarrow L^6(\mathbb{T}\times\mathbb{R})$ along with H\"older's inequality to yield
\ddd{
&\sum_{j,k} 2^{j(s+a)} 2^k \sup_{\no{\varphi}_{X^{0,\frac{1}{2}-\delta}}=1}
\sum_{\substack{S_{\xi} \\ |\xi_1| \gg |\xi_2|}} \underset{\tau_1-\tau_2+\tau_3 = \tau}{\int}
|\widetilde{z}_{\sim j}(\xi_1,\tau_1)|
|\widetilde{z}_k(\xi_2,\tau_2)|
|\widetilde{z}(\xi_3,\tau_3)|
|\widetilde{\varphi_j}(\xi,\tau)|
~d\sigma
\nn\\
&\lesssim
\sum_{j,k} 2^{j(s+a-\frac{1}{2})} 2^k \no{z}_{X^{0,\frac{1}{2}}} 
\sup_{\no{\varphi}_{X^{0,\frac{1}{2}-\delta}}=1}
\big\|(z_{\sim j})^{\#} (z_k)^{\#} (\varphi_j)^{\#}\big\|_{L^2_{t,x}}
\nn\\
&\lesssim
\sum_{j,k} 2^{j(a+10\delta-\frac{1}{2})} 2^{k(1+5\delta-\sigma)} 
\no{z}_{X^{\sigma,\frac{1}{2}}}^2
\no{z}_{X^{s,\frac{1}{2}}}
\nn
}
If $s<1$, then
\ddd{
\sum_{j,k} 2^{j(a+10\delta-\frac{1}{2})} 2^{k(1+5\delta-s)}
&\lesssim
\sum_{j,k} 2^{j(a+15\delta+\frac{1}{2}-s)},
\nn
}
which converges for $a<s-\frac{1}{2}-15\delta$. If $s\geq 1$, then the sum converges for $a<\frac{1}{2}-15\delta$.
Overall, we have shown that $\mathcal{B}_1(z) \in X^{s+a,-\frac{1}{2}+\delta}$ provided that $0<a<\min\left\{s-\frac{1}{2}-15\delta, \frac{1}{2}-15\delta\right\}$.

Next, we note that by the definition \eqref{B-def} in the case of $\widehat{\mathcal{B}_2(z)}(\xi)$ we have $|\xi_1|\sim |\xi_2|\gg |\xi_3|$. Thus, $|\xi_2-\xi_3|\sim |\xi_2|$, which allows us to yield the same estimates as those obtained for $\mathcal{B}_1(z)$ and thereby deduce that $\mathcal{B}_2(z)\in X^{s+a,-\frac{1}{2}+\delta}$ for $a<\min\left\{s-\frac{1}{2}-15\delta, \frac{1}{2}-15\delta\right\}$.

Finally, by applying Littlewood-Paley-type projections, H\"older's inequality and estimate \eqref{l4-est} once more, we have
\ddd{
\no{\mathcal{B}_3(z)}_{X^{s+a,-\frac{1}{2}+\delta}}
&\lesssim
\sum_j 2^{j(s+a+1)}\sup_{\no{\varphi}_{X^{0,\frac{1}{2}-\delta}}}
\int_{\mathbb{T}\times\mathbb{R}}
[(z_{\sim j})^{\#}]^3 (\varphi_j)^{\#}
~dxdt
\nn\\
&\lesssim
\sum_j 2^{j(s+a+1)}\sup_{\no{\varphi}_{X^{0,\frac{1}{2}-\delta}}}
\big\|(z_{\sim j})^{\#}\big\|_{L^4_{t,x}}^3 \big\|(\varphi_j)^{\#}\big\|_{L^4_{t,x}}
\nn\\
&\lesssim
\sum_j 2^{j(a+1-2\sigma)} 
\no{z}_{X^{\sigma,\frac{1}{2}}}^2
\no{z}_{X^{s,\frac{1}{2}}},
\nn
}
where the above sum converges for $a<2\sigma-1=\min\left\{2s-1, 1\right\}$.
\end{proof}

\begin{lemma}
\label{nonres-quintic}
Let $s>\frac{1}{2}$. Then, the quantity $\mathcal A(z)$ defined analogously to \eqref{A-def} satisfies the estimate
\eee{
\label{nonres-quintic-est}
\no{\mathcal{A}(z)}_{X^{s+a,-\frac{1}{2}+\delta}}
\lesssim
\no{z}_{Z^{\sigma}}^4
\no{z}_{Z^s}
}
for all $\delta>0$ and $0<a<\frac{1}{2}-\delta$.
\end{lemma}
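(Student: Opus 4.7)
The plan is to reduce the estimate to a multilinear inequality via duality, apply Littlewood-Paley decomposition on each factor, and close the estimate using Bourgain's $L^4$ embedding, Sobolev/Bernstein-type bounds that come from the $Y^{\sigma,0}$ component of $Z^\sigma$, and the modulation structure coming from the Schr\"odinger resonance polynomial. Specifically, by duality one has
\[
\|\mathcal{A}(z)\|_{X^{s+a,-\frac{1}{2}+\delta}} = \sup_{\|\varphi\|_{X^{-s-a,\frac{1}{2}-\delta}}\leq 1}\left|\langle \mathcal{A}(z),\varphi\rangle_{L^2_{t,x}}\right|,
\]
so it suffices to control the pairing $|\langle\mathcal{A}(z),\varphi\rangle|$ by the right-hand side of \eqref{nonres-quintic-est} times $\|\varphi\|_{X^{-s-a,\frac{1}{2}-\delta}}$.

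Writing $z=\sum_{N} z_{N}$ and $\varphi=\sum_{j}\varphi_{j}$ via Littlewood-Paley projections, I would use the partial symmetry of $\mathcal{A}$ in $(\xi_1,\xi_3,\xi_5)$ and in $(\xi_2,\xi_4)$ to assume that $N_1$ is the largest of the five dyadic input frequencies; the constraint $\xi=\xi_1-\xi_2+\xi_3-\xi_4+\xi_5$ then forces $2^j\lesssim N_1$. I would next apply H\"older's inequality with the split $L^4\cdot L^4\cdot L^4\cdot L^\infty\cdot L^\infty\cdot L^4$ on $(z_{N_1},z_{N_2},z_{N_3},z_{N_4},z_{N_5},\varphi_{j})$, invoking Bourgain's estimate \eqref{l4-est} for the four $L^4$ factors and the bound $\|z_{N_k}\|_{L^\infty_{t,x}}\lesssim N_k^{\frac{1}{2}+\delta-\sigma}\|z\|_{Y^{\sigma,0}}$ (obtained from $Y^{\sigma,0}\hookrightarrow L^\infty_t H^\sigma_x$ together with $H^{\frac{1}{2}+\delta}(\mathbb{T})\hookrightarrow L^\infty(\mathbb{T})$, valid since $\sigma>\frac{1}{2}+\delta$) for the two $L^\infty$ factors. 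The output weight $\langle\xi\rangle^{s+a}\sim 2^{j(s+a)}$ is then absorbed as $N_1^s$ into the $\ell^2$-sequence $\{\|z_{N_1}\|_{X^{s,1/2}}\}_{N_1}$, leaving a residual $N_1^a$ which must be cancelled when $a>0$.

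The main obstacle is precisely this residual $N_1^a$ factor, which renders the naive dyadic summation divergent. To overcome it, I would exploit the modulation identity
\[
(\tau+\xi^2)-(\tau_1+\xi_1^2)+(\tau_2+\xi_2^2)-(\tau_3+\xi_3^2)+(\tau_4+\xi_4^2)-(\tau_5+\xi_5^2)=\xi^2-\xi_1^2+\xi_2^2-\xi_3^2+\xi_4^2-\xi_5^2=:\Phi,
\]
which ensures that the maximum of $|\tau+\xi^2|$ and the $|\tau_k+\xi_k^2|$ is bounded below by $|\Phi|/6$. The non-resonance conditions $\xi_2+\xi_4\neq\xi_1+\xi_3,\,\xi_1+\xi_5,\,\xi_3+\xi_5$ built into the definition of $\mathcal{A}$ prevent $\Phi$ from vanishing through these particular cancellations. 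A case analysis on which of the six modulations dominates, together with the trade-off of the corresponding modulation weight (at most $\frac{1}{2}-\delta$ on $\varphi_{j}$ and $\frac{1}{2}$ on each $z_{N_k}$) for a power of $|\Phi|$ that dominates the residual $N_1^a$, supplies the missing $N_1^{-\frac{1}{2}+\delta}$ gain; this is precisely why the range $0<a<\frac{1}{2}-\delta$ arises. A final Cauchy-Schwarz in the dyadic indices, combined with the absolute convergence of the $L^\infty$-contributions from $z_{N_4},z_{N_5}$ (ensured by $\sigma>\frac{1}{2}+\delta$), then closes the estimate \eqref{nonres-quintic-est}.
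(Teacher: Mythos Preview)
Your overall strategy---duality, Littlewood--Paley, H\"older with the $L^4$ Strichartz bound, and modulation transfer via the resonance polynomial $\Phi$---matches the paper's approach. However, there is a genuine gap in the modulation step.

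You assert that the non-resonance constraints $\xi_2+\xi_4\neq\xi_1+\xi_3,\,\xi_1+\xi_5,\,\xi_3+\xi_5$ guarantee a lower bound $|\Phi|\gtrsim N_1$ (or at least enough to cancel the residual $N_1^a$). This is false in general. For instance, take $\xi_1=\xi_2=N$, $\xi_3=1$, $\xi_4=2$, $\xi_5=0$: then $\xi=-1$, all three non-resonance conditions are satisfied, yet $\Phi=4$ is bounded while $N_1=N\to\infty$. In such configurations no modulation factor is large, and your proposed trade-off produces nothing.

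The paper closes this gap by splitting according to the \emph{relative} sizes of the ordered frequencies $|\xi_1^*|\geq\cdots\geq|\xi_5^*|$. When $|\xi_1^*|\gg|\xi_2^*|$ (regions $R_1,R_2$), an algebraic computation using the non-resonance conditions does yield $|\Phi|\gtrsim|\xi_1^*|$, and the modulation argument you outline goes through and produces the threshold $a<\tfrac12-\delta$. But when $|\xi_1^*|\lesssim|\xi_2^*|$ or $|\xi_1^*|^{1/2}\lesssim|\xi_2^*|\sim|\xi_3^*|$ (regions $R_3,R_4$), the paper abandons the modulation entirely and instead uses the presence of a \emph{second} large frequency to absorb the weight: placing that factor in $X^{\sigma,\frac12}$ rather than $L^\infty$ yields a gain $N_1^{-\sigma}$, which suffices for $a<\sigma$. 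Your proposal is missing this second mechanism; without it the dyadic sum diverges in the comparable-frequency regime.
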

\begin{proof}
We first apply Littlewood-Paley projections, duality and Plancherel's theorem to infer
\ddd{
\no{\mathcal{A}(z)}_{X^{s+a,-\frac{1}{2}+\delta}}
&\lesssim
\sum_j 2^{j(s+a)}
\sup_{\no{\varphi}_{X^{0,\frac{1}{2}-\delta}}=1}
\sum_{\ell=1}^4 R_{\ell},
\nn
}
where
\eee{
R_1
=
\sum_{\substack{A(\xi) \\ |\xi_1^*|\gg |\xi_2^*|^2}}
\underset{\tau_1-\tau_2+\tau_3-\tau_4+\tau_5=\tau}{\int}
|\widetilde{z}(\xi_1,\tau_1)|
|\widetilde{z}(\xi_2,\tau_2)|
|\widetilde{z}(\xi_3,\tau_3)|
|\widetilde{z}(\xi_4,\tau_4)|
|\widetilde{z}(\xi_5,\tau_5)|
|\widetilde{\varphi_j}(\xi,\tau)|
~d\sigma,
\nn
}
\eee{
R_2
=
\sum_{\substack{A(\xi) \\ |\xi_3^*|^2 \ll |\xi_1^*|\lesssim |\xi_2^*|^2 \\  |\xi_1^*| \gg |\xi_2^*|}}
\underset{\tau_1-\tau_2+\tau_3-\tau_4+\tau_5=\tau}{\int}
|\widetilde{z}(\xi_1,\tau_1)|
|\widetilde{z}(\xi_2,\tau_2)|
|\widetilde{z}(\xi_3,\tau_3)|
|\widetilde{z}(\xi_4,\tau_4)|
|\widetilde{z}(\xi_5,\tau_5)|
|\widetilde{\varphi_j}(\xi,\tau)|
~d\sigma,
\nn
}
\eee{
R_3
=
\sum_{\substack{A(\xi) \\ |\xi_1^*|\lesssim |\xi_2^*|^2, |\xi_3^*|^2 \\ |\xi_1^*| \gg |\xi_2^*|}}
\underset{\tau_1-\tau_2+\tau_3-\tau_4+\tau_5=\tau}{\int}
|\widetilde{z}(\xi_1,\tau_1)|
|\widetilde{z}(\xi_2,\tau_2)|
|\widetilde{z}(\xi_3,\tau_3)|
|\widetilde{z}(\xi_4,\tau_4)|
|\widetilde{z}(\xi_5,\tau_5)|
|\widetilde{\varphi_j}(\xi,\tau)|
~d\sigma
\nn
}
\eee{
R_4
=
\sum_{\substack{A(\xi) \\ |\xi_1^*|\lesssim |\xi_2^*|}}
\underset{\tau_1-\tau_2+\tau_3-\tau_4+\tau_5=\tau}{\int}
|\widetilde{z}(\xi_1,\tau_1)|
|\widetilde{z}(\xi_2,\tau_2)|
|\widetilde{z}(\xi_3,\tau_3)|
|\widetilde{z}(\xi_4,\tau_4)|
|\widetilde{z}(\xi_5,\tau_5)|
|\widetilde{\varphi_j}(\xi,\tau)|
~d\sigma,
\nn
}
and  $\xi_j^*$ denotes the $j$th  frequency among $\xi_1, \ldots, \xi_5$ after ordering all five frequencies with respect to the size of their absolute value, i.e.   $|\xi_1^*|\geq |\xi_2^*|\geq |\xi_3^*| \geq |\xi_4^*| \geq |\xi_5^*|$.
Next, we note that in  the above range of summation and integration we have
$\max\left\{|\tau_{\ell}+\xi_{\ell}^2|\right\}_{\ell=0}^5
\gtrsim
|\Phi|
$,
where $(\xi_0,\tau_0)=(\xi,\tau)$ and
\eee{
\Phi
=
\xi_2^2+\xi_4^2+\xi_2\xi_4+\xi_1(-\xi_2+\xi_3-\xi_4+\xi_5)-\xi_2(\xi_3+\xi_5)+\xi_3(-\xi_4+\xi_5)-\xi_4\xi_5.\nn
}

First, we consider the case $|\xi_1^*|\gg |\xi_2^*|^2$ which corresponds to $R_1$. Suppose that $\xi_1=\xi_1^*$. Since $\xi_2+\xi_4\neq \xi_3+\xi_5$, we have  
\eee{
|\Phi|
\geq
|\xi_1|-8|\xi_2^*|^2
\sim 
|\xi_1|.
\nn
}
The same argument holds if $\xi_3=\xi_1^*$ or $\xi_5=\xi_1^*$.
Next, suppose that $\xi_2=\xi_1^*$. Then
\eee{
|\Phi|
\geq
|\xi_2| |\xi_1-\xi_2+\xi_3-\xi_4+\xi_5|
-
7|\xi_2^*|^2
\geq
|\xi_2|^2-7|\xi_2^*|^2
\sim
|\xi_2|^2
=
|\xi_1^*|^2
\nn
}
while the same argument holds if $\xi_4=\xi_1^*$.

Second, we consider the case $\{|\xi_3^*|^2 \ll |\xi_1^*| \lesssim |\xi_2^*|^2\}\cap \{|\xi_1^*|\gg |\xi_2^*|\}$ which corresponds to $R_2$. If $\xi_1=\xi_1^*$, then
\ddd{
|\Phi|
&\geq
|\xi_1| |-\xi_2+\xi_3-\xi_4+\xi_5| - 8|\xi_2^*|^2
\nn\\
&\gtrsim
|\xi_1| |\xi_2^*|
-8|\xi_2^*|^2
=
|\xi_2^*| (|\xi_1|-8|\xi_2^*|)
\sim
|\xi_1| |\xi_2^*| = |\xi_1^*|^2.
\nn
}
The same result is valid if $\xi_3=\xi_1^*$ or $\xi_5=\xi_1^*$. Moreover, if $\xi_2=\xi_1^*$ or $\xi_4=\xi_1^*$ then  the same argument as before yields
\ddd{
|\Phi|
&\gtrsim
|\xi_1^*|^2.
\nn
}

Thus, we have now shown that within the range of summation and integration associated to $R_1$ and $R_2$ we have $|\Phi|\gtrsim |\xi_1^*|^2$. Therefore, by an application of H\"older's inequality, the Sobolev embedding  and estimate \eqref{l4-est}, we obtain
\ddd{
\sum_j 2^{j(s+a)}
\sup_{\no{\varphi}_{X^{0,\frac{1}{2}-\delta}}=1}
(R_1+R_2)
&\lesssim
\sum_j 2^{j(a-\frac{1}{2}+\delta)}
\no{z}_{Y^{\frac{1}{2}+\delta,0}}^3
\no{z}_{X^{0,\frac{1}{2}}}
\no{z}_{X^{s,\frac{1}{2}}},
\nn
}
where the  sum on the right-hand side converges for $a<\frac{1}{2}-\delta$.

In the region of summation of $R_3$  we have  $|\xi_1^*|^{\frac{1}{2}}\sim |\xi_2^*|\sim |\xi_3^*|$, while in the region of summation of $R_4$  we have $|\xi_1^*|\sim |\xi_2^*|$. Thus, using once again H\"older's inequality and the Sobolev embedding along with estimate \eqref{l4-est}, we find
\ddd{
\sum_j 2^{j(s+a)}
\sup_{\no{\varphi}_{X^{0,\frac{1}{2}-\delta}}=1}
(R_3+R_4)
&\lesssim
\sum_j 2^{j(a-\sigma)}
\no{z}_{Y^{\frac{1}{2}+\delta,0}}^2
\no{z}_{X^{\sigma,\frac{1}{2}}}^2
\no{z}_{X^{s,\frac{1}{2}}},
}
where the sum on the right-hand side converges provided that $a<\sigma=\min\left\{s,1\right\}$.
\end{proof}
\begin{corollary}
\label{nonres-cubic}
Let $s>\frac{1}{2}$. Then, the quantity $NR(|z|^2z)$ defined analogously to \eqref{NR-def} satisfies 
\eee{
\label{nonres-cubic-est}
\no{NR(|z|^2 z)}_{X^{s+a,-\frac{1}{2}+\delta}}
\lesssim
\no{z}_{Z^{\sigma}}^2
\no{z}_{Z^s}
}
for all $\delta>0$ and $0<a<\min\{s-\frac{1}{2}-15\delta, \frac{1}{2}-15\delta\}$.
\end{corollary}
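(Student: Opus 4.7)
The plan is to mirror the proof of Lemma \ref{non-dbp-terms}. The Fourier-side symbol of $NR(|z|^2 z)$ differs from that of $NR(-z^2\overline{z}_x)$ only by the absence of the factor $i\xi_2$, while the non-resonance condition $\xi_1\neq\xi_2\neq\xi_3$ is identical. Consequently, the same resonance lower bound $\max_m|\tau_m+\xi_m^2|\gtrsim |(\xi_2-\xi_1)(\xi_2-\xi_3)|$ remains available. I would therefore decompose $NR(|z|^2 z)$ via the symmetry in $\xi_1$ and $\xi_3$ (reducing to $|\xi_1|\geq|\xi_3|$) followed by the same Littlewood-Paley splitting into the three frequency regimes $\{|\xi_1|\gg|\xi_2|\}$, $\{|\xi_1|\sim|\xi_2|\gg|\xi_3|\}$, and $\{|\xi_1|\sim|\xi_2|\sim|\xi_3|\}$ that gave rise to $\mathcal{B}_1,\mathcal{B}_2,\mathcal{B}_3$ in the previous lemma.

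For each of the three pieces I would run the duality-plus-Plancherel argument of Lemma \ref{non-dbp-terms} verbatim, applying estimate \eqref{l4-est} when the maximum modulation sits on one of $z(\xi_1), z(\xi_2)$, or the output, and the interpolation embedding $X^{5\delta,\frac{1}{2}-\delta}\hookrightarrow L^6_{t,x}$ when the maximum sits on $z(\xi_3)$. The decisive simplification is that the factor $2^k$ appearing in every dyadic sum of Lemma \ref{non-dbp-terms} -- which encoded the spatial derivative $\xi_2$ -- is now absent. For instance, the analogue of the bound $\sum_{j,k} 2^{j(a-1/2+\delta)} 2^{k(1-\sigma)}$ from the first maximal-modulation case is replaced here by $\sum_{j,k} 2^{j(a-1/2+\delta)} 2^{-k\sigma}$, the latter being summable in a strictly wider parameter range. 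The same improvement occurs in the case analysis for $\mathcal{B}_2$ and in the fully comparable regime (analogous to $\mathcal{B}_3$), where the condition becomes $a<2\sigma$ instead of $a<2\sigma-1$.

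Since the stated range $0<a<\min\{s-\tfrac{1}{2}-15\delta,\,\tfrac{1}{2}-15\delta\}$ is precisely what Lemma \ref{non-dbp-terms} already secures in the presence of the derivative, it certainly suffices once the derivative is dropped, and the corollary follows. I do not anticipate a substantive obstacle: since no cancellation in the symbol was being exploited in the proof of Lemma \ref{non-dbp-terms}, removing the factor $i\xi_2$ only improves every dyadic estimate. The only care needed is to verify that the symmetrization in $\xi_1,\xi_3$ and the Littlewood-Paley bookkeeping transfer cleanly without a derivative multiplier, which is a routine transcription of the previous argument.
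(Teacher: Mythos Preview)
Your approach is the one the paper intends --- the corollary is stated without proof precisely because the argument of Lemma~\ref{non-dbp-terms} transfers, with the missing factor $i\xi_2$ only making each dyadic sum easier. However, there is a gap in your decomposition: the three regimes $\{|\xi_1|\gg|\xi_2|\}$, $\{|\xi_1|\sim|\xi_2|\gg|\xi_3|\}$, $\{|\xi_1|\sim|\xi_2|\sim|\xi_3|\}$ that define $\mathcal{B}_1,\mathcal{B}_2,\mathcal{B}_3$ do \emph{not} exhaust $S_\xi$. In Section~\ref{s-dbp} there was a fourth region $\{|\xi_1|\ll|\xi_2|\}$ (still with $|\xi_1|\geq|\xi_3|$), and it was precisely there that the derivative $\xi_2$ forced the differentiation-by-parts step. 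For the full $NR(|z|^2 z)$ appearing in \eqref{gauge-eq} you must still account for this region.

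The fix is short: in $\{|\xi_1|\ll|\xi_2|\}\cap\{|\xi_1|\geq|\xi_3|\}$ one has $|\xi_2-\xi_1|\sim|\xi_2-\xi_3|\sim|\xi_2|\sim|\xi|$, so the resonance function satisfies $|(\xi_2-\xi_1)(\xi_2-\xi_3)|\sim|\xi|^2$. With no $\xi_2$ multiplier present, the same duality/$L^4$ argument you run for $\mathcal{B}_3$ (or the $L^6$ variant when the maximal modulation sits on $\xi_1$ or $\xi_3$) gives a dyadic sum of the form $\sum_j 2^{j(a-1+c\delta)}$, which converges comfortably within the stated range for $a$. Once you add this case, your argument is complete and matches the paper.
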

\begin{lemma}
\label{n11-n13}
Let $s>\frac{1}{2}$. Then, $\mathcal N_{1, 1}(z)$ and $\mathcal N_{1, 3}(z)$ defined by \eqref{N1l-def} satisfy the estimate
\eee{\label{n11-n13-est}
\no{\mathcal{N}_{1,1}(z)+\mathcal{N}_{1,3}(z)}_{X^{s+a,-\frac{1}{2}+\delta}}
\lesssim
\no{z}_{Z^{\sigma}}^4
\no{z}_{Z^s}
}
for all $\delta>0$ and $0<a<\min\{s,1\}$.
\end{lemma}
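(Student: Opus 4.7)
My plan is to establish \eqref{n11-n13-est} for $\mathcal N_{1,1}(z)$ by a dyadic duality argument in $X^{s+a,-1/2+\delta}$ and to derive the bound for $\mathcal N_{1,3}(z)$ by the same reasoning with the first and third outer factors interchanged (as noted at the end of Section \ref{s-dbp}), combining them via the triangle inequality. The starting observation is the factorization
\begin{equation*}
\Psi \;=\; 2\bigl(\xi_4-(\xi_1-\xi_2+\xi_3)\bigr)(\xi_4-\xi_5),
\end{equation*}
for which the constraint $|\xi_1-\xi_2+\xi_3|\ll|\xi_4|$ built into $N_{1,1}(\xi)$ forces the first factor to be comparable to $|\xi_4|$. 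Since $|\xi_4-\xi_5|\geq 1$ for integer frequencies, one has the universal bound $|\Psi|\gtrsim|\xi_4|$, improved to $|\Psi|\sim|\xi_4|^2$ when $|\xi_5|\ll|\xi_4|$. Consequently the multiplier satisfies the universal bound $|\xi_2\xi_4/\Psi|\lesssim|\xi_2|$, and the improved bound $|\xi_2|/|\xi_4|$ in the non-resonant range $|\xi_5|\ll|\xi_4|$.

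Applying Plancherel and duality against a test function $\varphi\in X^{0,1/2-\delta}$ with $\no{\varphi}_{X^{0,1/2-\delta}}=1$, and inserting a Littlewood--Paley projection $P_j$ on the output together with dyadic projections on each input $\widehat{z}(\xi_\ell)$, it suffices to bound a sum of dyadic quintilinear quantities with effective weight $2^{j(s+a)}$ and multiplier controlled as above. The main case is $|\xi_5|\ll|\xi_4|$, where $|\xi|\sim|\xi_4|\sim 2^j$ and the multiplier contributes the factor $|\xi_2|/2^j$. Splitting further by whether $|\xi_2|\lesssim 2^j$ or $|\xi_2|\gg 2^j$, I apply H\"older's inequality with the $L^4_{t,x}$ estimate \eqref{l4-est} on two of the five factors, the Sobolev embedding $Y^{1/2+\delta,0}\hookrightarrow L^\infty_{t,x}$ on the other two inner factors, and \eqref{l4-est} applied to the test function $(\varphi_j)^{\#}$. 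In the subcase $|\xi_2|\gg 2^j$, the condition $|\xi_1-\xi_2+\xi_3|\ll|\xi_4|\sim 2^j$ forces $|\xi_1|\sim|\xi_2|$ or $|\xi_3|\sim|\xi_2|$, creating a high-high pairing so that the surplus $\langle\xi_2\rangle$ factor is matched with a companion of equal scale and absorbed into $\no{z}_{Z^s}$.

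The main obstacle is the residual resonant subcase $|\xi_5|\sim|\xi_4|\gg|\xi|$, in which $|\Psi|$ degenerates to $\sim|\xi_4|$ and the multiplier becomes as large as $|\xi_2|$ with no gain from $1/|\xi_4|$. Here the defining condition $|\xi_1|\geq|\xi_5|$ forces $|\xi_1|\sim|\xi_4|$, so both $\widehat{z}(\xi_1)$ and $\widehat{z}(\xi_4)$ carry frequency $\sim|\xi_4|$, potentially far larger than the output scale $2^j$. The $s+a$ output derivatives are then redistributed onto $\widehat{z}(\xi_1)$ and $\widehat{z}(\xi_4)$, with the three remaining factors estimated by H\"older together with the $L^4_{t,x}$ bound \eqref{l4-est}, the $L^6_{t,x}$ bound \eqref{l6-est}, and the Sobolev embedding \eqref{bourgain-sobolev-est}. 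Summing the resulting dyadic estimates over all configurations and using $\sigma=\min\{s,1\}$ yields convergence exactly when $0<a<\sigma$, matching the claimed range.
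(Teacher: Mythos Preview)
Your main case $|\xi_5|\ll|\xi_4|$ is handled by the same dyadic duality argument as the paper: with $|\Psi|\sim|\xi_4|^2\sim|\xi|^2$ the multiplier reduces to $|\xi_2|/|\xi_4|$, and one then splits according to whether $|\xi_2|$ is below or above $|\xi_4|$, using H\"older with \eqref{l4-est} on two factors and $Y^{1/2+\delta,0}\hookrightarrow L^\infty_{t,x}$ on the rest. That part is essentially identical to the paper's proof and yields the correct range $a<\min\{s,1\}$.

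Where you diverge is in positing a ``residual resonant subcase'' $|\xi_5|\sim|\xi_4|$. This subcase is in fact \emph{empty}: the constraint inherited from $S_\xi$ after the relabeling is $|\xi_1-\xi_2+\xi_3|\geq|\xi_5|$ (the displayed condition $|\xi_1|\geq|\xi_5|$ in the definition of $N_{1,1}(\xi)$ is a typo---the outer first frequency of $S_\xi$ became $\xi_1-\xi_2+\xi_3$ upon substitution). The paper's proof states this explicitly: ``the range of summation of $\widehat{\mathcal N_{1,1}(z)}(\xi)$ implies $|\xi_5|\leq|\xi_1-\xi_2+\xi_3|\ll|\xi_4|$.'' Thus $|\xi_5|\ll|\xi_4|$ holds throughout, $|\Psi|\sim|\xi_4|^2$ always, and your residual subcase never arises. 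Your sketch for that subcase (redistributing $s+a$ derivatives onto $\widehat z(\xi_1)$ and $\widehat z(\xi_4)$ while still absorbing the loose $\langle\xi_2\rangle$) is also underspecified---for instance, if $1/2<s<1$ one would naively need $a<2s-1$, which is strictly smaller than the claimed $a<s$---so it is fortunate the case is vacuous. Once you use the correct constraint, your argument collapses to the paper's.
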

\begin{proof}
We only prove the estimate for $\mathcal{N}_{1,1}(z)$ as the estimate for $\mathcal{N}_{1,3}(z)$ can be established similarly. Note that the range of summation of $\widehat{\mathcal{N}_{1,1}(z)}(\xi)$ implies $|\xi_5|\leq |\xi_1-\xi_2+\xi_3|\ll |\xi_4|$. Thus, $|\Psi|\gtrsim |\xi_4|^2\sim |\xi|^2$. Then, using  Littlewood-Paley-type projections, duality and Plancherel's theorem, we find
\eee{
\no{\mathcal{N}_{1,1}(z)}_{X^{s+a,-\frac{1}{2}+\delta}}
\lesssim
\sum_{j,k}
2^{j(s+a-1)} 2^k
\sup_{\no{\varphi}_{X^{0,\frac{1}{2}-\delta}}=1}
\int_{\mathbb{T}\times\mathbb{R}}
(z^{\#})^3 (z_{\sim j})^{\#} (z_k)^{\#} (\varphi_j)^{\#}
~dxdt.
\nn
}

First, suppose that $|\xi_2| \ll |\xi_4|$.
From H\"older's inequality and estimate \eqref{l4-est}, we have %
\ddd{
&\quad \sum_{j,k}
2^{j(s+a-1)} 2^k
\sup_{\no{\varphi}_{X^{0,\frac{1}{2}-\delta}}=1}
\int_{\mathbb{T}\times\mathbb{R}}
(z^{\#})^3 (z_{\sim j})^{\#} (z_k)^{\#} (\varphi_j)^{\#}
~dxdt
\nn\\
&\lesssim
\sum_{j,k}
2^{j(s+a-1)} 2^k
\big\|z^{\#}\big\|_{L^{\infty}_{t,x}}^3 
\big\|(z_{\sim j})^{\#}\big\|_{L^4_{t,x}}
\big\|(z_k)^{\#}\big\|_{L^4_{t,x}}
\nn\\
&\lesssim
\sum_{j,k} 2^{j(a-1)} 2^{k(1-\sigma)}
\no{z}_{Y^{\frac{1}{2}+\delta,0}}^3
\no{z}_{X^{s,\frac{1}{2}}}
\no{z}_{X^{\sigma,\frac{1}{2}}}.
\nn
}
If $s<1$, then 
\eee{
\sum_{j,k} 2^{j(a-1)} 2^{k(1-s)}
\lesssim
\sum_j 2^{j(a-s)},
\nn
}
which converges for $a<s$. On the other hand, if $s\geq1$  then the above sum converges for $a<1$. Thus, overall we require $a<\min\{s, 1\}$.
 
If $|\xi_2|\gtrsim |\xi_4|$, then it follows that $|\xi_2|\sim |\xi_1+\xi_3| \lesssim \max\{|\xi_1|, |\xi_3|\}$. Therefore,
\ddd{
&\quad \sum_{j,k}
2^{j(s+a-1)} 2^k
\sup_{\no{\varphi}_{X^{0,\frac{1}{2}-\delta}}=1}
\int_{\mathbb{T}\times\mathbb{R}}
(z^{\#})^2 (z_{\sim k})^{\#} (z_{\sim j})^{\#} (z_k)^{\#} (\varphi_j)^{\#}
~dxdt
\nn\\
&\lesssim
\sum_{j,k} 2^{j(a-1)} 2^{k(1-2\sigma)} \no{z}_{Y^{\frac{1}{2}+\delta,0}}^2 \no{z}_{X^{\sigma,\frac{1}{2}}}^2 \no{z}_{X^{s,\frac{1}{2}}}
\nn\\
&\lesssim
\sum_j 2^{j(a-2\sigma)} 
\no{z}_{Y^{\frac{1}{2}+\delta,0}}^2 \no{z}_{X^{\sigma,\frac{1}{2}}}^2 
\no{z}_{X^{s,\frac{1}{2}}},
\nn
}
where the above sum converges for $a<2\sigma=\min\{2s, 2\}$.
\end{proof}
\begin{corollary}
\label{n23}
Let $s>\frac{1}{2}$. Then, $\mathcal N_{2, 3}(z)$ defined by \eqref{N2l-def} satisfies the estimate
\eee{\label{n23-est}
\no{\mathcal{N}_{2,3}(z)}_{X^{s+a,-\frac{1}{2}+\delta}}
\lesssim
\no{z}_{Z^{\sigma}}^4
\no{z}_{Z^s}
}
for all $\delta >0$ and $0<a<\min\{s,1\}$.
\end{corollary}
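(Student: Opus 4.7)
The plan is to mirror the proof of Lemma \ref{n11-n13} for $\mathcal{N}_{1,1}(z)$, with adjustments reflecting the slightly different structure of $\mathcal{N}_{2,3}(z)$. First, we would make the quintic nature of $\mathcal{N}_{2,3}(z)$ explicit by substituting formula \eqref{NR-def} for $\mathcal{F}_x(NR(|z|^2 z))(\xi_2)$ into the third term of \eqref{N2l-def}. Writing $\xi_2 = \xi_4-\xi_5+\xi_6$ and using $\overline{\widehat{z}(\xi_4)\overline{\widehat{z}(\xi_5)}\widehat{z}(\xi_6)} = \overline{\widehat{z}(\xi_4)}\widehat{z}(\xi_5)\overline{\widehat{z}(\xi_6)}$, this produces a fivefold sum with multiplier $\xi_2/\Psi$, over the set $N_{2,3}(\xi)$ imposing $\xi_1-\xi_2+\xi_3=\xi$, $\xi_1\neq\xi_2\neq\xi_3$, $|\xi_1|\ll|\xi_2|$, $|\xi_1|\geq |\xi_3|$, and $\xi_4\neq\xi_5\neq\xi_6$.

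The key geometric observation is that on the range $N_{2,3}(\xi)$, the constraints $|\xi_3|\leq|\xi_1|\ll|\xi_2|$ together with $\xi_1-\xi_2+\xi_3=\xi$ force $|\xi|\sim|\xi_2|$, and hence
\[
|\Psi| = 2|\xi_2-\xi_1||\xi_2-\xi_3| \sim |\xi|^2.
\]
Therefore $|\xi_2|/|\Psi| \lesssim 1/|\xi|$, so the multiplier contributes exactly one derivative's worth of smoothing. Moreover, because $\xi_2$ coincides with the maximal frequency among $\xi_4,\xi_5,\xi_6$ (up to comparable size), one of these three factors is of order $|\xi|$ as well.

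We would then apply Littlewood-Paley projections, duality, and Plancherel's theorem, pairing against $\varphi \in X^{0, 1/2-\delta}$ with $\|\varphi\|_{X^{0,1/2-\delta}}=1$. Letting $2^j\sim|\xi|\sim|\xi_2|$ and $2^k$ denote the scale of the highest frequency among $\{\xi_4,\xi_5,\xi_6\}$, and distinguishing the subcase where only one of these is $\sim 2^j$ (the other four factors are low-frequency) from the subcase where two of them are comparable to $2^j$, we would invoke H\"older's inequality placing the low-frequency factors in $L^\infty_{t,x}$ via the embedding $Y^{1/2+\delta,0}\hookrightarrow L^\infty_{t,x}$ and the high-frequency factors in $L^4_{t,x}$ via Bourgain's estimate \eqref{l4-est}. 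This should yield a geometric dyadic sum of the shape
\[
\sum_{j,k}2^{j(s+a-1)}2^{k(1-\sigma)}\|z\|_{Z^{1/2+\delta}}^{3}\|z\|_{X^{\sigma,1/2}}\|z\|_{X^{s,1/2}},
\]
respectively its two-high-frequency variant
\[
\sum_{j}2^{j(a-2\sigma)}\|z\|_{Z^{1/2+\delta}}^{2}\|z\|_{X^{\sigma,1/2}}^{2}\|z\|_{X^{s,1/2}},
\]
both of which converge precisely on the range $0<a<\min\{s,1\}$ after splitting into $s<1$ and $s\geq 1$.

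The main technical issue is purely combinatorial: unlike the analogous computation for $\mathcal{N}_{1,1}(z)$, where the substitution of $NR(-w^2\overline{w}_x)$ contributes an extra derivative factor $\xi_4$ from $\overline{w}_x$, here the substitution of $NR(|z|^2z)$ contributes no derivative, so the multiplier is merely $\xi_2/\Psi$ rather than $\xi_2\xi_4/\Psi$. Consequently the estimate is in fact slightly more favorable than that of $\mathcal{N}_{1,1}(z)$, and the same final range $a<\min\{s,1\}$ emerges with room to spare. The only point requiring care is verifying that the frequency $\xi_2$ dominates $|\xi_1|,|\xi_3|$ (so that $|\xi|\sim|\xi_2|$ and the multiplier gain materializes), which follows directly from the condition $|\xi_1|\ll|\xi_2|$ built into $S_\xi$.
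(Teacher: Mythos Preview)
Your proposal is correct and matches the paper's approach: the corollary is stated without proof immediately after Lemma~\ref{n11-n13} because it follows by the same argument, and your observation that the multiplier here is only $\xi_2/\Psi$ (with no extra derivative contributed by the substituted $NR(|z|^2 z)$) is precisely why the estimate is no harder than for $\mathcal{N}_{1,1}$. The secondary dyadic scale $k$ you introduce is not actually needed --- since there is no extra factor of a frequency in the numerator, a single sum over $j$ with four low-frequency factors in $L^\infty_{t,x}$ already converges for $a<1$ --- but this over-tracking is harmless.
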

\begin{lemma}
\label{n12}
Let $s>\frac{1}{2}$. Then, $\mathcal N_{1, 2}(z)$ defined by \eqref{N1l-def} satisfies the estimate 
\eee{\label{n12-est}
\no{\mathcal{N}_{1,2}(z)}_{X^{s+a,-\frac{1}{2}+\delta}}
\lesssim
\no{z}_{Y^{\frac{1}{2}+\delta,0}}^6 \no{z}_{X^{s,\frac{1}{2}}}
}
for all $\delta>0$ and $0<a<1$.
\end{lemma}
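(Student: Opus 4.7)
The plan is to follow the dyadic duality template of Lemmas \ref{non-dbp-terms} and \ref{n11-n13}, exploiting the fact that on the support $N_{1,2}(\xi)$ the variable $\xi_6$ dominates all other frequencies. The key preliminary observation is that $\Psi = 2\bigl(\xi_6-(\xi_1-\xi_2+\xi_3-\xi_4+\xi_5)\bigr)(\xi_6-\xi_7)$. The condition $|\xi_1-\xi_2+\xi_3-\xi_4+\xi_5|\ll|\xi_6|$ already handles the first factor. For the second, the ordering $|\xi_7|\leq|\xi_1-\xi_2+\xi_3-\xi_4+\xi_5|$ inherited from the $|\xi_1^{\mathrm{old}}|\geq|\xi_3^{\mathrm{old}}|$ condition in $S_\xi$ forces $|\xi_7|\ll|\xi_6|$, so $|\xi_6-\xi_7|\sim|\xi_6|$. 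Consequently $|\Psi|\sim|\xi_6|^2$ and, since $\xi=\xi_1-\xi_2+\xi_3-\xi_4+\xi_5-\xi_6+\xi_7$ with the first five-term combination and $\xi_7$ all much smaller than $\xi_6$, one has $|\xi|\sim|\xi_6|$. Therefore the multiplier satisfies $|\xi_6/\Psi|\sim |\xi|^{-1}$, providing a clean one-derivative gain at the output frequency.

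Second, I would dualize $X^{s+a,-\frac{1}{2}+\delta}$ against $\varphi\in X^{0,\frac{1}{2}-\delta}$ with $\no{\varphi}_{X^{0,\frac{1}{2}-\delta}}=1$, decompose $|\xi|\sim|\xi_6|\sim 2^j$ using the Littlewood-Paley projectors $P_{\sim j}$, and reduce the problem (after Plancherel) to bounding
\[
\sum_j 2^{j(s+a-1)} \int_{\mathbb{T}\times\mathbb{R}} \bigl(z^{\#}\bigr)^{6}\,(z_{\sim j})^{\#}\,(\varphi_j)^{\#}\, dxdt,
\]
where the $2^{j(s+a)}$ accounts for the Bessel weight on the output and the $2^{-j}$ comes from the multiplier gain just established. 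Applying H\"older with the six low-frequency copies of $z^{\#}$ in $L^\infty_{t,x}$ and with $(z_{\sim j})^{\#}$, $(\varphi_j)^{\#}$ in $L^2_{t,x}$, I would then invoke the embeddings $\no{z^{\#}}_{L^\infty_{t,x}}=\no{z}_{L^\infty_{t,x}}\lesssim \no{z}_{L^\infty_t H^{\frac{1}{2}+\delta}_x}\leq \no{z}_{Y^{\frac{1}{2}+\delta,0}}$ (Sobolev on $\mathbb{T}$ plus the definition of $Y$), together with $\no{(z_{\sim j})^{\#}}_{L^2_{t,x}}=\no{z_{\sim j}}_{X^{0,0}}\leq \no{z_{\sim j}}_{X^{0,\frac{1}{2}}}\lesssim 2^{-js}\no{z}_{X^{s,\frac{1}{2}}}$ and $\no{(\varphi_j)^{\#}}_{L^2_{t,x}}\leq \no{\varphi_j}_{X^{0,\frac{1}{2}-\delta}}\leq 1$.

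Combining these bounds produces
\[
\no{\mathcal{N}_{1,2}(z)}_{X^{s+a,-\frac{1}{2}+\delta}} \lesssim \sum_j 2^{j(a-1)}\, \no{z}_{Y^{\frac{1}{2}+\delta,0}}^6\, \no{z}_{X^{s,\frac{1}{2}}},
\]
and the geometric series on the right converges precisely for $0<a<1$, yielding \eqref{n12-est}. The main obstacle in this argument is really just verifying $|\Psi|\sim|\xi|^2$ on $N_{1,2}(\xi)$; once the transitive chain $|\xi_7|\leq|\xi_1-\xi_2+\xi_3-\xi_4+\xi_5|\ll|\xi_6|$ is recorded, the one-derivative gain delivered by $\xi_6/\Psi$ is exactly what is needed to cancel the derivative loss inherited from the quintic $\mathcal{A}(z)$ substitution and to allow all six low-frequency slots to be absorbed by the scale-invariant $Y^{\frac{1}{2}+\delta,0}$ norm. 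The absence of any summation in $k$ (a low-frequency dyadic index) reflects this: the gain is entirely at the scale of $\xi$ itself, and no resonant interaction remains to be resolved.
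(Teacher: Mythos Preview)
Your proof is correct and follows essentially the same approach as the paper. You have provided more detail on the key frequency relation $|\Psi|\sim|\xi_6|^2\sim|\xi|^2$ (in particular by explicitly recording the chain $|\xi_7|\leq|\xi_1-\xi_2+\xi_3-\xi_4+\xi_5|\ll|\xi_6|$ inherited from $S_\xi$), but the dyadic duality reduction, the H\"older split placing six copies of $z^{\#}$ in $L^\infty_{t,x}$ and the remaining factors in $L^2_{t,x}$, and the resulting geometric sum $\sum_j 2^{j(a-1)}$ are exactly the paper's argument.
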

\begin{proof}
As in the proof of Lemma \ref{n11-n13}, we have that $|\Psi|\gtrsim |\xi_6|^2\sim |\xi|^2$ and so applying Littlewood-Paley-type projections, duality, Plancherel's theorem  and H\"older's inequality we obtain
\ddd{
\no{\mathcal{N}_{1,2}(z)}_{X^{s+a,-\frac{1}{2}+\delta}}
&\lesssim
\sum_j 2^{j(s+a-1)} 
\sup_{\no{\varphi}_{X^{0,\frac{1}{2}-\delta}}=1}
\int_{\mathbb{T}\times\mathbb{R}}
(z^{\#})^6 (z_{\sim j})^{\#} (\varphi_j)^{\#}
~dxdt
\nn\\
&\lesssim
\sum_j 2^{j(a-1)}
\no{z}_{Y^{\frac{1}{2}+\delta,0}}^6 \no{z}_{X^{s,\frac{1}{2}}},
\nn
}
where the sum converges for $a<1$.
\end{proof}
\begin{corollary}
\label{n22}
Let $s>\frac{1}{2}$. Then, $\mathcal N_{2, 2}(z)$ defined by \eqref{N2l-def} satisfies the estimate 
\eee{\label{n22-est}
\no{\mathcal{N}_{2,2}(z)}_{X^{s+a,-\frac{1}{2}+\delta}}
\lesssim
\no{z}_{Y^{\frac{1}{2}+\delta,0}}^6 \no{z}_{X^{s,\frac{1}{2}}}
}
for all $\delta>0$ and $0<a<1$.
\end{corollary}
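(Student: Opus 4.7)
The plan is to mirror the argument of Lemma \ref{n12} almost verbatim, since $\mathcal{N}_{2,2}(z)$ is structurally analogous to $\mathcal{N}_{1,2}(z)$: both are septilinear in $z$ with a multiplier of the form (one frequency)$/\Psi$, and both arise by substituting the quintic term $\mathcal{A}(z)$ into one of the cubic prefactors of $\mathcal{N}_2(z)$ (respectively $\mathcal{N}_1(z)$). So the first step is to isolate the denominator gain, and the second step is to run Hölder with the Bourgain embeddings.

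First, I would verify the multiplier estimate $|\Psi|\gtrsim |\xi|^2$ on $N_{2,2}(\xi)$. Recall that $\mathcal{N}_{2,2}(z)$ comes from differentiating in time the second factor of $\mathcal{N}_2(z)$ and substituting the $\mathcal{A}(z)$ contribution for $\widehat{\mathcal{A}(z)}(\xi_2)$; writing $A:=\xi_2-\xi_3+\xi_4-\xi_5+\xi_6$ for the new middle index, one has
\eee{
\Psi = 2(A-\xi_1)(A-\xi_7). \nn
}
The defining constraints of $N_{2,2}(\xi)$ give $|\xi_1|\ll |A|$ together with $|\xi_7|\leqslant |\xi_1|\ll |A|$, so $|\Psi|\gtrsim |A|^2$. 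Since $\xi_1-\xi_2+\xi_3-\xi_4+\xi_5-\xi_6+\xi_7 = \xi$, we have $\xi = \xi_1+\xi_7 - A$, and therefore $|A|\sim |\xi|$. Hence $|\Psi|\gtrsim |\xi|^2$, and the multiplier satisfies $|A|/|\Psi|\lesssim |\xi|^{-1}$.

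With this gain in place, I would apply a Littlewood-Paley projection $P_j$ on the output frequency $\xi$, then use duality and Plancherel to reduce to bounding
\eee{
\sum_j 2^{j(s+a-1)} \sup_{\no{\varphi}_{X^{0,\frac{1}{2}-\delta}}=1} \int_{\mathbb T\times\mathbb R} (z^\#)^6 (z_{\sim j})^\# (\varphi_j)^\# \, dx\,dt, \nn
}
exactly as in the proof of Lemma \ref{n12}. Hölder's inequality places six copies of $z^\#$ in $L^\infty_{t,x}$ and the remaining two factors in $L^4_{t,x}$. The former is controlled via the Sobolev embedding by $\no{z}_{Y^{1/2+\delta,0}}^6$, while the latter uses the $X^{0,3/8+\delta}\hookrightarrow L^4_{t,x}$ estimate \eqref{l4-est} to give $\no{z_{\sim j}}_{X^{s,1/2}}\no{\varphi_j}_{X^{0,1/2-\delta}}$ after absorbing the $2^{js}$ factor. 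The resulting sum
\eee{
\sum_j 2^{j(a-1)} \no{z}_{Y^{\frac{1}{2}+\delta,0}}^6 \no{z}_{X^{s,\frac{1}{2}}} \nn
}
converges for any $a<1$, yielding \eqref{n22-est}.

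The only mild obstacle I anticipate is checking the algebra for $\Psi$ carefully when transcribing from the cubic-prefactor case of $\mathcal{N}_{2,1}$ to the quintic-prefactor case of $\mathcal{N}_{2,2}$, particularly confirming that $|\xi_7|\leqslant |\xi_1|$ (rather than $|\xi_7|\leqslant |A|$ directly) is enough to conclude $|\xi_7|\ll |A|$; this is immediate from $|\xi_1|\ll |A|$ and the chain $|\xi_7|\leqslant|\xi_1|\ll|A|$ built into $N_{2,2}(\xi)$. The remaining non-resonance conditions $\xi_3+\xi_5\neq \xi_2+\xi_4,\xi_2+\xi_6,\xi_4+\xi_6$ are inherited from $\mathcal{A}(z)$ but are not used in the estimate; they simply ensure that the representation of $\mathcal{N}_{2,2}(z)$ is meaningful. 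Once the multiplier bound is established the whole argument is a straightforward transcription of the proof of Lemma \ref{n12}, which is why it is stated as a corollary rather than a separate lemma.
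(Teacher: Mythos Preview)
Your proposal is correct and follows exactly the approach the paper intends---indeed, the paper states Corollary \ref{n22} without proof precisely because it is a direct transcription of Lemma \ref{n12}. One small refinement: unlike the $\mathcal{N}_{1,2}$ case where $|\xi_6|\sim|\xi|$ singles out a specific factor at frequency $\sim 2^j$, here only the combination $A=\xi_2-\xi_3+\xi_4-\xi_5+\xi_6$ has size $\sim 2^j$, so the factor carrying the $H^s$-weight should be whichever of $\xi_2,\dots,\xi_6$ is largest (by pigeonhole it is $\gtrsim 2^j$); your H\"older step then goes through unchanged.
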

\begin{lemma}
\label{n21-star}
Let $s>\frac{1}{2}$. Then, $\mathcal N_{2, 1}^*(z)$ defined by \eqref{N2l*-def} satisfies the estimate 
\ddd{
\label{n21-star-est}
\no{\mathcal{N}_{2,1}^*(z)}_{X^{s+a,-\frac{1}{2}+\delta}}
&\lesssim
\no{z}_{Z^{\sigma}}^4
\no{z}_{Z^s}
}
for all $\delta>0$ and $0<a<\frac{1}{2}$.
\end{lemma}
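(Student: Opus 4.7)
The plan follows the template of Lemma~\ref{n11-n13}: extract a multiplier bound from $|\Psi|$ and then close via duality together with Bourgain-space $L^p$ embeddings. In the summation region $N_{2,1}(\xi)\cap\{\xi_2+\xi_4\neq\xi_1+\xi_5\}$, the constraints $|\xi_1|,|\xi_5|\ll|\xi_2-\xi_3+\xi_4|$ combined with the identity $\xi_2-\xi_3+\xi_4=\xi_1+\xi_5-\xi$ yield $|\xi_2-\xi_3+\xi_4|\sim|\xi|$, and hence $|\Psi|=2|\xi_2-\xi_3+\xi_4-\xi_1|\cdot|\xi_2-\xi_3+\xi_4-\xi_5|\sim|\xi|^2$. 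Consequently the multiplier satisfies $|(\xi_2-\xi_3+\xi_4)\xi_3/\Psi|\sim|\xi_3|/|\xi|$, so after extracting the weight $\langle\xi\rangle^{s+a}$ the effective symbol driving the estimate is $\langle\xi\rangle^{s+a-1}|\xi_3|$. Note also the simplification $\Psi=2(\xi-\xi_1)(\xi-\xi_5)$ coming from the same identity.

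Next, by duality (testing against $\varphi\in X^{0,1/2-\delta}$ of unit norm), Plancherel, and Littlewood-Paley localization at $|\xi|\sim 2^j$, the estimate reduces to summability in $j$ of a dyadic $5$-linear spacetime form. I would then case-split according to the size of $|\xi_3|$ relative to $|\xi|$. When $|\xi_3|\ll|\xi|$, the multiplier provides genuine smoothing, and, proceeding as in Lemma~\ref{n11-n13}, any factor at frequency $\sim|\xi|$ (necessarily among $\widehat{z}(\xi_2),\widehat{z}(\xi_4)$ since $\xi_1,\xi_5$ are small) is placed in $L^4_{t,x}$ via Bourgain's inequality~\eqref{l4-est} while the remaining low-frequency factors are controlled in $L^\infty_{t,x}$ through the Sobolev embedding $Y^{1/2+\delta,0}\hookrightarrow L^\infty_{t,x}$. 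When $|\xi_3|\sim|\xi|$, I would assign $|\xi_3|^s$ to $\widehat{z}(\xi_3)$ (capturing the $H^s$ weight) and distribute the residual $|\xi|^a$ among the remaining factors, using $|\xi|^a\leq|\xi_j|^{1/2-\delta}$ on whichever of $\widehat{z}(\xi_2),\widehat{z}(\xi_4)$ matches the scale $|\xi|$; in the sub-case where no other factor reaches that scale, a Peetre-type splitting $|\xi|^a\lesssim|\xi_3|^a+|\xi-\xi_3|^a$ together with the identity $\xi-\xi_3=\xi_2+\xi_4-\xi_1-\xi_5$ transfers the leftover derivative onto the low-frequency factors, which are then absorbed into their $H^{1/2+\delta}$ norms.

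The main obstacle, and the origin of the restriction $a<\tfrac{1}{2}$, is the case in which two factors—namely $\widehat{z}(\xi_3)$ together with one of $\widehat{z}(\xi_2)$ or $\widehat{z}(\xi_4)$—both reside at the high frequency $|\xi|$: here the multiplier gives no gain, and the leftover weight $|\xi|^a$ must be absorbed into a Sobolev norm strictly weaker than $H^s$, forcing $a<\tfrac{1}{2}-\delta$. This is exactly the threshold stated in the lemma, and it propagates to the upper bound $\tfrac{1}{2}-\varepsilon$ on the nonlinear smoothing exponent in Theorem~\ref{dbp-wp}, identifying $\mathcal{N}_{2,1}^*$ as the term responsible for the tightest constraint in the whole scheme.
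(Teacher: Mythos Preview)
Your reduction via $|\Psi|\sim|\xi|^2$ to the effective weight $\langle\xi\rangle^{s+a-1}|\xi_3|$ and the duality/Littlewood--Paley setup are correct. The gap is in the regime $|\xi_3|\sim|\xi|$ with \emph{all} remaining frequencies $|\xi_1|,|\xi_2|,|\xi_4|,|\xi_5|\ll|\xi|$, which is perfectly consistent with $|\xi_2-\xi_3+\xi_4|\sim|\xi|$. Here your Peetre splitting $|\xi|^a\lesssim|\xi_3|^a+|\xi-\xi_3|^a$ does not close: the piece $|\xi_3|^a$ lands on $\widehat z(\xi_3)$, which already carries the full $|\xi_3|^s$, so absorbing it would require $\|z\|_{Z^{s+a}}$ on the right-hand side; and $|\xi-\xi_3|^a=|(\xi_1+\xi_5)-(\xi_2+\xi_4)|^a\ll|\xi|^a$ in this regime, so transferring it to the low-frequency factors does not account for the full weight. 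With only the $\Psi$-gain and Sobolev redistribution, this case leaves an unsummable $2^{ja}$. Your bottleneck identification is also inverted: when \emph{two} factors sit at scale $|\xi|$ one of them can be placed in $X^{\sigma,\frac12}$, yielding $a<\sigma=\min\{s,1\}>\tfrac12$, a weaker constraint than $a<\tfrac12$.

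The paper closes precisely the case you miss---$|\xi_3|$ dominating the other four frequencies (their regions $R_1,R_2$)---by invoking, on top of the $\Psi$-bound, the full quintic modulation identity as in Lemma~\ref{nonres-quintic}. The non-resonance constraints inherited from $N_{2,1}(\xi)$ together with $\xi_2+\xi_4\neq\xi_1+\xi_5$ force $\max_{m}|\tau_m+\xi_m^2|\gtrsim|\Phi|\gtrsim|\xi_3|$, and the half-power of modulation available from one factor in $X^{\cdot,\frac12}$ (or from the dual $X^{0,\frac12-\delta}$) supplies an extra $|\xi_3|^{-1/2}$. This converts the dyadic weight to $2^{j(s+a-1)}2^{k(\frac12-s)}$, which after placing $H^s$ on $\widehat z(\xi_3)$ sums to $\sum_j 2^{j(a-\frac12)}$ and gives exactly $a<\tfrac12$. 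Without this modulation step the lemma does not close.
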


\begin{proof}
Employing Littlewood-Paley-type projections, duality  and Plancherel's theorem, we find
\ddd{
\no{\mathcal{N}_{2,1}^*(z)}_{X^{s+a,-\frac{1}{2}+\delta}}
&\lesssim
\sum_{j,k} 2^{j(s+a-1)} 2^k 
\sup_{\no{\varphi}_{X^{0,\frac{1}{2}-\delta}}=1}
\sum_{\ell=1}^4
R_{\ell},
\nn}
where
\eee{
R_1
=
\sum_{\substack{N_{2,1}(\xi) \\ \xi_2+\xi_4\neq \xi_1+\xi_5 \\ |\xi_3|\gg |\xi_2^*|^2}}
\underset{\tau_1-\tau_2+\tau_3-\tau_4+\tau_5=\tau}{\int}
|\widetilde{z}(\xi_1,\tau_1)|
|\widetilde{z}(\xi_2,\tau_2)|
|\widetilde{z_k}(\xi_3,\tau_3)|
|\widetilde{z}(\xi_4,\tau_4)|
|\widetilde{z}(\xi_5,\tau_5)|
|\varphi_j(\xi,\tau)|
~d\sigma,
\nn
}
\eee{
R_2
=
\sum_{\substack{N_{2,1}(\xi) \\ \xi_2+\xi_4\neq \xi_1+\xi_5 \\ |\xi_3^*|^2\ll |\xi_3|\lesssim |\xi_2^*|^2 \\ |\xi_3|\gg |\xi_2^*|}}
\underset{\tau_1-\tau_2+\tau_3-\tau_4+\tau_5=\tau}{\int}
|\widetilde{z}(\xi_1,\tau_1)|
|\widetilde{z}(\xi_2,\tau_2)|
|\widetilde{z_k}(\xi_3,\tau_3)|
|\widetilde{z}(\xi_4,\tau_4)|
|\widetilde{z}(\xi_5,\tau_5)|
|\varphi_j(\xi,\tau)|
~d\sigma,
\nn
}
\eee{
R_3
=
\sum_{\substack{N_{2,1}(\xi) \\ \xi_2+\xi_4\neq \xi_1+\xi_5 \\ |\xi_3|\sim |\xi_2^*|^2, |\xi_3^*|^2 \\ |\xi_3|\gg |\xi_2^*|}}
\underset{\tau_1-\tau_2+\tau_3-\tau_4+\tau_5=\tau}{\int}
|\widetilde{z}(\xi_1,\tau_1)|
|\widetilde{z}(\xi_2,\tau_2)|
|\widetilde{z_k}(\xi_3,\tau_3)|
|\widetilde{z}(\xi_4,\tau_4)|
|\widetilde{z}(\xi_5,\tau_5)|
|\varphi_j(\xi,\tau)|
~d\sigma,
\nn
}
\eee{
R_4
=
\sum_{\substack{N_{2,1}(\xi) \\ \xi_2+\xi_4\neq \xi_1+\xi_5 \\ |\xi_3|\lesssim |\xi_2^*|}}
\underset{\tau_1-\tau_2+\tau_3-\tau_4+\tau_5=\tau}{\int}
|\widetilde{z}(\xi_1,\tau_1)|
|\widetilde{z}(\xi_2,\tau_2)|
|\widetilde{z_k}(\xi_3,\tau_3)|
|\widetilde{z}(\xi_4,\tau_4)|
|\widetilde{z}(\xi_5,\tau_5)|
|\varphi_j(\xi,\tau)|
~d\sigma.
\nn
}
We recall that in the case of $R_1$ and $R_2$ we have $\max\left\{\left|\tau_m +\xi_m^2\right|\right\}_{m=0}^5 \gtrsim |\xi_3|$. Therefore,   H\"older's inequality, the Sobolev embedding  and estimate \eqref{l4-est} yield
\ddd{
\sum_{j,k} 2^{j(s+a-1)} 2^k 
\sup_{\no{\varphi}_{X^{0,\frac{1}{2}-\delta}}=1}
(R_1+R_2)
&\lesssim
\sum_{j,k} 2^{j(s+a-1)} 2^{k(\frac{1}{2}-s)}
\no{z}_{Z^{\sigma}}^4
\no{z}_{Z^s}
\nn\\
&\lesssim
\sum_j 2^{j(a-\frac{1}{2})} 
\no{z}_{Z^{\sigma}}^4
\no{z}_{Z^s},
\nn
}
where the above sum is finite provided that $a<\frac{1}{2}$.

Regarding $R_3$, we note that there exist $\ell, m\neq 3$ so that $|\xi_{\ell}|^2 \sim |\xi_m|^2 \sim |\xi_3|$. Finally, in the case of $R_4$  there exists $\ell\neq 3$ such that $|\xi_{\ell}|\gtrsim 2^k$. Employing once again  H\"older's inequality, the Sobolev embedding and estimate \eqref{l4-est}, we deduce
\ddd{
\sum_{j,k} 2^{j(s+a-1)} 2^k 
\sup_{\no{\varphi}_{X^{0,\frac{1}{2}-\delta}}=1}
(R_3+R_4)
&\lesssim
\sum_{j,k}
2^{j(s+a-1)} 2^{k(1-s-\sigma)}
\no{z}_{Z^{\sigma}}^4
\no{z}_{Z^s}
\nn\\
&\lesssim
\sum_j 2^{j(a-\sigma)}
\no{z}_{Z^{\sigma}}^4
\no{z}_{Z^s} 
\nn
}
and the right-hand side is finite for $a<\sigma=\min\{s,1\}$.
\end{proof}
\begin{lemma}
\label{n22-star}
Let $s>\frac{1}{2}$. Then, $\mathcal N_{2, 2}^*(z)$ defined by \eqref{N2l*-def} satisfies the estimate 
\eee{
\label{n22-star-est}
\no{\mathcal{N}_{2,2}^*(z)}_{X^{s+a,-\frac{1}{2}+\delta}}
\lesssim
\no{z}_{Z^{\sigma}}^4
\no{z}_{Z^s}
}
for all $\delta>0$ and $0<a<\min\{2s, 2\}$.
\end{lemma}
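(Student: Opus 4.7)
The strategy parallels that of Lemma \ref{n21-star}: exploit the factored structure of $\mathcal{N}_{2,2}^*(z)$, establish a favorable kernel bound, and then apply Littlewood-Paley decomposition, duality in $X^{s+a,-\frac12+\delta}$, H\"older's inequality, the $L^4$ Strichartz estimate \eqref{l4-est}, and Sobolev embedding into $L^\infty$ via $Y^{\frac12+\delta,0}$.

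\textbf{Step 1 (Kernel bound).} From \eqref{N2l*-def}, the constraints $\xi_1-\xi_2+\xi_3-\xi_4+\xi_5=\xi$ and $\xi_2+\xi_4=\xi_1+\xi_5$ force $\xi_3=\xi$, so that the factor $\widehat z(\xi)$ appears at the output frequency and pairs naturally with the dual test function. The remaining conditions $|\xi_5|\leq|\xi_1|$ and $|\xi_1|\ll|\xi-\xi_1-\xi_5|$ give $|\xi_1+\xi_5|\lesssim|\xi_1|$, which forces $|\xi|\gg|\xi_1|\geq|\xi_5|$, whence $|\xi-\xi_1|\sim|\xi-\xi_5|\sim|\xi|$ and
\[
\left|\frac{\xi_1\xi_5}{(\xi-\xi_1)(\xi-\xi_5)}\right| \lesssim \frac{|\xi_1||\xi_5|}{|\xi|^2}.
\]
This $|\xi|^{-2}$ gain, together with the $|\xi_1||\xi_5|$ numerator, is the source of the smoothing parameter~$a$.

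\textbf{Step 2 (Dyadic decomposition + duality).} With Littlewood-Paley scales $|\xi|\sim 2^j$, $|\xi_i|\sim 2^{k_i}$ (so $k_5\leq k_1\ll j$), by Plancherel and duality against a test function $\varphi$ with $\|\varphi\|_{X^{-(s+a),\frac12-\delta}}=1$, one obtains
\[
\|\mathcal{N}_{2,2}^*(z)\|_{X^{s+a,-\frac12+\delta}}\lesssim \sum_{j,k_1,k_2,k_4,k_5} 2^{j(s+a-2)+k_1+k_5}\left|\int (z_{k_1})^{\#}\overline{(z_{k_2})^{\#}}(z_j)^{\#}\overline{(z_{k_4})^{\#}}(z_{k_5})^{\#}(\varphi_j)^{\#}\,dx\,dt\right|,
\]
where $(z_j)^{\#}$ records the identification $\xi_3=\xi$ and $(\varphi_j)^{\#}$ sits at the output frequency.

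\textbf{Step 3 (H\"older and Strichartz).} Apply H\"older with the schedule $L^4\cdot L^\infty\cdot L^4\cdot L^\infty\cdot L^4\cdot L^4$: place $(z_{k_1})^{\#},(z_{k_5})^{\#},(z_j)^{\#},(\varphi_j)^{\#}$ in $L^4_{t,x}$ via \eqref{l4-est}, and $(z_{k_2})^{\#},(z_{k_4})^{\#}$ in $L^\infty_{t,x}$ via the Sobolev embedding $Y^{\frac12+\delta,0}\hookrightarrow L^\infty_{t,x}$. Using the standard Littlewood-Paley bounds
\[
\|z_k\|_{X^{0,\frac12}}\lesssim 2^{-k\sigma}\|z\|_{Z^{\sigma}},\qquad \|z_k\|_{Y^{\frac12+\delta,0}}\lesssim 2^{-k(\sigma-\frac12-\delta)}\|z\|_{Z^{\sigma}},
\]
and $\|z_j\|_{X^{0,\frac12}}\lesssim 2^{-js}\|z\|_{Z^s}$, the sextilinear integral is controlled by $2^{-k_1\sigma-k_5\sigma-(k_2+k_4)(\sigma-\frac12-\delta)-js}\|z\|_{Z^{\sigma}}^4\|z\|_{Z^s}$ (times $\|\varphi_j\|_{X^{0,\frac12-\delta}}$, which sums out via orthogonality).

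\textbf{Step 4 (Summation).} Combining with the prefactor $2^{j(s+a-2)+k_1+k_5}$ yields
\[
\sum_{j,k_1,k_2,k_4,k_5} 2^{j(a-2)+k_1(1-\sigma)+k_5(1-\sigma)-(k_2+k_4)(\sigma-\frac12-\delta)}\|z\|_{Z^{\sigma}}^4\|z\|_{Z^s}.
\]
Since $\sigma>\frac12+\delta$ for small $\delta$, the sums in $k_2,k_4$ converge to a constant. The sum over $k_5\leq k_1\ll j$ contributes $2^{2j(1-\sigma)}$ when $\sigma<1$ and a logarithmic factor when $\sigma=1$. The outer sum in $j$ then converges provided $a<2\sigma=\min\{2s,2\}$, yielding \eqref{n22-star-est}.

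\textbf{Main obstacle.} The delicate point is the kernel estimate in Step 1: one must use both constraints $|\xi_5|\leq|\xi_1|$ and $|\xi_1|\ll|\xi-\xi_1-\xi_5|$ to genuinely extract $|\xi|\gg|\xi_1|,|\xi_5|$, which is what produces the $|\xi|^{-2}$ smoothing gain. Any weaker kernel bound would fall short of $a<\min\{2s,2\}$, and the proof would recover only the milder range already obtained for $\mathcal N_{2,1}^*(z)$ in Lemma~\ref{n21-star}.
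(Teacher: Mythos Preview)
Your proof is correct and follows essentially the same route as the paper: the kernel bound $|\xi_1\xi_5|/|\xi|^2$ in Step~1, the dyadic setup and duality in Step~2, the H\"older schedule placing two factors in $L^\infty_{t,x}$ via $Y^{\frac12+\delta,0}$ and four in $L^4_{t,x}$ via \eqref{l4-est}, and the final summation yielding $a<2\sigma$ all match the paper's argument. The only cosmetic difference is that you additionally decompose $\xi_2,\xi_4$ into scales $k_2,k_4$ and then sum these out using $\sigma>\tfrac12+\delta$, whereas the paper simply bounds the corresponding undecomposed factors by $\|z\|_{Y^{\frac12+\delta,0}}$ directly; both give the same result.
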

\begin{proof}
First, note that $\Psi=(\xi-\xi_1)(\xi-\xi_5)=(\xi-(\xi_1+\xi_5)+\xi_5)(\xi-(\xi_1+\xi_5)+\xi_1)$. Since $|\xi-(\xi_1+\xi_5)|\gg |\xi_1|, |\xi_5|$, we have that $|\Psi|\gtrsim |\xi-(\xi_1+\xi_5)|^2\sim |\xi|^2$.
By applying Littlewood-Paley-type projections, duality  and Plancherel's theorem, we have
\eee{
\no{\mathcal{N}_{2,2}^*(z)}_{X^{s+a,-\frac{1}{2}+\delta}}
\lesssim
\sum_{j,k,\ell}
2^{j(s+a-2)} 2^k 2^{\ell}
\sup_{\no{\varphi}_{X^{0,\frac{1}{2}-\delta}}=1}
R^*,
\nn
}
where 
\eee{
R^*
=
\sum_{\substack{N_{2,1}(\xi) \\ \xi_2+\xi_4= \xi_1+\xi_3}}
\underset{\tau_1-\tau_2+\tau_3-\tau_4+\tau_5=\tau}{\int}
|\widetilde{z_k}(\xi_1,\tau_1)|
|\widetilde{z}(\xi_2,\tau_2)|
|\widetilde{z_{\ell}}(\xi_3,\tau_3)|
|\widetilde{z}(\xi_4,\tau_4)|
|\widetilde{z_j}(\xi,\tau_5)|
|\widetilde{\varphi_j}(\xi,\tau)|
~d\sigma.
\nn
}
Noting that
\ddd{
R^*
&\leq
\!
\sum_{\xi_1-\xi_2+\xi_3-\xi_4+\xi_5=\xi}
\underset{\tau_1-\tau_2+\tau_3-\tau_4+\tau_5=\tau}{\int}
\!\!\!\!\!
|\widetilde{z_k}(\xi_1,\tau_1)|
|\widetilde{z}(\xi_2,\tau_2)|
|\widetilde{z_{\ell}}(\xi_3,\tau_3)|
|\widetilde{z}(\xi_4,\tau_4)|
|\widetilde{z_j}(\xi_5,\tau_5)|
|\widetilde{\varphi_j}(\xi,\tau)|
 d\sigma
\nn\\
&\simeq
\int_{\mathbb{T}\times\mathbb{R}}
(z^{\#})^2 (z_j)^{\#} (z_k)^{\#} (z_{\ell})^{\#}
(\varphi_j)^{\#}
~dxdt
\nn
}
and applying H\"older's inequality, the Sobolev embedding and estimate \eqref{l4-est}, we obtain
\ddd{
\no{\mathcal{N}_{2,2}^*(z)}_{X^{s+a,-\frac{1}{2}+\delta}}
&\lesssim
\sum_{j,k,\ell}
2^{j(s+a-2)} 2^k 2^{\ell}
\sup_{\no{\varphi}_{X^{0,\frac{1}{2}-\delta}}=1}
\int_{\mathbb{T}\times\mathbb{R}}
(z^{\#})^2 (z_j)^{\#} (z_k)^{\#} (z_{\ell})^{\#}
(\varphi_j)^{\#}
~dxdt
\nn\\
&\lesssim
\sum_{j,k,\ell} 2^{j(a-2)} 2^{k(1-\sigma)} 2^{\ell(1-\sigma)}
\no{z}_{Y^{\frac{1}{2}+\delta,0}}^2 
\no{z}_{X^{\sigma,\frac{1}{2}}}^2
\no{z}_{X^{s,\frac{1}{2}}}.
\nn
}
If $s<1$, we have
\eee{
\sum_{j,k,\ell} 2^{j(a-2)} 2^{k(1-s)} 2^{\ell(1-s)}
\lesssim
\sum_j 2^{j(a-2s)},
\nn
}
which converges for $a<2s$. If $s\geq1$, the sum converges for $a<2$.
\end{proof}
\begin{lemma}\label{error}
Let $s>\frac{1}{2}$. Then, the quantities $\mathcal E_1(z)$ and $\mathcal E_2(z)$ given by \eqref{E-def} and \eqref{E-def2} satisfy 
\eee{\label{error-est}
\no{\mathcal{E}_1(z)+\mathcal{E}_2(z)}_{X^{s+a,-\frac{1}{2}+\delta}}
\lesssim
\no{z}_{Z^{\sigma}}^4
\no{z}_{Z^s}
}
for all $\delta>0$ and $0<a<\min\{s,1\}$.
\end{lemma}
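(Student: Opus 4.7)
The plan is to decompose $\mathcal{E}_1(z)+\mathcal{E}_2(z)$ into the explicit summands appearing in \eqref{E-def} and \eqref{E-def2} (one quintic term from $\mathcal{E}_1$ and seven summands in $\mathcal{E}_2$, of which four are quintic and three are septic after expanding the $\widehat{|z|^2 z}$, $\widehat{|z|^2}$ and $\widehat{z^2}$ convolutions into the corresponding triple or double sums of Fourier coefficients of $z$) and to estimate each by the same Littlewood-Paley/duality/H\"older scheme employed in Lemmas \ref{n11-n13}--\ref{n22-star}. The decisive common structural feature is that in every summand the output factor $\widehat{z}(\xi)$ appears explicitly outside the sum while every summation variable $\xi_\ell$ carries the restriction $|\xi|\lesssim|\xi_\ell|$; this is exactly the point of the way $\mathcal{N}_{2,3}^*(z)$ was reorganized into the resonant piece $\tfrac{i}{16\pi^4}\widehat{z}(\xi)\no{z}_{L^4(\mathbb T)}^4$ plus the error pieces $\mathcal{E}_1$ and $\mathcal{E}_2$.

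Next I would apply a dyadic Littlewood-Paley projection on the output frequency, $|\xi|\sim 2^j$, together with a second dyadic localization $|\xi_\ell|\sim 2^k$ on (any one of) the high-frequency summation variables, where necessarily $k\geq j$. Testing against $\varphi$ of unit norm in $X^{-s-a,\frac{1}{2}-\delta}$ and invoking Plancherel, each summand reduces to a space-time integral of the schematic form
\begin{equation*}
\int_{\mathbb T\times\mathbb R} (z_{\sim j})^{\#}\,(z_{\sim k})^{\#}\,(z^{\#})^{m}\,(\varphi_j)^{\#}\,dx\,dt,\qquad m\in\{2,4\},
\end{equation*}
with $m=2$ for the quintic summands and $m=4$ for the septic ones. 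The frequency gap $k\geq j$ supplies the key gain: the output loss $2^{j(s+a)}$ can be rewritten as $2^{ja}\,2^{ks}\,2^{-(k-j)s}$, so it is absorbed into the high-frequency factor at the cost of a summable tail $2^{-(k-j)s}$.

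To close the estimate I would place $(z_{\sim j})^{\#}$ and $(\varphi_j)^{\#}$ in $L^4_{t,x}$ using Bourgain's estimate \eqref{l4-est}, place $(z_{\sim k})^{\#}$ in $L^\infty_{t,x}$ via the embedding $Y^{\frac{1}{2}+\delta,0}\hookrightarrow L^\infty_{t,x}$ (valid since $\sigma>\tfrac{1}{2}$), and dispatch the remaining $(z^{\#})^{m}$ factors by the same Sobolev embedding. The resulting dyadic sum is of the schematic form $\sum_{j,\,k\geq j} 2^{j(a-\sigma)}\,2^{-(k-j)s}\lesssim \sum_{j} 2^{j(a-\sigma)}$, which converges precisely when $0<a<\sigma=\min\{s,1\}$, yielding \eqref{error-est}.

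The main obstacle is the bookkeeping: the seven summands in \eqref{E-def2} carry slightly different convolution structures (some encode a quartic interaction $\widehat{|z|^2}(\xi_1-\xi_2)$, others a quadratic one $\widehat{z^2}(\xi_1+\xi_2)$, and others a sextic-type pairing $\widehat{|z|^2 z}(\xi_1)$), so for each term I must verify that the outer restriction $|\xi|\lesssim|\xi_\ell|$ propagates into the expanded sum, producing at least one summation variable of size $\gtrsim|\xi|$ that can absorb the derivative loss. Once this is checked term by term, the Plancherel/duality step always yields a space-time integral of the schematic shape above and the uniform H\"older/Bourgain/Sobolev pipeline produces the same summability window $0<a<\min\{s,1\}$, completing the proof.
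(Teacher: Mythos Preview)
Your strategy is correct and matches the paper's: the decisive point is that every summand carries the explicit factor $\widehat z(\xi)$ together with at least one internal frequency $\xi_\ell$ satisfying $|\xi_\ell|\gtrsim|\xi|$, and this high-frequency factor absorbs the $\langle\xi\rangle^{s+a}$ loss. The paper in fact only treats $\mathcal E_1$ explicitly and declares $\mathcal E_2$ to have the same structure, so your term-by-term plan is if anything more thorough. That said, there are two concrete errors in your execution.

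First, the H\"older allocation you describe does not close. Placing exactly two factors in $L^4_{t,x}$ and all remaining factors in $L^\infty_{t,x}$ bounds the product only in $L^2_{t,x}$, so the space-time integral is not controlled. More seriously, putting the high-frequency factor $(z_{\sim k})^{\#}$ in $L^\infty$ via $Y^{\frac12+\delta,0}\hookrightarrow L^\infty$ does not absorb the full weight $2^{ks}$: one only has $\|z_{\sim k}\|_{Y^{\frac12+\delta,0}}\lesssim 2^{-k(s-\frac12-\delta)}\|z\|_{Y^{s,0}}$, so a residual factor $2^{k(\frac12+\delta)}$ survives and your claimed dyadic sum $\sum_{j,\,k\ge j}2^{j(a-\sigma)}2^{-(k-j)s}$ is not what this allocation actually produces. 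The fix (and exactly what the paper does) is to place the output factor $z_j$ and the high-frequency factor both in $L^4$ via \eqref{l4-est}, so that the weights $2^{js}$ and $2^{j\sigma}$ are absorbed by $X^{s,\frac12}$ and $X^{\sigma,\frac12}$ respectively, place $(\varphi_j)^{\#}$ in $L^2$, and put the remaining three $z$-factors in $L^\infty$; then the sum is genuinely $\sum_j 2^{j(a-\sigma)}$, and no separate $k$-sum is needed since one simply uses $|\xi_\ell|\gtrsim 2^j$.

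Second, a bookkeeping slip: all eight summands in \eqref{E-def2} are quintic in $z$ (the factors $\widehat{|z|^2 z}$, $\widehat{|z|^2}$, $\widehat{z^2}$ are cubic or quadratic, so each summand has total degree five), hence $m=3$ throughout, not $m\in\{2,4\}$. This only simplifies matters and does not affect the summability window.
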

\begin{proof}
We only show the estimate for $\mathcal{E}_1(z)$ as all of the terms in $\mathcal{E}_2(z)$ possess the same structure necessary to yield the same estimate for $\mathcal{E}_2(z)$. First, by applying Littlewood-Paley-type projections, duality  and Plancherel's theorem, we find
\eee{
\no{\mathcal{E}_1(z)}_{X^{s+a,-\frac{1}{2}+\delta}}
\lesssim
\sum_j 2^{j(s+a)}
\sup_{\no{\varphi}_{X^{0,\frac{1}{2}-\delta}}=1}S^*,
\nn
}
where
\ddd{
S^*
&=
\sum_{\substack{N_{2,1}(\xi) \\ \xi_2+\xi_4= \xi_1+\xi_3 \\ |\xi|\lesssim |\xi_{\ell}|,~\text{for some}~\ell}}
\underset{\tau_1-\tau_2+\tau_3-\tau_4+\tau_5=\tau}{\int}
|\widetilde{z}(\xi_1,\tau_1)|
|\widetilde{z}(\xi_2,\tau_2)|
|\widetilde{z}(\xi_3,\tau_3)|
|\widetilde{z}(\xi_4,\tau_4)|
|\widetilde{z_j}(\xi,\tau_5)|
|\widetilde{\varphi_j}(\xi,\tau)|
~d\sigma.
\nn
}
Without loss of generality, we assume $|\xi_1|\gtrsim |\xi|$. Then, 
\ddd{
S^*
&\lesssim
\sum_{\substack{N_{2,1}(\xi) \\ \xi_2+\xi_4= \xi_1+\xi_3 \\ |\xi|\lesssim |\xi_{\ell}|,~\text{for some}~\ell}}
\underset{\tau_1-\tau_2+\tau_3-\tau_4+\tau_5=\tau}{\int}
|\widetilde{z_{\sim j}}(\xi_1,\tau_1)|
|\widetilde{z}(\xi_2,\tau_2)|
|\widetilde{z}(\xi_3,\tau_3)|
|\widetilde{z}(\xi_4,\tau_4)|
|\widetilde{z_j}(\xi,\tau_5)|
|\widetilde{\varphi_j}(\xi,\tau)|
d\sigma
\nn\\
&\leq
\!
\sum_{\xi_1-\xi_2+\xi_3-\xi_4+\xi_5=\xi}
\underset{\tau_1-\tau_2+\tau_3-\tau_4+\tau_5=\tau}{\int}
\!\!
|\widetilde{z_{\sim j}}(\xi_1,\tau_1)|
|\widetilde{z}(\xi_2,\tau_2)|
|\widetilde{z}(\xi_3,\tau_3)|
|\widetilde{z}(\xi_4,\tau_4)|
|\widetilde{z_j}(\xi_5,\tau_5)|
|\widetilde{\varphi_j}(\xi,\tau)|
d\sigma
\nn\\
&\simeq
\int_{\mathbb{T}\times\mathbb{R}}
(z^{\#})^3 (z_{\sim j})^{\#} (z_j)^{\#}
(\varphi_j)^{\#}
~dxdt.
\nn
}
Consequently,  H\"older's inequality, Sobolev embedding  and estimate \eqref{l4-est} imply
\eee{
\no{\mathcal{E}_1(z)}_{X^{s+a,-\frac{1}{2}+\delta}}
\lesssim
\sum_j 2^{j(a-\sigma)}
\no{z}_{Y^{\frac{1}{2}+\delta,0}}^3
\no{z}_{X^{\sigma,\frac{1}{2}}}
\no{z}_{X^{s,\frac{1}{2}}},
\nn
}
where the above sum is finite for $a<\sigma=\min\{s,1\}$.
\end{proof}
\section{Nonlinear Smoothing: Proof of Theorem \ref{dbp-wp}}\label{proof1}

Having established all necessary nonlinear estimates, we now proceed to the proof of the nonlinear smoothing effect given in  Theorem \ref{dbp-wp}.
We begin by stating some useful linear estimates whose proofs  can be found in \cite{h2006}.
\begin{lemma}\label{lin-id}
Let $s\in\mathbb{R}$ and $z_0 \in H^s(\mathbb{T})$. Then, 
\ddd{\label{lin-id-est}
\no{\eta_T e^{it\partial_x^2} z_0}_{Z^s}
\lesssim
\no{z_0}_{H^s(\mathbb{T})}.
}
\end{lemma}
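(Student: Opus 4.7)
The plan is to compute the spatiotemporal Fourier transform of $\eta_T(t)e^{it\partial_x^2}z_0(x)$ explicitly and then plug it into the two norms defining $Z^s$. Since $e^{it\partial_x^2}e^{i\xi x}=e^{-it\xi^2}e^{i\xi x}$, the spatial Fourier coefficient of $e^{it\partial_x^2}z_0$ at frequency $\xi$ is $e^{-it\xi^2}\widehat{z_0}(\xi)$. Taking the temporal Fourier transform of the product with $\eta_T(t)$ then yields
$$
\widetilde{\eta_T e^{it\partial_x^2}z_0}(\xi,\tau) \;=\; \widehat{z_0}(\xi)\,\widehat{\eta_T}(\tau+\xi^2).
$$
This identity is the workhorse: the frequency shift $\tau\mapsto\tau+\xi^2$ matches exactly the weight $\langle\tau+\xi^2\rangle$ appearing in the Bourgain-type norms, so the $\xi$-sum and the $\tau$-integral decouple.

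For the $X^{s,1/2}$ piece, I would substitute the identity above and change variables $\tau\mapsto\tau-\xi^2$ inside the $\tau$-integral to obtain
$$
\no{\eta_T e^{it\partial_x^2}z_0}_{X^{s,1/2}}^2
\;=\;\sum_{\xi\in\mathbb Z}\langle\xi\rangle^{2s}|\widehat{z_0}(\xi)|^2
\int_{\mathbb R}\langle\tau\rangle\,|\widehat{\eta_T}(\tau)|^2\,d\tau
\;=\;\no{z_0}_{H^s(\mathbb T)}^2\,\big\|\langle\tau\rangle^{1/2}\widehat{\eta_T}(\tau)\big\|_{L^2_\tau}^2.
$$
The remaining scalar factor is finite (indeed bounded by a constant depending on $T$ and on the fixed Schwartz function $\eta$) because $\eta_T\in C_0^\infty$, so $\widehat{\eta_T}$ belongs to the Schwartz class and decays faster than any polynomial.

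For the $Y^{s,0}$ piece, the same substitution gives
$$
\no{\eta_T e^{it\partial_x^2}z_0}_{Y^{s,0}}
\;=\;\Big(\sum_{\xi}\langle\xi\rangle^{2s}|\widehat{z_0}(\xi)|^2\Big)^{1/2}\,\big\|\widehat{\eta_T}\big\|_{L^1_\tau}
\;=\;\no{z_0}_{H^s(\mathbb T)}\,\big\|\widehat{\eta}\big\|_{L^1},
$$
where the last equality follows from the scaling $\widehat{\eta_T}(\tau)=T\widehat{\eta}(T\tau)$ and a change of variable. Summing the two bounds gives the desired estimate on $\no{\cdot}_{Z^s}$. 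There is no genuine obstacle here: the only minor care is to note that the decoupling of $\xi$ and $\tau$ is built into the definition of the $X^{s,b}$ and $Y^{s,b}$ norms precisely so that linear free-evolution data behave like their initial profile in $H^s$, with the temporal cutoff contributing an at-worst $T$-dependent multiplicative constant absorbed into the implicit constant.
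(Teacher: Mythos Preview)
Your argument is correct. The paper does not supply its own proof of this lemma; it states it as one of a few ``useful linear estimates whose proofs can be found in \cite{h2006}'' and moves on. Your computation is the standard one and is essentially what appears in Herr's paper: the key observation is exactly the identity $\widetilde{\eta_T e^{it\partial_x^2}z_0}(\xi,\tau)=\widehat{z_0}(\xi)\,\widehat{\eta_T}(\tau+\xi^2)$, after which the $\xi$- and $\tau$-variables decouple and each of the $X^{s,1/2}$ and $Y^{s,0}$ norms factors as $\no{z_0}_{H^s}$ times a finite quantity depending only on $\eta$ and $T$.
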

\begin{lemma}\label{inhomogeneous}
Let $s\in\mathbb{R}$ and $F\in Y^{s,-1}\cap X^{s,-\frac{1}{2}}$. Then,
\eee{\label{inhomogeneous-est}
\no{
\eta_T 
\int_0^t 
e^{i(t-t')\partial_x^2}
F(t')~dt'
}_{Z^s}
\lesssim
\no{F}_{Y^{s,-1}}+\no{F}_{X^{s,-\frac{1}{2}}}.
}
\end{lemma}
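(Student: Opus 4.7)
\textbf{Proof plan for Lemma \ref{inhomogeneous}.} I would follow the classical Fourier-analytic approach for Duhamel estimates in Bourgain-type spaces. Let $v(x,t) := \int_0^t e^{i(t-t')\p_x^2}F(t')\,dt'$. Taking the spatial Fourier transform and passing to the interaction representation yields
\eee{\nn
e^{it\xi^2}\widehat{v}(\xi,t) = \frac{1}{\sqrt{2\pi}}\int_{\mathbb{R}}\frac{e^{it(\tau+\xi^2)}-1}{i(\tau+\xi^2)}\,\widetilde{F}(\xi,\tau)\,d\tau.
}
Setting $\sigma := \tau+\xi^2$, the entire question reduces to controlling the multiplier $K_T(t,\sigma) := \eta_T(t)\bigl(e^{it\sigma}-1\bigr)/(i\sigma)$ in a manner compatible with the two norms $X^{s,\frac{1}{2}}$ and $Y^{s,0}$ comprising $Z^s$.

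The key decomposition is a smooth dyadic split in $\sigma$ at the threshold $|\sigma|\sim 1/T$. In the \emph{high-modulation region} $|\sigma|T\gtrsim 1$, I would write $K_T(t,\sigma) = \eta_T(t)e^{it\sigma}/(i\sigma) - \eta_T(t)/(i\sigma)$; each summand is the tensor product of a function of $t$ with a function of $\sigma$, and taking the $t$-Fourier transform produces multipliers of the form $\widehat{\eta_T}(\tilde\tau-\sigma)/(i\sigma)$ or $\widehat{\eta_T}(\tilde\tau)/(i\sigma)$. The rapid decay of $\widehat{\eta_T}$ in its argument, combined with Cauchy-Schwarz in $\sigma$ and Plancherel in $\tilde\tau$, converts $\langle\tilde\tau+\xi^2\rangle^{1/2}\cdot 1/\sigma$ into the factor $\langle\sigma\rangle^{-1/2}$ against $\widetilde F$, giving the $X^{s,\frac{1}{2}}$ bound in terms of $\no{F}_{X^{s,-\frac{1}{2}}}$. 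The corresponding $Y^{s,0}$ bound in this regime follows by using $\widehat{\eta_T}\in L^1_{\tilde\tau}$ instead of Cauchy-Schwarz.

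In the \emph{low-modulation region} $|\sigma|T\lesssim 1$, I would Taylor expand
\eee{\nn
\frac{e^{it\sigma}-1}{i\sigma} = \sum_{n=0}^{\infty}\frac{(it)^{n+1}\sigma^n}{(n+1)!},
}
so that $K_T(t,\sigma) = \sum_{n\geqslant 0}\eta_T(t)t^{n+1}\sigma^n/(n+1)!$. Since $\abs{t^{n+1}\eta_T(t)}\lesssim T^{n+1}$ on $\supp \eta_T$ and $|\sigma|^n\lesssim T^{-n}$ in this region, the series converges rapidly. Here the $Y^{s,-1}$ norm of $F$ plays an essential role: in the domain $|\sigma|\lesssim 1/T$ one has $\langle\sigma\rangle^{-1}\sim 1$, so distributing the $T^{n+1}$ factor across this region and summing a geometric-type series produces a bound by $T\,\no{F}_{Y^{s,-1}}$. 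The $\ell^2_\xi L^1_\tau$ structure of $Y$ is used here to handle the $\tau$-integration cleanly; if one tried to use $X^{s,-\frac{1}{2}}$ alone, the endpoint $b=1/2$ would leak a logarithm.

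The principal technical obstacle is precisely this endpoint issue at $b=\tfrac12$: the estimate $\no{\int_0^t e^{i(t-t')\p_x^2}F(t')dt'}_{X^{s,b}}\lesssim \no{F}_{X^{s,b-1}}$ is a clean consequence of the Fourier manipulation above for $b>\tfrac12$, but fails logarithmically at $b=\tfrac12$. The companion space $Y^{s,-1}$ is introduced to absorb this logarithmic obstruction in the low-modulation regime, so the final step is to verify that these two regimes combine to yield exactly the quasi-norm $\no{F}_{Y^{s,-1}}+\no{F}_{X^{s,-\frac{1}{2}}}$ claimed on the right-hand side of \eqref{inhomogeneous-est}.
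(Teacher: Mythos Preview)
Your outline is correct and follows the standard Fourier-analytic argument for endpoint Duhamel estimates in $X^{s,b}$-type spaces with the $Y^{s,b}$ companion norm. Note, however, that the paper does not supply its own proof of this lemma: it is stated among ``useful linear estimates whose proofs can be found in \cite{h2006}'' and simply cited. The argument you sketch --- representing the Duhamel integral via the kernel $(e^{it\sigma}-1)/(i\sigma)$, splitting into the regimes $|\sigma|T\gtrsim 1$ and $|\sigma|T\lesssim 1$, handling the former by decomposing the numerator and exploiting the rapid decay of $\widehat{\eta_T}$, and handling the latter by Taylor expansion with the $Y^{s,-1}$ norm absorbing the would-be logarithmic loss at $b=\tfrac12$ --- is precisely the approach underlying the cited result, so there is no substantive methodological difference to report.
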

\begin{lemma}\label{embeddings}
Let $s\in\mathbb{R}$. Then, 
\ddd{\label{embeddings-est1}
\no{z}_{C(\mathbb{R};H^s(\mathbb{T}))}
\lesssim
\no{z}_{Z^s}.
}
Moreover, for $b_2 > b_1+\frac{1}{2}$,
\ddd{\label{embeddings-est2}
\no{z}_{Y^{s,b_1}}
\lesssim
\no{z}_{X^{s,b_2}}.
}
In particular, for all $b>\frac{1}{2}$, $X^{s,b}\hookrightarrow Z^s$.
\end{lemma}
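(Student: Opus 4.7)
The plan is to reduce all three claims to one-variable Fourier calculus by passing to the interaction representation. Setting $g(x,t) := e^{-it\partial_x^2}z(x,t)$, a direct computation with the Fourier transform conventions fixed in the notation section yields $\widetilde{g}(\xi,\tau) = \widetilde{z}(\xi, \tau - \xi^2)$. Consequently, the change of variables $\tau \mapsto \tau - \xi^2$ inside the $L^2_\tau$ and $L^1_\tau$ integrals converts the weight $\langle \tau + \xi^2 \rangle$ appearing in the definitions of $X^{s,b}$ and $Y^{s,b}$ into the plain weight $\langle \tau \rangle$ applied to $\widetilde{g}$:
\eee{\nn
\no{z}_{X^{s,b}} = \no{\langle \xi \rangle^s \langle \tau \rangle^b\, \widetilde{g}(\xi,\tau)}_{\ell^2_\xi L^2_\tau}, \qquad \no{z}_{Y^{s,b}} = \no{\langle \xi \rangle^s \langle \tau \rangle^b\, \widetilde{g}(\xi,\tau)}_{\ell^2_\xi L^1_\tau}.
}
With this identification, the three claims become essentially bookkeeping, and I would prove them in the order \eqref{embeddings-est2}, \eqref{embeddings-est1}, and finally the embedding $X^{s,b}\hookrightarrow Z^s$.

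For \eqref{embeddings-est2}, I would fix $\xi$, split $\langle\tau\rangle^{b_1} = \langle\tau\rangle^{b_1 - b_2}\langle\tau\rangle^{b_2}$, and apply the Cauchy--Schwarz inequality in $\tau$:
\eee{\nn
\no{\langle \tau \rangle^{b_1} \widetilde{g}(\xi,\tau)}_{L^1_\tau}
\leq
\no{\langle \tau \rangle^{b_1-b_2}}_{L^2_\tau}\, \no{\langle \tau \rangle^{b_2} \widetilde{g}(\xi,\tau)}_{L^2_\tau}.
}
The first factor is finite precisely because $2(b_2-b_1)>1$, which is the hypothesis $b_2>b_1+1/2$. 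Squaring, weighting by $\langle \xi \rangle^{2s}$, and summing in $\xi$ then gives $\no{z}_{Y^{s,b_1}}\lesssim \no{z}_{X^{s,b_2}}$.

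For \eqref{embeddings-est1}, the key point is that $e^{it\partial_x^2}$ is an isometry of $H^s(\mathbb T)$, so $\no{z(t)}_{H^s(\mathbb T)} = \no{g(t)}_{H^s(\mathbb T)}$ for every $t$. Fourier inversion in $\tau$ supplies the pointwise bound $|\widehat{g}(\xi,t)| \leq (2\pi)^{-1/2}\no{\widetilde{g}(\xi,\cdot)}_{L^1_\tau}$; squaring, weighting by $\langle \xi \rangle^{2s}$, and summing in $\xi$ produces $\sup_{t\in\mathbb R}\no{g(t)}_{H^s(\mathbb T)}^2 \lesssim \no{z}_{Y^{s,0}}^2 \leq \no{z}_{Z^s}^2$. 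Continuity of the map $t \mapsto g(t)$ into $H^s(\mathbb T)$ then follows from dominated convergence: each mode $\widehat{g}(\xi,\cdot)$ is continuous (by $\widetilde{g}(\xi,\cdot)\in L^1_\tau$), and the summable majorant $4\langle\xi\rangle^{2s}\no{\widetilde{g}(\xi,\cdot)}_{L^1_\tau}^2$ controls the $H^s$-tails uniformly in $t$.

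Finally, for the embedding $X^{s,b}\hookrightarrow Z^s$ with $b>1/2$, I would apply \eqref{embeddings-est2} with $b_1=0$ and $b_2=b$ to control the $Y^{s,0}$ piece of the $Z^s$ norm, while $\no{z}_{X^{s,1/2}}\leq \no{z}_{X^{s,b}}$ is immediate from the pointwise monotonicity $\langle\tau+\xi^2\rangle^{1/2}\leq \langle\tau+\xi^2\rangle^b$. No substantive obstacle is expected here; the entire content of the lemma is the observation that, after removing the linear Schr\"odinger propagator, $X^{s,b}$ and $Y^{s,b}$ are nothing more than weighted $L^2_\tau$ and $L^1_\tau$ norms of the profile $\widetilde{g}$, at which point each claim is a one-line Fourier-analytic fact.
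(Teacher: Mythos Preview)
Your argument is correct and is the standard route to these embeddings. Note, however, that the paper does not supply its own proof of this lemma: it is listed among the ``useful linear estimates whose proofs can be found in \cite{h2006}'' and is simply quoted from Herr's work. So there is no proof in the paper to compare against; what you have written is precisely the elementary Fourier-analytic justification one would expect, and it would serve perfectly well as a self-contained proof were one to be included.
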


Combining Lemmas \ref{norm-form-1}-\ref{error} with Lemmas \ref{lin-id}-\ref{embeddings}, we  deduce that any solution $z\in Z^s_T$, $s>\frac{1}{2}+\varepsilon$, $T>0$, of the Duhamel equation \eqref{duhamel-z} on $[0,T]$ with  $z_0\in H^s(\mathbb{T})$ enjoys the nonlinear smoothing effect $z-e^{it\partial_x^2}z_0\in C([0,T];H^{s+a}(\mathbb{T}))$ with the estimate \eqref{smoothing-est}. Next, we transition to the solution of the Cauchy problem \eqref{gauge-ivp}.

Let $z_0\in H^s(\mathbb{T})$ with $s>\frac{1}{2}+\varepsilon$ and take $z_0^{(n)}\in H^{\infty}(\mathbb{T})$ such that $z_0^{(n)}\rightarrow z_0$ in $H^s(\mathbb{T})$. From Theorem \ref{herr-wp}, there exist $T=T(\no{z_0}_{H^{1/2+\varepsilon}(\mathbb{T})})>0$ and functions $z\in Z^{1/2+\varepsilon}_T$ and, for $n$ sufficiently large,  $z^{(n)}\in Z^{\infty}_T$ that are solutions of the Cauchy problem \eqref{gauge-ivp} with initial data $z_0$ and $z_0^{(n)}$, respectively.
Furthermore, from the computations of Section \ref{s-dbp}, the smooth solution $z^{(n)}$ satisfies the Duhamel equation \eqref{duhamel-z} on $[0,T]$. In addition, thanks to the Lipschitz continuity of the data-to-solution map, we have $z^{(n)}\rightarrow z$ in $Z^{1/2+\varepsilon}_T$. Therefore, using Lemmas \ref{norm-form-1}-\ref{error} and the fact that $z^{(n)}$ is Cauchy in $Z^{1/2+\varepsilon}_T$, we  conclude that $z\in Z^{1/2+\varepsilon}_T$ satisfies \eqref{duhamel-z} on $[0, T]$.
In turn, as noted below Lemma \ref{embeddings}, we can employ the nonlinear smoothing estimate \eqref{smoothing-est} for an appropriate value of $a$ to infer
\ddd{
\no{z}_{Z^{\frac{1}{2}+\varepsilon+a}_T}
&\lesssim
\no{z_0}_{H^{\frac{1}{2}+\varepsilon+a}(\mathbb{T})}
+
\big\|z-e^{it\partial_x^2}z_0\big\|_{Z^{\frac{1}{2}+\varepsilon+a}_T}
\nn\\
&\lesssim
\no{z_0}_{H^{\frac{1}{2}+\varepsilon+a}(\mathbb{T})}
+
C(\no{z}_{Z^{\sigma}_T}, T) \no{z}_{Z^{\frac{1}{2}+\varepsilon}_T}
\nn\\
&\lesssim
\no{z_0}_{H^{\frac{1}{2}+\varepsilon+a}(\mathbb{T})}
+
C(\no{z_0}_{H^{\sigma}(\mathbb{T})}, T) \no{z_0}_{H^{\frac{1}{2}+\varepsilon}(\mathbb{T})}
\nn\\
&\lesssim
C(\no{z_0}_{H^{\sigma}(\mathbb{T})}, T)\no{z_0}_{H^{\frac{1}{2}+\varepsilon+a}(\mathbb{T})},
\label{z-d-est}
} 
which shows that $z\in Z^{\frac{1}{2}+\varepsilon+a}_T$ with the estimate \eqref{z-d-est}.
Iterating this process, we eventually obtain 
\eee{\nn
\no{z}_{Z^s_T}\leq C(s, \no{z_0}_{H^{\sigma}(\mathbb{T})}, T)\no{z_0}_{H^s(\mathbb{T})}.
}
This estimate shows that   $z\in Z^{1/2+\varepsilon}_T$ actually belongs to $Z^s_T$. Therefore, since $z$ satisfies the Duhamel equation \eqref{duhamel-z}, it admits the nonlinear smoothing estimate   \eqref{smoothing-est}, completing the proof of Theorem \ref{dbp-wp}.

\section{Polynomial Bound: Proof of Theorem \ref{polynomial-bound}}\label{proof2}

We shall now exploit the nonlinear smoothing effect of Theorem \ref{dbp-wp} in order to establish the polynomial bound of Theorem \ref{polynomial-bound}.
We begin by proving such a bound for the solution $z$ of the gauged Cauchy problem \eqref{gauge-ivp}. 

First, we suppose that $1\leq s \leq \frac{3}{2}-\varepsilon$ for $\varepsilon$ as in Theorem \ref{dbp-wp}. 
Fix $n\in\mathbb{N}$ and $t\in [nT, (n+1)T]$, where $T=T(\no{z_0}_{H^1(\mathbb{T})})$ is the local time of existence from Theorem \ref{herr-wp}. Then, write $z$ in the form
$$
z(t)
=
Q_{\leq n^2}z(t)
+
Q_{> n^2}z(t),
$$
where $\widehat{Q_{\leq N} z}(\xi)=\chi_{|\xi|\leq N}\widehat{z}(\xi)$ and $Q_{> N} z$ is defined similarly. 
The term $Q_{\leq n^2} z(t)$ satisfies
\eee{
\no{Q_{\leq n^2} z(t)}_{H^s(\mathbb{T})}
\leq
c\, \langle{n\rangle}^{2(s-1)}
\no{z(t)}_{H^1(\mathbb{T})}
\leq
C(\no{z_0}_{H^1(\mathbb{T})}, T) \, 
\langle{t\rangle}^{2(s-1)}.
\label{poly1}
}
The term $Q_{>n^2} z(t)$ can be handled by taking advantage of the nonlinear smoothing effect \eqref{smoothing-est}. Indeed,  
\eee{\label{n2-split}
Q_{> n^2} z(t)
=
Q_{> n^2} \big(z(t)- e^{i (t-nT)\partial_x^2} z(nT)\big)+Q_{> n^2} e^{i (t-nT)\partial_x^2} z(nT).
}
Thus,  since $t\in [nT, (n+1)T]$ and $s\leq\frac{3}{2}-\varepsilon$, we have
\ddd{
\no{Q_{> n^2} \big(z(t)- e^{i (t-nT)\partial_x^2} z(nT)\big)}_{H^s(\mathbb{T})}
&=
\Big\|J_x^{s-(\frac{3}{2}-\varepsilon)} J^{\frac{3}{2}-\varepsilon}_x Q_{> n^2} \big(z(t)- e^{i (t-nT)\partial_x^2} z(nT)\big)\Big\|_{L^2(\mathbb{T})}
\nn\\
&\leq
c\, \langle{n\rangle}^{2(s-(\frac{3}{2}-\varepsilon))} \no{Q_{> n^2} \big(z(t)- e^{i (t-nT)\partial_x^2} z(nT)\big)}_{H^{\frac{3}{2}-\varepsilon}(\mathbb{T})}
\nn\\
&\leq
\langle{n\rangle}^{2(s-(\frac{3}{2}-\varepsilon))}
C(s, \no{z(nT)}_{H^1(\mathbb{T})}, T) \no{z(nT)}_{H^1(\mathbb{T})}
\nn\\
&\leq
\langle{n\rangle}^{2(s-(\frac{3}{2}-\varepsilon))}
\tilde{C}(s, \no{z_0}_{H^1(\mathbb{T})}, T).
\label{pb-comp}
}
In order to estimate $Q_{>n^2} e^{i(t-nT)\partial_x^2} z(nT)$ in \eqref{n2-split}, we first use the strict inequality
\eee{\label{pb-comp2}
\no{Q_{>n^2} e^{i(t-nT)\partial_x^2} z(nT)}_{H^s(\mathbb{T})}
\leq
\no{Q_{>(n-1)^2} z(nT)}_{H^s(\mathbb{T})}.
}
Then, writing
$$
Q_{>(n-1)^2} z(nT)
=
Q_{>(n-1)^2}\big(z(nT)- e^{i(nT-(n-1)T)\partial_x^2} z((n-1)T)\big)+Q_{>(n-1)^2} e^{iT\partial_x^2} z((n-1)T)
$$
and proceeding similarly to \eqref{pb-comp} and \eqref{pb-comp2}, we obtain
\ddd{
\no{Q_{>(n-1)^2} z(nT)}_{H^s(\mathbb{T})}
&\leq
\langle{n-1\rangle}^{2(s-(\frac{3}{2}-\varepsilon))}
\tilde{C}(s, \no{z_0}_{H^1(\mathbb{T})}, T)
+
\no{Q_{>(n-2)^2} z((n-1)T)}_{H^s(\mathbb{T})}.
\nn
}

We may inductively continue this process to arrive at 
\ddd{
\no{Q_{> n^2} z(t)}_{H^s(\mathbb{T})}
&\leq
\sum_{k=1}^n \langle{k\rangle}^{2(s-(\frac{3}{2}-\varepsilon))} \tilde{C}(s, \no{z_0}_{H^1(\mathbb{T})}, T)+\no{z_0}_{H^s(\mathbb{T})}.
\nn
}
Then, noting that 
\eee{
\sum_{k=1}^n k^{\alpha}
\leq
c_{\alpha}
n^{\alpha+1}, \quad \alpha> -1, 
\nn
}
and observing that  $2(s-(\frac{3}{2}-\varepsilon))> -1$ since $s\geq 1$, we obtain
\ddd{
\no{Q_{> n^2} z(t)}_{H^s(\mathbb{T})}
&\leq
\sum_{k=1}^n \langle{k\rangle}^{2(s-(\frac{3}{2}-\varepsilon))} \tilde{C}(s, \no{z_0}_{H^1(\mathbb{T})}, T)+\no{z_0}_{H^s(\mathbb{T})}
\nn\\
&\leq
\langle{n\rangle}^{2(s-1+\varepsilon)} \widetilde{C}(s, \no{z_0}_{H^1(\mathbb{T})}, T)+\no{z_0}_{H^s(\mathbb{T})}
\nn\\
&\leq
\langle{t\rangle}^{2(s-1+\varepsilon)} \widetilde{\widetilde{C}}(\varepsilon, s, \no{z_0}_{H^s(\mathbb{T})}, T).
\label{poly2}
}
Combining \eqref{poly1} and \eqref{poly2} yields
\eee{\label{z-bound}
\no{z(t)}_{H^s(\mathbb{T})}
\leq
\langle{t\rangle}^{2(s-1+\varepsilon)}
C(\varepsilon, s, \no{z_0}_{H^s(\mathbb{T})}, T), \quad 1\leq s\leq \frac{3}{2}-\varepsilon.
}

Next, we consider the range $\frac{3}{2}-\varepsilon\leq s \leq 2-2\varepsilon$. We remark that the argument outlined for this range also extends to the range $s> 2-2\varepsilon$. Once again, let $t\in[nT, (n+1)T]$ and split $z(t)$ as before. For $Q_{\leq n^2} z(t)$, we still have estimate \eqref{poly1}. Also, as before, we  write
\eee{
Q_{> n^2} z(t)
=
Q_{> n^2} \big(z(t) - e^{i(t-nT)\partial_x^2} z(nT)\big) + Q_{> n^2} e^{i(t-nT)\partial_x^2} z(nT).
\nn
}
Then,
\ddd{
\no{Q_{> n^2} \big(z(t) - e^{i(t-nT)\partial_x^2} z(nT)\big)}_{H^s(\mathbb{T})}
&=
\no{J^{s-(2-2\varepsilon)}_x J^{2-2\varepsilon}_x Q_{> n^2} \big(z(t) - e^{i(t-nT)\partial_x^2} z(nT)\big)}_{L^2(\mathbb{T})}
\nn\\
&\leq
c\, \langle{n\rangle}^{2(s-(2-2\varepsilon))} 
\no{Q_{> n^2} \big(z(t) - e^{i(t-nT)\partial_x^2} z(nT)\big)}_{H^{2-2\varepsilon}(\mathbb{T})}
\nn\\
&\leq
\langle{n\rangle}^{2(s-(2-2\varepsilon))} 
C(s, \no{z(nT)}_{H^1(\mathbb{T})}, T)
\no{z(nT)}_{H^{\frac{3}{2}-\varepsilon}(\mathbb{T})}
\nn\\
&\leq
\langle{n\rangle}^{2(s-(2-2\varepsilon))} 
\tilde{C}(s, \no{z_0}_{H^1(\mathbb{T})}, T)
\no{z(nT)}_{H^{\frac{3}{2}-\varepsilon}(\mathbb{T})}.
\nn
}
In addition, the  bound \eqref{z-bound}   gives
\eee{
\no{z(nT)}_{H^{\frac{3}{2}-\varepsilon}(\mathbb{T})}
\leq
\langle{nT\rangle} \, 
C(\no{z_0}_{H^{\frac{3}{2}-\varepsilon}(\mathbb{T})}, T).
\nn
}
Consequently,
\eee{
\no{Q_{> n^2} \big(z(t) - e^{i(t-nT)\partial_x^2} z(nT)\big)}_{H^s(\mathbb{T})}
\leq
\langle{n\rangle}^{2(s-(2-2\varepsilon))+1} \tilde{C}(\varepsilon, s, \no{z_0}_{H^{\frac{3}{2}-\varepsilon}(\mathbb{T})}, T).
\nn
}
Repeating the procedure as before yields
\ddd{
\no{Q_{> n^2}z(t)}_{H^s(\mathbb{T})}
&\leq
\sum_{k=1}^n \langle{k\rangle}^{2(s-(2-2\varepsilon))+1} \tilde{C}(\varepsilon, s,\no{z_0}_{H^{\frac{3}{2}-\varepsilon}(\mathbb{T})}, T) + \no{z_0}_{H^s(\mathbb{T})}
\nn\\
&\leq
\langle{n\rangle}^{2(s-(2-2\varepsilon))+2} \tilde{C}(\varepsilon, s, \no{z_0}_{H^s(\mathbb{T})}, T) + \no{z_0}_{H^s(\mathbb{T})}
\nn\\
&\leq
\langle{t\rangle}^{2(s-(1-2\varepsilon))} \tilde{\tilde{C}}(\varepsilon, s, \no{z_0}_{H^s(\mathbb{T})}, T).
\nn
}

Overall, recalling also \eqref{z-bound}, we have established the bound
\eee{\label{z-bound2}
\no{z(t)}_{H^s(\mathbb{T})}
\leq
\langle{t\rangle}^{2(s-(1-2\varepsilon))} C(\varepsilon, s, \no{z_0}_{H^s(\mathbb{T})}, T), \quad 1\leq s\leq 2-2\varepsilon.
\nn
}
We may then repeat the above procedure to establish the bound \eqref{z-bound2} for all $s\geq 1$.

\vskip 3mm

In order to extend the result to the solution $u$ of the dNLS Cauchy problem \eqref{cauchy-dnls}, we begin by establishing the following product estimate.
\begin{proposition}\label{sobolev-product}
Let $s_1, s_2\geq s\geq 0$ and $s_1+s_2>s+\frac{1}{2}$. Then 
\ddd{\label{sobolev-product-est}
\no{fg}_{H^s(\mathbb{T})}
&\lesssim
\no{f}_{H^{s_1}(\mathbb{T})}
\no{g}_{H^{s_2}(\mathbb{T})}.
}
\end{proposition}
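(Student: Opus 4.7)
The plan is a direct Fourier-space argument based on the convolution formula
$$
\widehat{fg}(\xi)
= \frac{1}{\sqrt{2\pi}} \sum_{\xi_1+\xi_2=\xi} \widehat{f}(\xi_1)\,\widehat{g}(\xi_2), \qquad \xi\in\mathbb Z,
$$
combined with Cauchy--Schwarz. I would begin by introducing the weighted coefficients $a(\xi):=\langle\xi\rangle^{s_1}|\widehat{f}(\xi)|$ and $b(\xi):=\langle\xi\rangle^{s_2}|\widehat{g}(\xi)|$, so that $\|a\|_{\ell^2_\xi}=\|f\|_{H^{s_1}}$ and $\|b\|_{\ell^2_\xi}=\|g\|_{H^{s_2}}$. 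The factor $\langle\xi\rangle^s$ is absorbed into the multilinear symbol, yielding
$$
\langle\xi\rangle^s |\widehat{fg}(\xi)| \lesssim
\sum_{\xi_1+\xi_2=\xi} \frac{\langle\xi\rangle^s}{\langle\xi_1\rangle^{s_1}\langle\xi_2\rangle^{s_2}}\, a(\xi_1)\, b(\xi_2).
$$

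Applying Cauchy--Schwarz in $\xi_1$, squaring, and summing over $\xi\in\mathbb Z$ reduces the whole estimate to showing that
$$
K(\xi) := \sum_{\xi_1+\xi_2=\xi} \frac{\langle\xi\rangle^{2s}}{\langle\xi_1\rangle^{2s_1}\langle\xi_2\rangle^{2s_2}}
$$
is bounded uniformly in $\xi$, because the remaining term $\sum_\xi \sum_{\xi_1+\xi_2=\xi} a(\xi_1)^2 b(\xi_2)^2$ factors as $\|a\|_{\ell^2}^2\|b\|_{\ell^2}^2$. Using the triangle inequality $\langle\xi\rangle\lesssim\langle\xi_1\rangle+\langle\xi_2\rangle$, I split the $\xi_1$-summation into three regions: (a) $|\xi_1|\leq|\xi|/2$, where $\langle\xi_2\rangle\sim\langle\xi\rangle$; (b) the symmetric region $|\xi_2|\leq|\xi|/2$; and (c) $|\xi_1|,|\xi_2|>|\xi|/2$, where both frequencies exceed $|\xi|/2$.

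In region (a) the contribution to $K(\xi)$ is comparable to $\langle\xi\rangle^{2(s-s_2)}\sum_{|\xi_1|\leq|\xi|/2}\langle\xi_1\rangle^{-2s_1}$, and I  split into three subcases $s_1>1/2$, $s_1=1/2$, $s_1<1/2$; in each case the partial $\xi_1$-sum is bounded respectively by $1$, $\log\langle\xi\rangle$, or $\langle\xi\rangle^{1-2s_1}$, and together with $s_2\geq s$ and the strict hypothesis $s_1+s_2>s+\tfrac12$ this yields a uniform bound. Region (b) is treated symmetrically. In region (c), since $s\leq s_1,s_2$ one has $\langle\xi\rangle^{2s}\leq\langle\xi_1\rangle^{2s}$, so after discarding the $\langle\xi_2\rangle^{2s_2}\geq 1$ factor in a controlled way the sum reduces to $\sum_{|\xi_1|>|\xi|/2}\langle\xi_1\rangle^{2(s-s_1-s_2)}$, which converges and is bounded because $2(s-s_1-s_2)<-1$ by hypothesis.

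The main obstacle is region (a)/(b) at the logarithmic threshold $s_1=\tfrac12$ (or $s_2=\tfrac12$): the $\log\langle\xi\rangle$ arising from $\sum\langle\xi_1\rangle^{-1}$ must be absorbed by a strictly negative power of $\langle\xi\rangle$ coming from $s_2-s>0$, and this is precisely what the strict inequality $s_1+s_2>s+\tfrac12$ provides. The remaining sums over integer lattices are elementary, so once $\sup_\xi K(\xi)<\infty$ is established the proposition follows immediately.
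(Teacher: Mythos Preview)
Your argument is correct and complete, but it follows a different route from the paper's. The paper avoids your case analysis on $s_1$ entirely: it first establishes the pointwise splitting
\[
\langle\xi\rangle^{s}\ \lesssim\ \langle\xi-\eta\rangle^{s_1}\langle\eta\rangle^{s-s_1}+\langle\xi-\eta\rangle^{s-s_2}\langle\eta\rangle^{s_2},
\]
then applies Young's convolution inequality (stated there as Minkowski) to put one factor in $\ell^2$ and the other in $\ell^1$, and finally uses Cauchy--Schwarz on the $\ell^1$ term to produce the single convergent series $\sum_\eta\langle\eta\rangle^{-2(s_1+s_2-s)}$. Your approach instead applies Cauchy--Schwarz immediately to reduce to the uniform kernel bound $\sup_\xi K(\xi)<\infty$, which you then verify by the frequency trichotomy. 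Both are standard; the paper's route is shorter and symmetric in $f,g$ from the outset, while yours makes the role of each hypothesis more visible (in particular, the logarithmic borderline at $s_1=\tfrac12$ is explicit in your argument and hidden in theirs).

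One small wording issue in region~(c): the phrase ``discarding the $\langle\xi_2\rangle^{2s_2}\ge 1$ factor'' is misleading, since literally dropping it from the denominator would leave $\sum\langle\xi_1\rangle^{2(s-s_1)}$, which need not converge. What you actually need (and evidently use, given your stated exponent $2(s-s_1-s_2)$) is that $|\xi_1|,|\xi_2|>|\xi|/2$ together with $\xi_1+\xi_2=\xi$ force $|\xi_1|\sim|\xi_2|$, so that $\langle\xi_2\rangle^{2s_2}\sim\langle\xi_1\rangle^{2s_2}$. State this explicitly.
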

\begin{proof}
The argument is standard and resembles the proof of the algebra property for Sobolev spaces.
For $r, r_1, r_2\geq 0$, we claim that there exists a constant $C>0$ so that
\ddd{\label{sp-est-1}
(1+|x|+|y|)^r\leq C\left[(1+|x|)^{r+r_1} (1+|y|)^{-r_1}+(1+|y|)^{r+r_2} (1+|x|)^{-r_2}\right].
}
Indeed, first note that
\ddd{
(1+|x|+|y|)^r (1+|x|)^{r_2} (1+|y|)^{r_1}
&\leq 
(1+|x|+|y|)^{r+r_1+r_2}.
\nn
}
Thus, to establish \eqref{sp-est-1}, it suffices to show that there exists $C=C(t)>0$ so that
\ddd{\label{sp-est-2}
(1+|x|+|y|)^t
\leq 
C\left[(1+|x|)^t+(1+|y|)^t\right]
}
for all $t\geq 0$. Without loss of generality, assume that $(1+|y|)\leq (1+|x|)$. Then
\eee{
(1+|x|+|y|)^t
\leq
(1+|x|+1+|y|)^t
\leq 
2^t (1+|x|)^t
\leq 
2^t \left[ (1+|x|)^t+(1+|y|)^t\right].
\nn
}
Therefore, \eqref{sp-est-2} holds with $C=2^t$, for all $t\geq 0$. Consequently, we have \eqref{sp-est-1}. \\
\indent 
Next, we apply \eqref{sp-est-1} with $r=s$, $r_1=s_1-s$, and $r_2=s_2-s$ to see that
\ddd{
\langle{\xi\rangle}^s
\leq
(1+|\xi|)^s
\lesssim
(1+|\xi-\eta|)^{s_1} (1+|\eta|)^{s-s_1}
+
(1+|\xi-\eta|)^{s-s_2}(1+|\eta|)^{s_2}.
\nn
}
Thus,
\ddd{
\no{fg}_{H^s(\mathbb{T})}^2
&\leq
\sum_{\xi}
(1+|\xi|)^{2s}
\bigg(\sum_{\eta}
|\widehat{f}(\xi-\eta)|
|\widehat{g}(\eta)|
\bigg)^2
\nn\\
&\lesssim
\sum_{\xi}
\bigg(\sum_{\eta}
(1+|\xi-\eta|)^{s_1}
|\widehat{f}(\xi-\eta)|
(1+|\eta|)^{s-s_1}
|\widehat{g}(\eta)|\bigg)^2
\nn\\
&\hspace{0.5cm}
+
\sum_{\xi}
\bigg(\sum_{\eta}
(1+|\xi-\eta|)^{s-s_2}
|\widehat{f}(\xi-\eta)|
(1+|\eta|)^{s_2}
|\widehat{g}(\eta)|\bigg)^2,
\nn
}
where we have applied \eqref{sp-est-2} for $t=2$ in the previous inequality. It suffices to bound the first sum above, as the second sum is treated the same after a change of variables. We apply Minkowski's inequality to obtain
\ddd{
\sum_{\xi}
\bigg(\sum_{\eta}
(1+|\xi-\eta|)^{s_1}
|\widehat{f}(\xi-\eta)|
(1+|\eta|)^{s-s_1}
|\widehat{g}(\eta)|\bigg)^2
&\leq
\no{f}_{H^{s_1}(\mathbb{T})}^2
\bigg(
\sum_{\eta} (1+|\eta|)^{s-s_1} |\widehat{g}(\eta)|
\bigg)^2.
\nn
}
Finally, due to the Cauchy-Schwarz inequality,
\ddd{
\sum_{\eta} (1+|\eta|)^{s-s_1} |\widehat{g}(\eta)|
&=
\sum_{\eta} (1+|\eta|)^{s-s_1-s_2} (1+|\eta|)^{s_2} |\widehat{g}(\eta)|
\nn\\
&\leq
\no{g}_{H^{s_2}(\mathbb{T})} 
\no{(1+|\eta|)^{-(s_1+s_2-s)}}_{\ell^2_{\eta}},
\nn
}
where the above norm is finite due to the fact that $s_1+s_2-s>\frac{1}{2}$.
\end{proof}
\begin{lemma}\label{u-z}
Let $s\geq 0$. Then, there exists $C>0$ such that
\ddd{\label{u-z-bound}
\no{u(t)}_{H^s(\mathbb{T})}
\leq
C\Big(1+\no{z(t)}^2_{H^{\frac{1}{4}}(\mathbb{T})}\Big)\no{z(t)}_{H^s(\mathbb{T})}.
}
\end{lemma}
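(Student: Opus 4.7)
The plan is first to make the relation between $u$ and $z$ fully explicit, then to reduce the desired inequality to an $H^s$-bound on the gauged product $e^{i\mathcal{I}(z)}z$, and finally to absorb the multiplier $e^{i\mathcal{I}(z)}$ by a product estimate. Chaining the transformations \eqref{v-gauge}, \eqref{w-gauge}, and \eqref{z} gives $u(x,t) = e^{ig(t)}e^{i\mathcal{I}(v)(x,t)}v(x,t)$ with $v(x,t) = e^{ig(t)}z(x+2\mu t,t)$; since $|v|^2 = |z|^2$, a change of variables in the double integral defining $\mathcal{I}$ yields $\mathcal{I}(v)(x,t) = \mathcal{I}(z)(x+2\mu t,t)$. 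Combined with the translation invariance of $H^s(\mathbb{T})$ and $|e^{ig(t)}| = 1$, this gives $\|u(t)\|_{H^s(\mathbb{T})} = \|e^{i\mathcal{I}(z)(t)}\,z(t)\|_{H^s(\mathbb{T})}$, so the task becomes to bound the right-hand side by $C(1+\|z(t)\|_{H^{1/4}(\mathbb{T})}^2)\|z(t)\|_{H^s(\mathbb{T})}$.

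For the multiplier estimate, I would exploit that $\mathcal{I}(z)$ has zero spatial mean with $\partial_x\mathcal{I}(z)=|z|^2-\mu$. This yields $\|\mathcal{I}(z)\|_{L^\infty} \leq \|\partial_x\mathcal{I}(z)\|_{L^1} \lesssim \|z\|_{L^2}^2$ and $\|\partial_x\mathcal{I}(z)\|_{L^2} \leq \|z\|_{L^4}^2 + \mu\sqrt{2\pi} \lesssim \|z\|_{H^{1/4}(\mathbb{T})}^2$, the last step using the Sobolev embedding $H^{1/4}(\mathbb{T})\hookrightarrow L^4(\mathbb{T})$. Using $|e^{i\theta}-1|\leq|\theta|$ for the $L^2$ piece and $\partial_x(e^{i\mathcal{I}(z)}-1) = ie^{i\mathcal{I}(z)}(|z|^2-\mu)$ for the $\dot H^1$ piece, one obtains $\|e^{i\mathcal{I}(z)}-1\|_{H^1(\mathbb{T})} \lesssim \|z\|_{H^{1/4}(\mathbb{T})}^2$. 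For $0\leq s\leq 1$, splitting $e^{i\mathcal{I}(z)}z = z + (e^{i\mathcal{I}(z)}-1)z$ and applying Proposition \ref{sobolev-product} with $(s_1,s_2)=(1,s)$ (both conditions $s_1,s_2\geq s$ and $s_1+s_2>s+\tfrac12$ are satisfied) to the correction $(e^{i\mathcal{I}(z)}-1)z$ yields \eqref{u-z-bound} at once.

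For $s>1$, I would proceed by induction on $\lfloor s\rfloor$. Differentiating in $x$ gives $\partial_x u = e^{i\mathcal{I}(z)}\tilde z$ with $\tilde z := z_x + i(|z|^2-\mu)z$, so $\|u\|_{H^s}\lesssim \|z\|_{L^2}+\|e^{i\mathcal{I}(z)}\tilde z\|_{H^{s-1}}$. Rerunning the multiplier argument---now with $\tilde z$ in place of $z$ in the second factor, but the same $e^{i\mathcal{I}(z)}$, hence the same $H^1$ bound---reduces the task to controlling $\|\tilde z\|_{H^{s-1}}$ by $(1+\|z\|_{H^{1/4}}^2)\|z\|_{H^s}$, the only nontrivial piece being the cubic correction $(|z|^2-\mu)z$. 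The hard part will be to keep the polynomial factor exactly quadratic in $\|z\|_{H^{1/4}}$ throughout the induction, rather than accumulating powers of higher-index Sobolev norms. The key idea is to pair two of the three copies of $z$ in each cubic term in $L^4$, use $\|z\|_{L^4}^2\lesssim \|z\|_{H^{1/4}}^2$, and let the third factor absorb the top-order derivative in $L^2$; the one-dimensional Sobolev embedding $H^1(\mathbb{T})\hookrightarrow L^\infty(\mathbb{T})$ together with the interpolation $\|z\|_{L^\infty}^2 \lesssim \|z\|_{L^2}\|z\|_{H^1}$ then permits the required trade-offs between norms at every step of the iteration, and closes the induction with the advertised factor $(1 + \|z\|_{H^{1/4}}^2)$.
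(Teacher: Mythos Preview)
Your reduction to $\|e^{i\mathcal{I}(z)}z\|_{H^s}$ via translation invariance and unimodularity is equivalent to the paper's reduction to $v$ (the paper simply notes $\|v\|_{H^s}=\|w\|_{H^s}=\|z\|_{H^s}$ and works with $v$). For $0\leq s\leq 1$ your argument is essentially the paper's: both bound the gauge factor in $H^1$ via $\|e^{i\mathcal{I}}\|_{H^1}\lesssim 1+\|\partial_x\mathcal{I}\|_{L^2}\lesssim 1+\|z\|_{H^{1/4}}^2$ and feed this into Proposition~\ref{sobolev-product}. The paper applies the proposition directly to $e^{i\mathcal{I}(v)}v$ with $(s_1,s_2)=(s,\tfrac12+\delta)$ and then bounds the $H^{1/2+\delta}$ norm of the gauge factor by its $H^1$ norm; your choice $(s_1,s_2)=(1,s)$ together with the splitting $e^{i\mathcal{I}}z=z+(e^{i\mathcal{I}}-1)z$ is a cleaner variant of the same idea and in fact covers the slightly larger range $s\leq 1$ directly.

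Your inductive scheme for $s>1$ has no counterpart in the paper's proof, which stops after the single product estimate. As you sketch it, however, the induction does not close with the advertised prefactor $(1+\|z\|_{H^{1/4}}^2)$: each step invokes the multiplier bound at level $s-1$ (contributing one such factor) and then must also estimate the cubic piece $(|z|^2-\mu)z$ of $\tilde z$, which via your $L^4$ pairing costs another factor $\|z\|_{H^{1/4}}^2$; after $\lceil s\rceil$ steps one obtains $(1+\|z\|_{H^{1/4}}^2)^{O(s)}$ rather than a single power, and the interpolation $\|z\|_{L^\infty}^2\lesssim\|z\|_{L^2}\|z\|_{H^1}$ you invoke does not repair this since it introduces $\|z\|_{H^1}$ rather than $\|z\|_{H^{1/4}}$. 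For the lemma's only downstream use---the final step of Section~\ref{proof2}, where $\|z(t)\|_{H^1}$ is globally controlled---any prefactor of the form $C(s,\|z\|_{H^1})$ would serve equally well, so this discrepancy is immaterial in practice.
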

\begin{proof}
Clearly, $\no{z(t)}_{H^s(\mathbb{T})}=\no{w(t)}_{H^s(\mathbb{T})}$, for $w$ given by \eqref{w-gauge} . Furthermore, since $\widehat{w}(\xi,t)=e^{-2i\xi\mu t} \widehat{v}(\xi,t)$, for $v$ given by \eqref{v-gauge}, we have that $\no{w(t)}_{H^s(\mathbb{T})}=\no{v(t)}_{H^s(\mathbb{T})}$. Thus, it suffices to establish \eqref{u-z-bound} for $z(t)$ replaced by $v(t)$. \\ 
\indent 
Recall that $u(x,t)=e^{i\mathcal{I}(v)(x,t)}v(x,t)$. For $\delta>0$, we apply Proposition \ref{sobolev-product} with $s_1=s$ and $s_2=\frac{1}{2}+\delta$ to obtain
\eee{
\no{u(t)}_{H^s(\mathbb{T})}
=
\no{e^{i\mathcal{I}(v)(t)}v(t)}_{H^s(\mathbb{T})}
\leq
C \no{e^{i\mathcal{I}(v)(t)}}_{H^{\frac{1}{2}+\delta}(\mathbb{T})} \no{v(t)}_{H^s(\mathbb{T})}.
\nn
}
Now, observe that
\ddd{
\no{e^{i\mathcal{I}(v)(t)}}_{H^{\frac{1}{2}+\delta}(\mathbb{T})}
&\leq
\no{e^{i\mathcal{I}(v)(t)}}_{H^1(\mathbb{T})}
=
\no{e^{i\mathcal{I}(v)(t)}}_{L^2(\mathbb{T})}
+
\no{\partial_x \mathcal{I}(v)(t)}_{L^2(\mathbb{T})}
\nn\\
&=
\sqrt{2\pi}+\no{ |v(t)|^2 -\mu}_{L^2(\mathbb{T})}
\leq
C\left(1+\no{v(t)}_{L^4(\mathbb{T})}^2\right)
\leq 
C\Big(1+\no{v(t)}^2_{H^{\frac{1}{4}}(\mathbb{T})}\Big),
\nn
}
where the last inequality follows from Sobolev embedding.
\end{proof}
\begin{remark}
It is clear from the proof of Lemma \ref{u-z} that the estimate \eqref{u-z-bound} holds with the roles of $u$ and $z$ switched.
\end{remark}
Returning to the case that $s\geq 1$ and applying Lemma \ref{u-z} and the bound \eqref{z-bound} yields
\ddd{
\no{u(t)}_{H^s(\mathbb{T})}
&\leq
C\Big(1+\no{z(t)}_{H^{\frac{1}{4}}(\mathbb{T})}^2\Big)\no{z(t)}_{H^s(\mathbb{T})}
\nn\\
&\leq
C(\no{z_0}_{H^1(\mathbb{T})})\no{z(t)}_{H^s(\mathbb{T})}
\nn\\
&\leq 
C(\no{z_0}_{H^1(\mathbb{T})}) \, \tilde{C}(\varepsilon, s,\no{z_0}_{H^s(\mathbb{T})}, T) \, \langle{t\rangle}^{2(s-1)+\varepsilon}
\nn\\
&\leq
C(\varepsilon,s,\no{u_0}_{H^s(\mathbb{T})}, T) \, \langle{t\rangle}^{2(s-1)+\varepsilon},
\nn
}
which concludes the proof of Theorem \ref{polynomial-bound}.
%
%

\end{document}